\newtheorem{proposition}{Proposition}[section]
\newtheorem{lemma}[proposition]{Lemma}
\newtheorem{definition}[proposition]{Definition}
\newtheorem{theorem}[proposition]{Theorem}
\newtheorem{corollary}[proposition]{Corollary}
\newtheorem{remark}[proposition]{Remark}
\newtheorem{lemma-definition}[proposition]{Lemma-Definition}
\newtheorem{conclusion}[proposition]{Conclusion}
\newcounter{tmp}
\def\coh{\operatorname{coh}}
\def\lto{\longrightarrow}
\def\D{{\mathcal D}}
\def\H{{\mathcal H}}
\def\K{{\mathcal K}}
\def\cO{{\mathcal O}}
\def\M{{\mathcal M}}
\def\cS{{\mathcal S}}
\def\cQ{{\mathcal Q}}
\def\T{{\mathcal T}}
\def\I{{\mathcal I}}
\def\ZZ{{\mathbb Z}}
\def\bR{{\mathbf R}}
\def\bL{{\mathbf L}}
\def\AA{{\mathbb A}}
\def\NN{{\mathbb N}}
\def\ZZ{{\mathbb Z}}
\def\PP{{\mathbb P}}
\def\Hom{\operatorname{Hom}}
\def\End{\operatorname{End}}
\def\Ad{\operatorname{Ad}}
\def\Ker{\operatorname{Ker}\,}
\def\Cone{\operatorname{Cone}}
\def\Spec{\operatorname{Spec}}
\def\id{{\operatorname{id}}}
\def\kk{{\Bbbk}}
\def\rad{\operatorname{rad}}
\def\op{\circ}
\newcommand{\Ho}{{\H^0}}
\newcommand{\SF}{\dS\!\dF\!\operatorname{--}\!}
\newcommand{\SFf}{\dS\!\dF_{fg}\!\operatorname{--}\!}
\newcommand{\prfdg}{\mathscr{P}\!\mathit{erf}\!\operatorname{--}}
\newcommand{\Ac}{\dA\!\mathit{c}\!\operatorname{--}\!}
\newcommand{\prf}{\operatorname{perf}\!\operatorname{--}}
\def\dA{\mathscr A}
\def\dB{\mathscr B}
\def\dC{\mathscr C}
\def\dD{\mathscr D}
\def\dE{\mathscr E}
\def\dF{\mathscr F}
\def\dM{\mathscr M}
\def\dR{\mathscr R}
\def\dS{\mathscr S}
\def\dX{\mathscr X}
\def\dY{\mathscr Y}
\def\Mod{{\mathscr M}\!\mathit{od}\!\operatorname{--}\!}
\def\mE{\mathsf E}
\def\mG{\mathsf G}
\def\mF{\mathsf F}
\def\mI{\mathsf I}
\def\mJ{\mathsf J}
\def\mK{\mathsf K}
\def\mM{\mathsf M}
\def\mN{\mathsf N}
\def\mP{\mathsf P}
\def\mQ{\mathsf Q}
\def\mT{\mathsf T}
\def\mf{\mathsf f}
\def\mi{\mathsf i}
\def\mj{\mathsf j}
\def\mp{\mathsf{p}}
\def\mt{\mathsf{t}}
\def\mPhi{\mathsf \Phi}
\def\mPsi{\mathsf \Psi}
\def\mPi{\mathsf \Pi}
\def\m0{\mathsf 0}
\def\rY{\mathrm{Y}}
\def\rS{\mathrm{S}}
\def\rT{\mathrm{T}}
\def\rs{\mathrm{s}}
\def\rt{\mathrm{t}}
\def\rv{\mathrm{v}}
\def\rw{\mathrm{w}}
\def\rf{\mathrm{f}}
\def\ri{\mathrm{i}}
\def\rj{\mathrm{j}}
\def\rp{\mathrm{p}}
\def\rPhi{\mathrm \Phi}
\def\rPsi{\mathrm \Psi}
\def\rPi{\mathrm \Pi}
\def\dHom{\mathsf{Hom}}
\def\dEnd{\mathsf{End}}
\def\wt{\widetilde}
\def\can{\operatorname{can}}
\def\Tot{\operatorname{Tot}}
\def\rd{{J}}
\def\rdi{\mJ_{-}}
\def\rde{\mJ_{+}}
\def\gR{R}
\def\gM{M}
\def\gN{N}
\def\gHom{\Hom}
\def\da{d_{\dA}}
\def\db{d_{\dB}}
\def\dc{d_{\dC}}
\def\dr{d_{\dR}}
\def\bone{{\mathbf 1}}
\def\btwo{{\mathbf 2}}
\def\mod{\operatorname{mod}\!}
\def\ev{{\operatorname{ev}}}
\def\Anm{\mathsf{A}}
\def\vrt{\mathtt{V}}
\def\Arr{\mbox{\LARGE{\it a}}}
\def\cX{\mathcal{X}}
\title[]{Twisted tensor product, smooth DG algebras, and noncommutative resolutions of singular curves
}
\author[]{Dmitri Orlov}
\address{ Algebraic Geometry Dept., Steklov Math. Institute RAS,
8 Gubkin str., Moscow 119991, RUSSIA}
\email{orlov@mi-ras.ru}
\date{}
\dedicatory{Dedicated to the blessed memory of Ludwig Dmitrievich Faddeev on the occasion of his 90th birthday}
\keywords{Noncommutative algebraic geometry, derived noncommutative schemes, differential graded algebras,  perfect modules and complexes}
\subjclass[2020]{14A22, 16E45, 16P10, 16E35, 18G80}
\begin{document}

\begin{abstract}
New families of algebras and DG algebras with two simple modules are introduced and described. Using the twisted tensor product operation, we prove that such algebras have finite global dimension, and the resulting DG algebras are smooth.
This description allows us to show that some of these DG algebras determine smooth proper noncommutative curves that provide smooth minimal noncommutative resolutions for singular rational curves.
\end{abstract}

\maketitle

%\tableofcontents

\section*{Introduction}

Noncommutative algebraic varieties not only arise naturally in solving various problems of algebraic geometry, but also provide new opportunities for the study of classical varieties and  establish new connections between algebraic objects and the geometric world.
By a derived noncommutative scheme $ \dX $ over a field $\kk$ we will mean a $\kk$\!--linear differential graded (DG) category which can be described as  the DG category perfect modules $ \prfdg \dR$ over a cohomologically bounded DG $\kk$\!--algebra $\dR,$
i.e. such a DG algebra that has only a finite number of nontrivial cohomology
(see , e.g., \cite{Or16,Or18,Or20}). The differential graded algebra $\dR$ depends on a choice of a classical generator $\mE\in\dX$ of the triangulated category of perfect modules  and it arises as the  DG algebra of endomorphisms
$\dEnd_{\dX} (\mE)$ of the object $\mE$ in the DG category $\dX.$

The most important properties of a DG algebra $\dR$ are those properties that are actually attributes of the DG category $\dX=\prfdg\dR.$ In this case, such properties can be considered and talked about as  properties of derived noncommutative schemes.
In this sense, one can define what it means for  a derived noncommutative scheme to be proper, smooth, or regular (see Definition \ref{propdef}).
Thus, for example, a derived noncommutative scheme $\dX=\prfdg\dR$ over $\kk$ will be called {\sf proper} if
the cohomology algebra
$\bigoplus_{p\in\ZZ}H^p(\dR)$
is finite-dimensional over $\kk.$

Another important concept arising in noncommutative geometry is a notion of  geometric realization of a  derived noncommutative scheme $\dX=\prfdg\dR$  which, by definition, is given by a full embedding of the triangulated category $\prf\dR$ into the triangulated category of perfect complexes $\prf X$ on a usual commutative scheme $X.$ Derived noncommutative schemes which have geometric realizations  will be called {\sf geometric}.
The most natural and interesting examples of geometric noncommutative schemes appear as admissible subcategories $\M \subset \prf X,$
where a scheme $X$ is smooth and projective. In this case, a derived noncommutative scheme arises as the DG subcategory $\dM \subset \prfdg X.$
Indeed, it can be shown that any such DG category $\dM$ has the form $\prfdg\dR,$ where $\dR$ is a cohomologically bounded DG algebra, and, moreover, the resulting derived noncommutative scheme $\dM$ is actually smooth and proper.

The most interesting and important derived noncommutative schemes for us are those that are geometric, proper and smooth at the same time.
For any two derived noncommutative schemes $ \dX $ and $ \dY $
 and any perfect bimodule $\mT$ we can define a new derived noncommutative scheme $\dX \underset{\mT} {\oright} \dY,$ which is called the gluing of these schemes via the bimodule $\mT.$
The noncommutative scheme $ \dX \underset {\mT} {\oright} \dY $ adopts many properties of the schemes $\dX$ and $\dY.$
For example, the smoothness and properness of $ \dX $ and $ \dY $ imply the smoothness and properness of the gluing
   $ \dX \underset {\mT} {\oright} \dY $ via a perfect bimodule $\mT.$
In \cite{Or16} it was proved that if noncommutative schemes $ \dX $ and $ \dY $ arise as admissible subcategories in categories of perfect complexes on smooth projective schemes, then their gluing $ \dX \underset {\mT} {\oright} \dY $ via a perfect bimodule $ \mT $ can be realized in the same way.

Another  natural class of derived noncommutative schemes is the class of such schemes  $\dX=\prfdg\dR,$ for which the DG algebra $\dR$ itself is finite-dimensional
over $\kk.$ In \cite{Or20} it was proved that this property is the property of the DG category $\prfdg\dR$ and for any other classical generator $\mE$ the DG
algebra $\dEnd_{\dX} (\mE)$ is quasi-isomorphic to a finite-dimensional DG algebra. Moreover, in \cite{Or19,Or20} it was shown  that any such noncommutative scheme has a geometric realization. In this way, we obtain a large class of derived noncommutative schemes which are proper and geometric. Although the most important of them are those that are also smooth (or regular).
For finite-dimensional algebras regularity is in fact equivalent to the finiteness of the global dimension with the additional separability property for the semisimple part if we are talking about smoothness.  It was shown that these properties can also be extended to DG algebras (see \cite{Or23}).
Algebras of finite global dimension may be naturally obtained from a directed quiver $Q$ with arbitrary relations $I.$
Any algebra of the form $A=\kk Q/I,$ where $\kk Q$ is the path algebra of a directed quiver $Q$ over a field $\kk,$ and $I$ is an ideal of relations, has  finite global dimension.
In this case, the DG category of perfect complexes $\prfdg A,$ which is smooth, can be obtained as a gluing of several categories of the form $\prfdg \kk$ via perfect bimodules. This demonstrates, using ordinary algebras as example, that the procedure of gluing smooth categories via perfect bimodules is one of the main operations for obtaining new smooth derived noncommutative schemes (see \cite{Or16,Or18,Or20}).

However, it is also very important to study more general algebras and DG algebras of finite global dimension, which, in particular,  cannot be obtained by a gluing procedure and for which categories of perfect modules do not have full exceptional collections.
Algebras of such  type with two simple modules appeared in \cite{G} and in \cite{KK}.
In this regard, the question also arises of finding other operations that allow one to construct smooth algebras and DG algebras, starting from known and elementary smooth algebras. In \cite{Or23} we considered an operation of twisted tensor product of noncommutative algebras and DG algebras over various subalgebras that do not belong to the center, and apply this operation to the construction of new smooth finite-dimensional algebras and DG algebras.
In that paper we gave sufficient conditions for a twisted tensor product of DG algebras to be  smooth.

In this paper we introduce and describe new families of smooth algebras with two simple modules.
We fix a vector space $C$ of dimension $n$ and  a non-increasing sequences of nonnegative integers $\overline{\mathfrak{k}}=\{k_1,\cdots, k_m\}$ such that $0\le k_m\le\cdots\le k_1 \le n.$
Denote by $\mathfrak{F}=\{V_1, W_1;\ldots; V_m, W_m\}$ a family of subspaces $V_i, \,W_i\subseteq C,\; i=1,\dots, m,$ of dimension $k_i$ and of  codimension $k_i,$ respectively. With any such family $\mathfrak{F}$ we associate an algebra $R_{\mathfrak{F}}$ and DG algebras $\dR_{\mathfrak{F}}(\chi),$ which are the algebra $R_{\mathfrak{F}}$ with different $\ZZ$\!--gradings induced by  homomorphisms $\chi:\ZZ^{m+1}\to \ZZ$ and with the zero differentials.
We show that all these DG algebras can be obtained by a sequence of twisted tensor products of elementary smooth algebras (see Theorem \ref{TwProd}).
Together with the results of the paper \cite{Or23}, this implies that under condition  $V_i\cap W_j=0$ when $i\ge j,$ which holds for subspaces $V_i,\,W_j$ in general position, the resulting DG algebras
$\dR_{\mathfrak{F}}(\chi)$ are smooth (see Theorem \ref{smoothtwist}). This construction gives a large number of families of smooth algebras, which include both examples from the paper \cite{G} and examples from the paper \cite{KK}. In Section \ref{EqdimF} we  consider in more detail DG algebras $\dR_{\mathfrak F}(\chi)$ that are related to equidimensional families ${\mathfrak F},$ i.e. such families that all $k_i$ equal to the same integer $k.$
In particular, we show that there is a full embedding of the triangulated category $\prf\dR_{\mathfrak F}(\chi)$ to a triangulated category  with an exceptional collection of length $m+2,$ which in fact is a three-block exceptional collection of the type $(1,1,m)$ (see Theorem \ref{imbedR}). This description allows us to show that in the case of two-dimensional vector space $C$ and one-dimensional subspaces $V_i, W_j,$ the DG algebras
$\dR_{\mathfrak F}(\chi_0)$ for special grading $\chi_0$ provide smooth noncommutative resolutions for singular rational curves (see Theorems \ref{nodres}, \ref{genres}). We also prove that the dimension of a triangulated
categories $\prf\dR_{\mathfrak F}(\chi)$ is equal to $1$ in such case (see Theorem \ref{dimcat}). Thus, in the case of  equidimensional families $\mathfrak{F}$   with $n=2$ and $k=1,$ the categories $\prf\dR_{\mathfrak{F}}(\chi)$
can be considered as  smooth proper noncommutative curves.
Moreover, for  some special families $\mathfrak{F}$ and gradings $\chi_0$ the categories $\prf\dR_{\mathfrak{F}}(\chi_0)$ give  smooth noncommutative resolutions for singular rational curves $\rY_{\mathfrak{F}},$
which can be directly constructed by the family $\mathfrak{F}.$
Such noncommutative resolutions are minimal in sense that the whole K-theory of the categories $\prf\dR_{\mathfrak{F}}(\chi)$ is isomorphic to the direct sum $K_*(\kk)\oplus K_*(\kk)$ of two copies of the K-theory of the base field.

The author is very grateful to Anton Fonarev and Alexander Kuznetsov for useful discussions and valuable comments.

\section{Preliminaries}

\subsection{Differential graded algebras and modules}

Let $\kk$  be a field. Recall that  a {\sf differential graded $\kk$\!--algebra (=DG
algebra)} $\dR=(\gR, \dr)$ is  a $\ZZ$\!--graded associative $\kk$\!--algebra
$
\gR =\bigoplus_{q\in \ZZ} R^q
$
endowed with a $\kk$\!--linear differential $\dr: \gR \to \gR$  (i.e. a homogeneous
map $\dr$ of degree 1 with $\dr^2 = 0$) that satisfies the graded Leibniz rule.
We consider DG algebras with an identity element $1\in R^0.$ In this case, $\dr(1)=0.$

A {\sf differential graded module $\mM$ over  $\dR$ (=DG $\dR$\!--module)} is a $\ZZ$\!--graded right
$\gR$\!--module
$
\gM = \bigoplus_{q\in\ZZ} M^q
$
endowed with a $\kk$\!--linear differential $d_{\mM}: \gM \to \gM$ of degree 1,  $d_{\mM}^2=0,$ satisfying
the graded Leibniz rule
$
d_{\mM}(mr) = d_{\mM}(m) r + (-1)^q m \dr( r)
$
for all $ m\in M^q, r\in \gR.$

For any two DG modules $\mM$ and $\mN$ we define a complex of $\kk$\!--vector spaces $\dHom_{\dR} (\mM, \mN)$
as the graded vector space
$\gHom_{\gR}^{gr} (\gM, \gN):=\bigoplus_{q\in\ZZ}\Hom_{\gR} (\gM, \gN)^q,$
where $\Hom_{\gR} (\gM, \gN)^q$ is the space of homogeneous homomorphisms of $\gR$\!--modules of degree $q.$
The differential $D$ of the complex $\dHom_{\dR} (\mM, \mN)$ is defined by the rule
$
D(f) = d_{\mN} \circ f - (-1)^q
f\circ d_{\mM}$
for any $ f\in \Hom_{\gR} (\gM, \gN)^q.$

All (right) DG $\dR$\!--modules form a DG category
$\Mod \dR$ (see \cite{Ke}). Let $\Ac\dR$ be the full
DG subcategory consisting of all acyclic DG modules, i.e. DG modules with trivial cohomology.
The
homotopy category $\Ho(\Mod\dR)$ has a natural structure of a triangulated category,
and the homotopy subcategory of acyclic complexes $\Ho (\Ac\dR)$ forms a full triangulated subcategory in it.
The {\sf derived
category} $\D(\dR)$ is defined as the Verdier quotient
\[
\D(\dR):=\Ho(\Mod\dR)/\Ho (\Ac\dR).
\]

The derived category $\D(\dR)$ is equivalent to the homotopy category $\Ho(\SF\dR),$
where $\SF\dR\subset\Mod\dR$ is the DG subcategory of all semi-free modules.
Recall that a DG module
$\mP$ is called {\sf semi-free} if it has a filtration
$0=\mPhi_0\subset \mPhi_1\subset ...=\mP=\bigcup \mPhi_n$
with free quotients  $\mPhi_{i+1}/\mPhi_i$ (see \cite{Ke}).
We will also need a notion of semi-projective module.

\begin{definition}\label{spr} A DG $\dR$\!--module $\mM$ is called {\sf semi-projective (DG projective)} if the following equivalent conditions hold:
\begin{itemize}
\item[1)] the DG functor $\dHom_{\dR}(\mM, -)$ preserves surjective quasi-isomorphisms,
\item[2)] $\mM$ is projective as an $R$\!--module and $\dHom_{\dR}(\mM, -)$ preserves quasi-isomorphisms,
\item[3)] $\mM$ is a direct summand of some semi-free DG $\dR$\!--module.
\end{itemize}
\end{definition}

Denote by $\SFf\dR\subset \SF\dR$ the full DG subcategory of finitely generated semi-free
DG modules, i.e. such semi-free DG modules that $\mPhi_n=\mP$ for some $n,$ and $\mPhi_{i+1}/\mPhi_i$ is a finite direct sum of
$\dR[m].$
The {\sf DG category of perfect modules} $\prfdg\dR$
is the full DG subcategory of $\SF\dR$ consisting of all DG modules that are isomorphic to direct summands of objects of $\SFf\dR$
in the homotopy category $\Ho(\SF\dR).$
The homotopy category $\Ho(\prfdg\dR),$ which we denote by $\prf\dR,$  is called {\sf the triangulated category of perfect modules.}
In other words, the category $\prf\dR$ is the subcategory in $\D(\dR)$  that is (classically) generated by the DG algebra $\dR$ itself in the following sense.

\begin{definition}
A set $S$ of objects  of a triangulated category $\T$ {\sf (classically) generates} $\T$
if the smallest full triangulated subcategory of
$\T$ containing $S$ and  closed under taking direct summands coincides with the whole category $\T.$
In the case where the set $S$ consists of a single object $E\in \T,$
the object $E$ is called a {\sf classical generator} of $\T.$
\end{definition}

A classical generator, which  generates a triangulated category in a finite number of steps, is called
a {\sf strong generator}.
More precisely, let $\I_1$ and $\I_2$ be two full subcategories of a triangulated category $\T.$ Denote by $\I_1*\I_2$ the full subcategory of $\T$
consisting of all objects $M$ for which there is an exact triangle $M_1\to M\to M_2$ with $M_i\in \I_i.$
For any subcategory $\I\subset\T$ denote by $\langle \I\rangle$ the smallest full subcategory of $\T$ containing $\I$ and closed under
taking finite direct sums, direct summands, and shifts. We put $\I_1 \diamond\I_2=\langle \I_1*\I_2\rangle$ and  define by induction
$\langle \I\rangle_k=\langle\I\rangle_{k-1}\diamond\langle \I\rangle.$ If $\I$ consists of a single object $E,$ we denote $\langle \I\rangle$ as
 $\langle E\rangle_1$ and put by induction $\langle E\rangle _k=\langle E\rangle_{k-1}\diamond\langle E\rangle_1.$
\begin{definition}
An object $E\in\T$ is called  a {\sf strong generator} if $\langle E\rangle_n=\T$ for some $n\in\NN.$
\end{definition}

If a triangulated category has a strong generator, then all of
its classical generators are  also strong.

Following \cite{Ro}, we can define a dimension for a triangulated
category with a strong generator.
\begin{definition}
The {\em dimension} of a triangulated category $\T,$ denoted by $\dim \T,$ is the smallest integer $d\ge 0$ such that there exists an object $E\in \T$
for which $\langle X\rangle_{d+1}=\T.$
\end{definition}

Let us now discuss some basic properties of DG algebras and DG and triangulated categories of perfect modules over DG algebras.
\begin{definition}\label{propdef} Let $\dR$ be a DG $\kk$\!--algebra. Then
\begin{itemize}
\item[(1)] $\dR$ is called {\sf proper} if the cohomology algebra $\bigoplus_{p\in\ZZ}H^p(\dR)$ is finite-dimensional.
\item[(2)] $\dR$ is called {\sf $\kk$\!--smooth} if  it is perfect as a DG bimodule, i.e. as a DG module over the DG algebra $\dR^{\op}\otimes_{\kk}\dR.$
\item[(3)] $\dR$ is called {\sf regular} if $\dR$ is a strong generator for  the triangulated category $\prf\dR.$
\end{itemize}
\end{definition}

All these properties are properties of the DG category $\prfdg\dR.$ It is easy to see that $\dR$ is proper if and only if
$\bigoplus_{m\in\ZZ}\Hom(X, Y[m])$ is finite-dimensional for any two objects $X, Y\in\prf\dR.$  It is also known that
any smooth DG algebra is regular (see \cite{Lu}).

\begin{definition} For any regular DG algebra $\dR$ we can define a global dimension of $\dR$ as
the smallest integer $d\ge 0$
for which $\langle \dR\rangle_{d+1}=\prf\dR.$
\end{definition}

This definition is consistent with the standard definition of the global dimension for algebras, as follows from Lemma 2.5 of \cite{KrK} (see also  \cite[Th.8.3]{Cr}).

\subsection{Twisted tensor product of algebras and DG algebras}

Recall the definition of a twisted tensor product of algebras and DG algebras.
Let $R, A, B$ be  $\kk$\!--algebras and  $\epsilon_A: R\to A$ and $\epsilon_B: R\to B$ be morphisms of the algebras.
In such case we say that $A$ and $B$ are $R$\!--rings.

\begin{definition}
A {\sf twisted tensor product over $R$} of two $R$\!--rings $A$ and $B$
is an $R$\!--ring $C$ together with two $R$\!--rings morphisms $i_A :
A\to C$ and $i_B : B \to C$ such that the canonical  map $\phi: A\otimes_R B \to C$
defined by $\phi(a\otimes b) := i_A(a)\cdot i_B(b)$ is an isomorphism of $R$\!--bimodules.
\end{definition}

There is a direct way to describe  twisted tensor products.
Let $\phi : A\otimes_R B \stackrel{\sim}{\to} C$ be the canonical isomorphism used in the definition
of the twisted tensor product. Then, we can define a map
\[
\tau : B\otimes_R A \to A\otimes_R B\quad \text{by the rule}\quad
\tau (b \otimes a) :=
\phi^{-1}(i_B(b) \cdot i_A(a)).
\]

Conversely, let $\tau: B\otimes_R A\to A\otimes_R B$ be an $R$\!--bilinear map for which
\begin{equation}\label{fixsides}
\tau (1\otimes a)= a\otimes 1,\; \tau(b\otimes 1)=1\otimes b.
\end{equation}
In this case,  we can  define a multiplication $\mu_{\tau}:=(\mu_A\otimes \mu_B)\circ (1\otimes\tau\otimes 1)$ on the $R$\!--bimodule $A\otimes_R B.$
The multiplication $\mu_{\tau}$ is associative if and only if there is an equality
\begin{equation}\label{twist}
\tau\circ (\mu_B\otimes \mu_A) = (\mu_A\otimes \mu_B) \circ (1\otimes\tau\otimes 1) \circ (\tau \otimes \tau ) \circ (1\otimes\tau\otimes 1)
\end{equation}
of maps from $B\otimes_R B \otimes_R  A \otimes_R A$ to $A \otimes_R B$ (see, e.g., \cite{CSV}).

\begin{definition}
An $R$\!--bilinear map  $\tau$ that satisfies conditions (\ref{fixsides}) and (\ref{twist}) is called a {\sf twisting map} for $A$ and $B$ over $R,$
and we denote the $R$\!--ring $(A\otimes_R B, \mu_{\tau})$ by $A\otimes_R^{\tau}B.$
\end{definition}

The notion of a twisted tensor product can be easily extended to the case of DG algebras.
Let $\dR, \dA, \dB$ be  DG $\kk$\!--algebras and  $\epsilon_{\dA}: \dR\to \dA$ and $\epsilon_{\dB}: \dR\to \dB$ be morphisms of DG algebras,
i.e. the DG algebras $\dA$ and $\dB$ are DG $\dR$\!--rings (or DG rings over $\dR$).

\begin{definition}\label{TwistDGal}
A {\sf twisted tensor product over $\dR$} of two DG $\dR$\!--rings $\dA$ and $\dB$
is a DG $\dR$\!--ring $\dC$ together with two $\dR$\!--rings morphisms $i_{\dA} :
\dA\to \dC$ and $i_{\dB} : \dB \to \dC$ such that the canonical  map $\phi: \dA\otimes_{\dR} \dB \to \dC$
defined by $\phi(a\otimes b) := i_{\dA}(a)\cdot i_{\dB}(b)$ is an isomorphism of $\dR$\!--bimodules.
\end{definition}

It follows from the definition that the differential of $\dC$ is uniquely determined by the Leibniz rule, because we have
$\dc(a\otimes b)=\da(a)\otimes b+(-1)^{\deg(a)} a\otimes \db(b).$
The twisting map $\tau$ associated with a twisted tensor product of DG rings satisfies conditions (\ref{fixsides}) and (\ref{twist}) and, additionally,
\begin{equation}
\tau(\db(b)\otimes a) + (-1)^{\deg(b)}\tau( b\otimes \db(a)) = \dc( \tau(b\otimes a)),
\end{equation}
which means that the map $\tau: \dB\otimes_{\dR} \dA\to \dA\otimes_{\dR} \dB$ should be a map of DG $\dR$\!--bimodules.

\begin{definition}
As for algebras we denote the DG $\dR$\!--ring $\dC=(\dA\otimes_{\dR} \dB, \mu_{\tau}, \dc)$ by $\dA\otimes_{\dR}^{\tau}\dB.$
\end{definition}

\begin{remark}{\rm
Note that for DG algebras there is a generalization of the twisted tensor product which is called a DG twisted tensor product and was introduced in \cite{Or23} (Definition 2.15).
}
\end{remark}

Suppose that the $\dR$\!--rings $\dA$ and $\dB$ have  $\dR$\!--augmentations, i.e. there are given morphisms $\pi_{\dA}: \dA\to \dR$ and $\pi_{\dB}: \dB\to \dR$ for which compositions  $\pi_{\dA}\circ\epsilon_{\dA}$
and $\pi_{\dB}\circ\epsilon_{\dB}$ are the identity maps.
Denote by $\mI_{\dA}=\Ker\pi_{\dA} \subset \dA$ and $\mI_{\dB}=\Ker\pi_{\dB} \subset \dB$ the augmentation DG ideals.
In such a case we have a special  twisting map  $\mathbf{v}: \dB\otimes_{\dR} \dA\to \dA\otimes_{\dR} \dB$ that is defined by the following rule
\begin{equation}\label{vtwist}
\mathbf{v}(b\otimes a)=\epsilon_{\dA}(\pi_{\dB}(b))\cdot a\otimes 1 + 1\otimes b\cdot \epsilon_{\dB}(\pi_{\dA}(a))-\epsilon_{\dA}(\pi_{\dB}(b))\otimes\epsilon_{\dB}(\pi_{\dA}(a)).
\end{equation}
For this twisted tensor product, we obtain $(1\otimes b)(a\otimes 1)=\mathbf{v}(b\otimes a)=0,$ whenever $a\in \mI_{\dA}, b\in \mI_{\dB}.$

Let $\dR=(\gR, \dr)$ be a finite-dimensional DG algebra over a base field $\kk.$
Denote by $\rd\subset R$ the (Jacobson) radical of the $\kk$-algebra $R.$ It is nilpotent and it is a graded ideal. However, the radical $\rd$ is not necessary a DG ideal.
In fact, with any two-sided graded ideal $I\subset\gR$ we can associate two DG ideals $\mI_{-}$ and $\mI_{+},$
where an {\sf internal} DG ideal $\mI_{-}=(I_{-}, \dr)$ consists of all $r\in I$ such that $\dr( r)\in I,$ while
an {\sf external} DG ideal  $\mI_{+}=(I_{+}, \dr)$  is the sum $I+\dr (I).$
Let $\rd\subset\gR$ be the radical of $\gR.$ The DG ideals $\rdi$ and $\rde$ will be called {\sf internal} and {\sf external} DG radicals of DGA $\dR.$
It is known that the natural homomorphism of DGAs $\dR/\rdi\to \dR/\rde$ is a quasi-isomorphism \cite[Lemma 2.4]{Or20}.
The following theorem is a particular case of Theorem 3.13 from \cite{Or23}.

\begin{theorem}\label{DGfinpr}
Let $\dR$ be a finite-dimensional DG algebra. Let $\dA$ and $\dB$ be finite-dimensional $\dR$\!--rings  such that $\dA$ has an augmentation $\pi_A: \dA\to \dR$ with an ideal $\mI_{\dA}=\Ker\pi_{\dA}$ and $\dB$ is semi-projective as a left DG $\dR$\!--module.
Let $\dC=\dA\otimes_{\dR}^{\tau}\dB$ be a twisted tensor product.
 Assume that  $\dA, \dB$ are smooth and the ideal $\mI_{\dA}\otimes_{\dR}\dB\subset \dC$ is contained in the external radical $(\mJ_{\dC})_{+}.$
Then, the DG algebra $\dC=\dA\otimes_{\dR}^{\tau}\dB$  is also smooth.
\end{theorem}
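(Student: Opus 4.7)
The plan is to verify the defining condition of smoothness of $\dC$ directly: that $\dC$ is perfect as a DG module over the enveloping DG algebra $\dC^{\op}\otimes_{\kk}\dC$. The overall strategy is to use the augmentation $\pi_{\dA}$ to produce a finite filtration of $\dC$ whose associated graded pieces are controlled by $\dB$ alone, and then to derive perfectness of each piece, and hence of $\dC$, from the smoothness of $\dA$ and $\dB$ together with the semi-projectivity of $\dB$ over $\dR$.

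Concretely, let $\mI := \mI_{\dA}\otimes_{\dR}\dB\subset \dC$ under the canonical isomorphism of Definition \ref{TwistDGal}; the map $\pi_{\dA}\otimes \mathrm{id}$ identifies the quotient $\dC/\mI$ with the DG $\dR$-ring $\dB$. Since $\dC$ is finite-dimensional and the hypothesis places $\mI$ inside the external DG radical $(\mJ_{\dC})_{+}$, some power $\mI^{N}$ is annihilated after passing through $\dC/\rdi\xrightarrow{\sim}\dC/\rde$, the quasi-isomorphism of \cite[Lemma 2.4]{Or20}. This yields a finite filtration $\dC\supset \mI\supset \mI^{2}\supset\cdots\supset\mI^{N}=0$ in the derived category of DG $\dC$-bimodules. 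Using the exact triangles $\mI^{p+1}\to\mI^{p}\to\mI^{p}/\mI^{p+1}$, the claim reduces by induction on $p$ to showing that each associated graded quotient $\mI^{p}/\mI^{p+1}$ is perfect as a $\dC^{\op}\otimes_{\kk}\dC$-module. For the induction step I would identify $\mI^{p}/\mI^{p+1}$ as an $\dR$-bimodule with $((\mI_{\dA})^{p}/(\mI_{\dA})^{p+1})\otimes_{\dR}\dB$ and observe that its $\dC$-bimodule structure factors through $\dC\twoheadrightarrow\dB$ on both sides because $\mI$ acts trivially on the associated graded. The $\kk$-smoothness of $\dA$ provides a perfect $\dA^{e}$-resolution of each $(\mI_{\dA})^{p}/(\mI_{\dA})^{p+1}$; the semi-projectivity of $\dB$ as a left DG $\dR$-module ensures that tensoring such a resolution by $\dB$ over $\dR$ preserves perfectness on the appropriate side; and combining with the $\kk$-smoothness of $\dB$ (so that $\dB$ is perfect over $\dB^{e}$) produces a perfect $\dC^{e}$-resolution of $\mI^{p}/\mI^{p+1}$, closing the induction.

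The main obstacle I expect is the verification that $\mI$ really annihilates the associated graded from both sides despite the fact that the twisting map $\tau$ entangles the $\dA$- and $\dB$-factors, and that the powers of $\mI$ form an honest filtration of $\dC$ by DG sub-bimodules rather than merely $\kk$-subspaces. This is exactly where the hypothesis $\mI\subset(\mJ_{\dC})_{+}$ intervenes: the external radical is a two-sided DG ideal by construction, so its powers interact well with the differential, and the residual discrepancy between the internal and external versions is washed away in cohomology by \cite[Lemma 2.4]{Or20}. Once this bookkeeping is set up, the inductive reduction to perfectness of the associated graded pieces, combined with the smoothness of $\dA$ and $\dB$ and the semi-projectivity of $\dB$ over $\dR$, yields the smoothness of $\dC=\dA\otimes_{\dR}^{\tau}\dB$.
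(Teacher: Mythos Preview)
First, note that the paper does not actually prove this statement: it is quoted as a particular case of \cite[Theorem~3.13]{Or23}, so there is no in-paper argument to compare your sketch against.

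Your sketch has a genuine gap at the very first step. You assert that $\mI=\mI_{\dA}\otimes_{\dR}\dB$ is a two-sided DG ideal of $\dC$ with $\dC/\mI\cong\dB$ as DG rings. The right-ideal property is immediate, but for $\mI$ to be a \emph{left} ideal one needs $\tau(\dB\otimes_{\dR}\mI_{\dA})\subset\mI_{\dA}\otimes_{\dR}\dB$: computing $(1\otimes b)(a\otimes 1)$ with $a\in\mI_{\dA}$ requires the $\dA$-components of $\tau(b\otimes a)$ to lie in $\mI_{\dA}$, and nothing in the hypotheses forces this for a general twisting map (it holds for the special map $\mathbf{v}$ of (\ref{vtwist}), but the theorem is stated for arbitrary $\tau$). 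The inclusion $\mI\subset(\mJ_{\dC})_{+}$ does not help---the external radical being a two-sided DG ideal does not make $\mI$ one. The same issue propagates: without that compatibility there is no reason the product $\mI\cdot\mI$ computed in $\dC$ should coincide with $\mI_{\dA}^{2}\otimes_{\dR}\dB$, so your identification of the associated graded pieces is unjustified. Your nilpotence step is also off: $(\mJ_{\dC})_{+}=\mJ_{\dC}+\dc(\mJ_{\dC})$ need not be nilpotent, and the quasi-isomorphism $\dC/(\mJ_{\dC})_{-}\to\dC/(\mJ_{\dC})_{+}$ of \cite[Lemma~2.4]{Or20} compares two quotients of $\dC$; it does not say that powers of $\mI$ eventually vanish inside $\dC$. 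You flag the twisting issue as ``the main obstacle'' and then invoke precisely the hypothesis that does not resolve it. A workable route must either derive the compatibility $\tau(\dB\otimes_{\dR}\mI_{\dA})\subset\mI_{\dA}\otimes_{\dR}\dB$ from the given data, or abandon the filtration by $\mI$ altogether---for instance by first showing $\dC$ is perfect over $\dA^{\op}\otimes_{\kk}\dC$ using the semi-projectivity of $\dB$ over $\dR$, and then passing to $\dC^{\op}\otimes_{\kk}\dC$ via the smoothness of $\dA$, which is closer to how \cite{Or23} proceeds.
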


\section{Algebras and DG algebras with two simple modules}

\subsection{Quivers with two vertices and Kronecker algebras}

Let $Q$ be a quiver. It consists of the data
$(\vrt, \Arr, s, t),$
where $\vrt, \Arr$ are finite sets of vertices and arrows, respectively, while
$s, t : \Arr\to  \vrt$
are maps associating to each arrow its source and target.
Let $\kk$ be a field, the path $\kk$\!--algebra $\kk Q$ is  determined
by the generators $e_q$ for $q\in \vrt$ and $a$ for $a\in \Arr$
with the following  relations:
$
e^2_q = e_q,\quad e_r e_q = 0,$
 when
 $r\ne q,$
and
$
e_{t(a)} a = a e_{s(a)} = a.
$
As a $\kk$\!--vector space,
the path algebra $\kk Q$ has a basis consisting of the set of all paths in $Q,$
where a path $\overline{p}$ is a possibly empty sequence $a_{l} a_{l-1}\cdots a_1$ of compatible arrows,
i.e. $s(a_{i+1})=t(a_i)$ for all $i=1,\dots, l-1.$
For an empty path we have to choose a vertex of the quiver.
The composition of two paths $\overline{p}_1$ and $\overline{p}_2$
is defined as the path $\overline{p}_2 \overline{p}_1$ if they are compatible
and as $0$ if they are not compatible.

To obtain a more general class of algebras, we need to introduce quivers with
relations.
A relation on a quiver $Q$ is a subspace of $\kk Q$ spanned by linear
combinations of paths having a common source and a common target, and of length at
least 2.
A quiver with relations is a pair $(Q, I),$ where $Q$ is a quiver and $I$ is a two-sided ideal
of the path algebra $\kk Q$ generated by relations.
The quotient algebra $\kk Q/I$ will be called the quiver algebra of the quiver with relations
$(Q, I).$

Recall that a finite-dimensional $\kk$\!--algebra $\Lambda$ is called basic, if the semisimple part $\Lambda/\rad(\Lambda)$ is isomorphic to $\kk\times\cdots\times \kk.$
It is well-know that any basic finite-dimensional algebra $\Lambda$ is isomorphic to an algebra $\kk Q/I$ for some
quiver with relations $(Q, I)$ (see \cite{Ga}). Moreover, the number of vertices of the quiver $Q$ coincides with the number of simple $\Lambda$\!--modules.

Let $Q_{n,m}$ be a quiver with two vertices $\bone, \btwo$ and with $n$ arrows $\{c_1,\ldots, c_n\}$  from $\bone$ to $\btwo$ and $m$ arrows $\{b_1,\ldots, b_m\}$
 from  $\btwo$ to $\bone.$
\begin{equation}\label{TwoVQ}
Q_{n,m}=\Bigl[
\xymatrix{
\underset{\btwo}{\bullet}\ar@/_0.5pc/[rr]_{b_1} \ar@{{ }{ }}@/_1pc/[rr]_{\vdots} \ar@/_3pc/[rr]_{b_m} & &
\underset{\bone}{\bullet}\ar@/_0.5pc/[ll]_{c_1}  \ar@{{ }{ }}@/_1.5pc/[ll]_{\vdots} \ar@/_3pc/[ll]_{c_n}
}
\Bigr].
\end{equation}

Let $\kk Q_{n,m}$ be the path algebra of the quiver $Q_{n,m}.$
Denote by $B$ and $C$ the vector spaces generated by the arrows $\{b_1,\ldots, b_m\}$ and $\{c_1,\ldots, c_n\},$ respectively.
\begin{remark} {\rm
We do not consider quivers with loops, because quiver algebras with relations for quivers with loops have infinite global dimensions
(see \cite[Cor. 5.6]{Ig}).
}
\end{remark}

The quivers $Q_{n,0}$ are called Kronecker quivers and their path algebras $K_n=K(C)=\kk Q_{n,0}$ are called Kronecker algebras.
Any embedding  of vector spaces $V\subseteq C$ induces an embedding  of the Kronecker algebras $K(V)\subseteq K_n=K(C).$
Denote by $K_m^{\op}$ the Kronecker algebra $\kk Q_{0,m}.$ This algebra is opposite to the algebra $K_m=\kk Q_{m,0}$ on the one hand, and on the other hand it is obviously isomorphic to the algebra $K_m$ as well.

\subsection{New examples of smooth DG algebras}
In this section we introduce and describe new families of smooth algebras with two simple modules.
For simplicity, we assume that the base field $\kk$ is infinite.

We start with a quiver of the form $Q_{n,m},$ as in (\ref{TwoVQ}), and fix some new relations.
These relations will also depend on some sequences of integers $\{k_1,\cdots, k_m\}$ for which $0\le k_m\le\cdots\le k_1 \le n.$

Let us fix a sequence of integers $\overline{\mathfrak{k}}=\{k_1,\cdots, k_m\}$ such that $0\le k_m\le\cdots\le k_1 \le n.$ Let $V_i\subseteq C,\; i=1,\dots,m,$ be some subspaces of dimension $k_i$ and $W_i\subseteq C,\; i=1,\dots, m,$ be some subspaces of codimension $k_i.$
Denote by $\mathfrak{F}=\{V_1, W_1;\ldots; V_m, W_m\}$ the resulting family of subspaces.
We will assume that the family $\mathfrak{F}$ satisfies the following property:
%\smallskip
%\begin{tabular}{ll}
\[
 V_i\cap W_j=0 \quad \text{for any pair} \quad i\ge j. \tag{G}
\]
%\end{tabular}
%\medskip

\noindent
Note that $\dim V_i+\dim W_j=n-k_j+k_i\le n,$ when $i\ge j.$ Therefore, the property (G) holds for subspaces $V_i$ and $W_j$ in  general position.

\begin{definition}\label{defalg} Let $\mathfrak{F}=\{V_1, W_1;\ldots; V_m, W_m\}$ be a family of subspaces
satisfying  Property (G).
Define an algebra $R_{\mathfrak{F}}$ as the quotient algebra $R_{\mathfrak{F}}=\kk Q_{n,m}/I_{\mathfrak{F}},$
where $I_{\mathfrak{F}}$ is an ideal generated by the following elements:
\begin{flalign*}
\begin{array}{rlll}
\mathrm{1)} & b_i c b_j  & \text{for any}\quad c\in C,& \text{when}\quad i\le j;\\
\mathrm{2)} & b_i v & \text{for any}\quad v\in V_i,& \text{where}\quad i=1,\dots, m;\\
\mathrm{3)} & w b_i & \text{for any}\quad w\in W_i,& \text{where}\quad i=1,\dots, m.
\end{array}&&
\end{flalign*}
\end{definition}
\begin{remark}
{\rm
In the case $\mathfrak{F}=\emptyset,$ the algebra $R_{\emptyset}$ is isomorphic to the Kronecker algebra $K_n=K(C)=\kk Q_{n,0}.$
}
\end{remark}

It is easy to see that the algebra $R_{\mathfrak{F}}$ is finite dimensional.
Let $J\subset \kk Q_{n,m}$ be the ideal generated by relation 1) of Definition \ref{defalg}. The quotient algebra $\Anm=\kk Q_{n,m}/J$ is finite-dimensional and, for example, the vector space
$e_1 \Anm e_2$ is generated by elements of the form $b_{p_s}v_{p_s p_{s-1}} \cdots  v_{p_2 p_1}b_{p_1},$ where $1\le s\le m,\; \{p_1,\dots, p_s\}\subseteq \{1,\dots, m\}$ and
$v_{ij}\in C$ for all $i,j.$ For any family ${\mathfrak F}$  the algebra $R_{\mathfrak F}$ can be presented as the  quotient algebra $\Anm/\overline{I}_{\mathfrak F},$ where $\overline{I}_{\mathfrak F}$ is the twosided ideal of the algebra $\Anm$ generated by  relations 2) and 3)
of Definition \ref{defalg}.

The algebra $\Anm=\kk Q_{n,m}/J$ can be graded by the free abelian group $\ZZ^{m+1}.$ Let $\varepsilon_i\in \ZZ^{m+1},\; i=0,1,\dots, m,$ be a basis of the lattice $\ZZ^{m+1}.$ A $\ZZ^{m+1}$\!--grading on the algebra  $\Anm$ is given by setting $\deg(v)=\varepsilon_0$ for all $v\in C$ and $\deg (b_i)=\varepsilon_i$ for any $i=1,\dots, m.$ The nontrivial components of this $\ZZ^{m+1}$\!--grading of the algebra $\Anm$ are related to the pairs $(u;P),$ where  $P=\{p_1,\dots, p_s\}$ is a subset of $\{1,\dots, m\}$ and $u$ is a nonnegative integer that can be equal to
$s-1,\, s,$ or $s+1.$ Thus, the pair $(u;P)$ corresponds to the element $u\varepsilon_0+\varepsilon_{p_1}+\cdots + \varepsilon_{p_s}\in \ZZ^{m+1}.$

It is easy to see that  the graded component $\Anm_{(s-1;P)}$ is isomorphic to the vector subspace
$b_{p_s}C b_{p_{s-1}}\cdots b_{p_2} C b_{p_1}$ that is generated by all elements of the form  $b_{p_s}v_{p_s p_{s-1}} \cdots  v_{p_2 p_1}b_{p_1}$ with $v_{ij}\in C.$
The component $\Anm_{(s+1;P)}$ is isomorphic to the subspace $C b_{p_s}\cdots  b_{p_1} C,$ while the component $\Anm_{(s;P)}$ is the direct sum of the vector subspaces $C b_{p_s} \cdots C b_{p_1}$ and $b_{p_s}C \cdots  b_{p_1} C.$ In particular, we have $\Anm_0=\Anm_{(0;\emptyset)}=\langle e_1\rangle \oplus\langle e_2\rangle,\; \Anm_{\varepsilon_0}=\Anm_{(1;\emptyset)}=C$ and $\Anm_{\varepsilon_i}=\Anm_{(0; \{i\})}\cong\langle b_i\rangle$ for any $i=1,\dots, m.$

Since the relations 2) and 3) of Definition \ref{defalg} are homogenious, the ideal $\overline{I}_{\mathfrak F}$ is graded with respect to the $\ZZ^{m+1}$\!--grading described above. Thus, the algebra $R_{\mathfrak F}$ is also $\ZZ^{m+1}$\!--graded.

 Let us describe all graded components  of the algebra  $R_{\mathfrak{F}}.$
Denote by $U_{ij}\subseteq C$ the sum of the subspaces $V_i$ and $W_j.$ By Property (G), the dimension of the subspace $U_{ij}$ is equal to $n+k_i-k_j$ when $i\ge j.$
Let us choose subspaces $T_{ij}\subseteq C$ that are complements to $U_{ij}$ in $C.$ In the case when $i\ge j,$ the dimension of the subspace $T_{ij}$ is equal to $k_j-k_i.$
In particular case $k_j=k_i,$ we obtain that $T_{ij}=0.$
Let us denote by $\theta_{ij}: C\to T_{ij}$ the natural projections with kernels equal to the vector subspaces $U_{ij}.$

By property (G), we know that $V_i\cap W_i=0$ and, hence, the subspace $W_i\subseteq$ is a complement to $V_i\subseteq C$  and vice versa.
For unifying notations the vector subspaces $V_i$ and $W_j$ will be denoted by $T_{\bullet i}$ and  $T_{j\bullet},$ respectively. By $\theta_{\bullet i}:C\to T_{\bullet i}$ and $\theta_{j\bullet}:C\to T_{j\bullet}$
we also denote the natural projections with kernels isomorphic to $W_i$ and $V_j,$ respectively.

In the cases when $P=\emptyset$ or $u=0,$ the graded components $(R_{\mathfrak{F}})_{(u;P)}$ are isomorphic to $\Anm_{(u;P)}.$  For the pairs $(u;P)$ with $P\ne \emptyset$ and $u\ge 1$ we consider the following maps of vector spaces:
\begin{equation}\label{addbas}
\begin{array}{l}
\phi_{(s-1;P)}:  T_{p_s p_{s-1}}\otimes\cdots\otimes T_{p_2 p_1} \lto (R_{\mathfrak{F}})_{(s-1;P)},\\
\phi_{(s;P)}: (T_{\bullet p_s}\otimes T_{p_s p_{s-1}}\otimes\cdots\otimes T_{p_2 p_1})\oplus (T_{p_s p_{s-1}}\otimes\cdots\otimes T_{p_2 p_1}\otimes T_{p_1\bullet})\lto  (R_{\mathfrak{F}})_{(s;P)},\\
\phi_{(s+1;P)}:  T_{\bullet p_s}\otimes T_{p_s p_{s-1}}\otimes\cdots\otimes T_{p_2 p_1}\otimes T_{p_1\bullet} \lto (R_{\mathfrak{F}})_{(s+1;P)},
\end{array}
\end{equation}
which are defined by the rule:
\[
\begin{array}{l}
\phi_{(s-1;P)}(t_{p_s p_{s-1}}\otimes\cdots\otimes t_{p_2 p_1})=b_{p_s}t_{p_s p_{s-1}}b_{p_{s-1}}\cdots b_{p_2} t_{p_2 p_1}b_{p_1}, \;\;\text{where}\quad t_{ij}\in T_{ij},\\
\phi_{(s;P)}(t_{p_s}\otimes t_{p_s p_{s-1}}\otimes\cdots\otimes t_{p_2 p_1})=   t_{p_s}b_{p_s}t_{p_s p_{s-1}}b_{p_{s-1}}\cdots b_{p_2} t_{p_2 p_1}b_{p_1}, \;\;\text{where}\;\; t_{ij}\in T_{ij},\, t_{p_s}\in T_{\bullet p_s},\\
\phi_{(s;P)}(t_{p_s p_{s-1}}\otimes\cdots\otimes t_{p_2 p_1}\otimes t_{p_1})=b_{p_s}t_{p_s p_{s-1}}b_{p_{s-1}}\cdots b_{p_2} t_{p_2 p_1}b_{p_1} t_{p_1}, \;\;\text{where}\;\; t_{ij}\in T_{ij},\, t_{p_1}\in T_{p_1\bullet}, \\
\phi_{(s+1;P)}(t_{p_s}\otimes t_{p_s p_{s-1}}\otimes\cdots\otimes t_{p_2 p_1}\otimes t_{p_1} )=t_{p_s} b_{p_s}\cdots  t_{p_2 p_1} b_{p_1} t_{p_1}, \;\text{where}
\;\, t_{ij}\in T_{ij},\, t_{p_s}\in T_{\bullet p_s},\, t_{p_1}\in T_{p_1\bullet}.
\end{array}
\]
\smallskip

The following proposition gives us a description of all graded components of the algebra $R_{\mathfrak{F}}.$
\begin{proposition}\label{descr}
For any nonempty subset $P\subseteq\{1,\dots, m\}$  and any integer $u\ge 1$
the homomorphism $\phi_{(u;P)}$ is an isomorphism.
\end{proposition}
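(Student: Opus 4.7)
\emph{Plan.} Order the elements of $P$ so that $p_s>p_{s-1}>\cdots>p_1$; this ordering is forced because in $\Anm=\kk Q_{n,m}/J$ the relation $b_icb_j=0$ for $i\le j$ prevents a nonzero monomial $b_{p_s}c_{s-1}b_{p_{s-1}}\cdots c_1b_{p_1}$ from having anything but strictly decreasing $b$-indices. As already recalled in the text before the proposition, the multiplication maps
\[
C^{\otimes(s-1)}\xrightarrow{\sim}\Anm_{(s-1;P)},\quad (C^{\otimes s})^{\oplus 2}\xrightarrow{\sim}\Anm_{(s;P)},\quad C^{\otimes(s+1)}\xrightarrow{\sim}\Anm_{(s+1;P)}
\]
are isomorphisms, since in the free path algebra $\kk Q_{n,m}$ these maps are visibly bijective on monomials and no relator $b_icb_j$ with $i\le j$ can fit inside a monomial of strictly decreasing $b$-indices. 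Under these identifications, $\phi_{(u;P)}$ is simply the inclusion of the tensor product of $T$-summands into the corresponding tensor power of $C$, followed by the projection $\Anm_{(u;P)}\twoheadrightarrow(R_{\mathfrak F})_{(u;P)}$.

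It remains to compute the intersection $\overline{I}_{\mathfrak F}\cap\Anm_{(u;P)}$. Since $\overline{I}_{\mathfrak F}$ is $\ZZ^{m+1}$-graded and its generators $b_iV_i,\,W_ib_i$ are homogeneous, any element of this intersection is a sum of terms of the form $\alpha\cdot b_iv\cdot\beta$ with $v\in V_i$ and $\alpha\cdot wb_i\cdot\beta$ with $w\in W_i$ lying in $\Anm_{(u;P)}$. The unique monomial form of elements of $\Anm_{(u;P)}$ then pins down where such a relator can sit: $b_iv$ must match a substring $b_{p_{j+1}}c_j$ of our monomial (giving $i=p_{j+1}$, $v\in V_{p_{j+1}}$, $j=1,\dots,s-1$), and analogously $wb_i$ must match some $c_jb_{p_j}$; in grades $(s;P)$ and $(s+1;P)$ the extra $C$-factors at the ends contribute the additional matchings $b_{p_1}c_0$ with $c_0\in V_{p_1}$ on the right and $c_sb_{p_s}$ with $c_s\in W_{p_s}$ on the left. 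A direct check gives
\[
\overline{I}_{\mathfrak F}\cap\Anm_{(s-1;P)}=\sum_{j=1}^{s-1}C^{\otimes(s-1-j)}\otimes U_{p_{j+1},p_j}\otimes C^{\otimes(j-1)},
\]
and analogous formulas for $(s;P)$ (with $W_{p_s}$ on the leftmost factor of the first summand and $V_{p_1}$ on the rightmost factor of the second) and for $(s+1;P)$ (with $W_{p_s}$ at the extreme left and $V_{p_1}$ at the extreme right, together with the interior $U_{p_{j+1},p_j}$'s).

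Passing to the quotient, the direct-sum decompositions $C=T_{p_{j+1},p_j}\oplus U_{p_{j+1},p_j}$ and, at the extreme positions, $C=T_{\bullet p_s}\oplus W_{p_s}$ and $C=T_{p_1\bullet}\oplus V_{p_1}$, identify $(R_{\mathfrak F})_{(u;P)}$ with the tensor product of $T$-spaces appearing as the source of $\phi_{(u;P)}$; inspection shows that the resulting isomorphism is exactly $\phi_{(u;P)}$. The main obstacle is the calculation of $\overline{I}_{\mathfrak F}\cap\Anm_{(u;P)}$: one must verify that no clever combination of relators produces elements in $\Anm_{(u;P)}$ beyond the ``local substitution'' subspace described above. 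The $\ZZ^{m+1}$-grading together with the uniqueness of the monomial form of elements of $\Anm_{(u;P)}$ reduces this to a finite case check of where a single relator can be inserted, which is harmless but needs to be carried out carefully.
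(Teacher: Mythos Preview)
Your proposal is correct and follows essentially the same route as the paper's proof: both identify $\Anm_{(u;P)}$ with the appropriate tensor power of $C$, compute the graded piece $(\overline{I}_{\mathfrak F})_{(u;P)}$ as the sum of subspaces obtained by inserting $U_{p_{j+1},p_j}=V_{p_{j+1}}+W_{p_j}$ (resp.\ $W_{p_s}$ or $V_{p_1}$ at the ends) in one tensor slot, and conclude that the tensor product of the complements $T_{\ast\ast}$ maps isomorphically onto the quotient. Your write-up is in fact slightly more explicit than the paper's about why the ideal component contains nothing beyond these ``local insertion'' subspaces; the paper simply asserts this, relying implicitly on the same grading argument you spell out.
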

\begin{proof}
Let us consider the ideal $\overline{I}_{\mathfrak F}\subset \Anm.$  It is $\ZZ^{m+1}$\!--graded, because relations 2) and 3) of Definition \ref{defalg} are homogenious.
Let $
(\overline{I}_{\mathfrak F})_{(u; P)}\subseteq \Anm_{(u;P)}
$
be the graded component of the ideal $\overline{I}_{\mathfrak F}.$ For any $P\subseteq\{1,\dots,m\}$ the vector subspace $(\overline{I}_{\mathfrak F})_{(s-1; P)}\subseteq b_{p_s}C b_{p_{s-1}}\cdots b_{p_2} C b_{p_1}$ is generated by the subspaces of the form $b_{p_s}C\cdots b_{p_i} V_{p_i}\cdots  C b_{p_1}$ with $1< i\le s$
and by the subspaces of the form $b_{p_s}C\cdots W_{p_j}b_{p_j}\cdots C b_{p_1}$ with $1\le j<s.$ Thus, the subspace $(\overline{I}_{\mathfrak F})_{(s-1; P)}$ can be obtained as the sum of the subspaces of the form
$b_{p_s}C\cdots b_{p_i} U_{p_i p_{i-1}}b_{p_{i-1}}\cdots C b_{p_1}$ with $1< i\le s,$ where $U_{ij}$ is the sum of $V_{i}$ and $W_{j}$ as above. Now, since the subspaces $T_{ij}\subseteq C$ are complements to $U_{ij}$ in $C,$
 the subspace $b_{p_s}T_{p_s p_{s-1}}b_{p_{s-1}}\cdots b_{p_2} T_{p_2 p_1}b_{p_1}$ forms a complement to the subspace
$(\overline{I}_{\mathfrak F})_{(s-1; P)}$ in the space  $b_{p_s}C b_{p_{s-1}}\cdots b_{p_2} C b_{p_1}$ for any $P\subset\{1,\dots,m\}.$
This implies that  the maps $\phi_{(s-1; P)}$ are isomorphisms for any $P\ne \emptyset$ and $s>1.$ In a similar way, we show that maps $\phi_{(s; P)}$ and  $\phi_{(s+1; P)}$ are isomorphisms too for any
$P\ne \emptyset.$
\end{proof}

Now we describe action of  the multiplication operation on the grading components of the algebra $R_{\mathfrak{F}}.$
Consider two graded components $(R_{\mathfrak{F}})_{(u;P)}$ and $(R_{\mathfrak{F}})_{(v;Q)},$ where $P=\{p_1,\dots, p_s\}$ and $Q=\{q_1,\dots, q_t\}$ are nonempty subsets
of $\{1,\dots, m\}.$ If $p_1\le q_t,$ the multiplication $(R_{\mathfrak{F}})_{(u;P)}\cdot (R_{\mathfrak{F}})_{(v;Q)}$ is zero for any two elements from these graded components.
A nontrivial multiplication appears in the case when $p_1>q_t.$ In this situation the multiplication is defined as a natural surjective map
\[
\mu^{u,v}_{P,Q}: (R_{\mathfrak{F}})_{(u;P)}\otimes (R_{\mathfrak{F}})_{(v;Q)}\lto (R_{\mathfrak{F}})_{(v+u;Q\bigsqcup P)},
\]
which is induced by the surjective map $T_{p_1\bullet}\to C\stackrel{\theta_{p_1,q_t}}{\lto} T_{p_1 q_t},$ in the case $u\ge s, v\le t,$ and
by the surjective map $T_{\bullet q_t}\to C\stackrel{\theta_{p_1,q_t}}{\lto} T_{p_1 q_t},$  in the case $v\ge t, u\le s.$
Note that the graded components $(R_{\mathfrak{F}})_{(v+u;Q\bigsqcup P)}$ are trivial if $|v+u-t-s|>1.$
If the subset $P$ is empty, the multiplication $(R_{\mathfrak{F}})_{(1;\emptyset)}=C$ with any $(R_{\mathfrak{F}})_{(v;Q)},$ where $v\le t,$ is induced by the surjective projection
$\theta_{\bullet q_t}:C\to T_{\bullet q_t}.$ Similarly, if $Q=\emptyset,$ the multiplication  $(R_{\mathfrak{F}})_{(u;P)}, u\le s$ with $(R_{\mathfrak{F}})_{(1;\emptyset)}=C$ is induced by the surjective projection
$\theta_{p_1 \bullet}:C\to T_{p_1 \bullet}.$

With any algebra $R_{\mathfrak{F}}$ we can associate many different DG algebras that are related to different $\ZZ$\!-gradings on it.
\begin{definition}
Let $\chi:\ZZ^{m+1}\to \ZZ$ be a homomorphism of groups. By $\dR_{\mathfrak{F}}(\chi)$ we denote the DG algebra, which is the algebra $R_{\mathfrak{F}}$ equipped with the $\ZZ$\!--grading induced by the homomorphism $\chi$ and with the zero differential.
\end{definition}

Thus, the DG algebra $\dR_{\mathfrak{F}}(\chi)$ is the algebra $R_{\mathfrak{F}}$ for which the elements $c\in C$ have degree $\chi(\varepsilon_0),$ while the elements $b_i$ have degree
$\chi(\varepsilon_i)$ for each $i=1,\dots, m.$
When the homomorphism $\chi$ is trivial, we obtain our usual algebra  $R_{\mathfrak{F}}.$

\subsection{Algebras and DG algebras with two simple modules as a twisted tensor products}

In this section we describe the algebras $R_{\mathfrak{F}}$ and the DG algebras $\dR_{\mathfrak{F}}(\chi)$ as  iterated twisted tensor products of certain  algebras (and DG algebras)  that have a simple structure.
As a consequence we will be able to show that the algebras $R_{\mathfrak{F}}$ have finite global dimensions and, moreover, all DG algebras $\dR_{\mathfrak{F}}(\chi)$ are smooth.

Let $\mathfrak{F}=\{V_1, W_1;\ldots; V_m, W_m\}$ be a family of subspaces
satisfying  Property (G). Consider the family $\mathfrak{G}=\{V_1, W_1;\dots; V_{m-1}, W_{m-1}\}$ which consists of the subspaces $V_i,\; W_i$ with $i< m.$  Let $R_{\mathfrak{G}}$ be the algebra that is related to
the family $\mathfrak{G}$ in the sense of our Definition \ref{defalg}.
The algebra $R_{\mathfrak{G}}$ can be considered as a subalgebra of $R_{\mathfrak{F}}.$ Moreover, the subalgebra $R_{\mathfrak{G}}\subset R_{\mathfrak{F}}$ consists exactly those graded components
$(R_{\mathfrak{F}})_{(u;P)}$ for which $P=\{p_1,\dots, p_s\}$ is a subset of $\{1,\dots, m-1\}.$

Let us take the vector subspace $V_{m}\subseteq C$ and consider the Kronecker algebra $K(V_{m}).$
We have a natural embedding of the algebras $K(V_{m})\subset R_{\mathfrak{G}}.$ Thus,  the algebra $R_{\mathfrak{G}}$ can be considered as a $K(V_{m})$\!--ring.
The next lemma, which we will need in the sequel, is almost evident.

\begin{lemma}\label{inject}
The canonical map $V_m\otimes e_1 R_{\mathfrak{G}}\to e_2 R_{\mathfrak{G}}$ is injective.
\end{lemma}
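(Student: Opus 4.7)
The plan is to exploit the $\ZZ^{m+1}$-grading on $R_{\mathfrak{G}}$ from Proposition \ref{descr}. Since $V_m \subseteq C$ is homogeneous of multi-degree $\varepsilon_0$, left multiplication by any $v \in V_m$ shifts the grading by $\varepsilon_0$ and therefore carries $V_m \otimes (e_1 R_{\mathfrak{G}})_{(u; P)}$ into $(e_2 R_{\mathfrak{G}})_{(u+1; P)}$ for every $P \subseteq \{1, \dots, m-1\}$. Thus it is enough to verify injectivity on each graded piece separately, which rules out any cancellation between distinct summands.

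First I would enumerate the non-zero graded components of $e_1 R_{\mathfrak{G}}$ using Proposition \ref{descr}: apart from the trivial piece $\kk e_1$, they are the whole summand $(R_{\mathfrak{G}})_{(s-1; P)} \cong T_{p_s p_{s-1}} \otimes \cdots \otimes T_{p_2 p_1}$ with representative $b_{p_s} t_{p_s p_{s-1}} \cdots t_{p_2 p_1} b_{p_1}$ (an element of $e_1 R_{\mathfrak{G}} e_2$), and the summand $T_{p_s p_{s-1}} \otimes \cdots \otimes T_{p_2 p_1} \otimes T_{p_1 \bullet}$ of $(R_{\mathfrak{G}})_{(s; P)}$ with representative $b_{p_s} t_{p_s p_{s-1}} \cdots t_{p_2 p_1} b_{p_1} t_{p_1}$ (an element of $e_1 R_{\mathfrak{G}} e_1$). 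The other summand of $(R_{\mathfrak{G}})_{(s; P)}$ and the whole of $(R_{\mathfrak{G}})_{(s+1; P)}$ lie in $e_2 R_{\mathfrak{G}}$ and therefore do not enter. For $P = \emptyset$ the map is simply the inclusion $V_m \hookrightarrow C$, which is injective.

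For $P \neq \emptyset$, left multiplication by $v \in V_m$ produces an expression beginning with $v b_{p_s}$. Since $w b_{p_s} = 0$ for $w \in W_{p_s}$ by relation 3 of Definition \ref{defalg}, the product depends on $v$ only through its projection $\theta_{\bullet p_s}(v) \in V_{p_s}$. Under the identifications from Proposition \ref{descr} the map on each of the two summands above acquires the form $\theta_{\bullet p_s}|_{V_m} \otimes \id$. The key input is Property (G) applied with $i = m \geq p_s = j$ (which holds since $p_s \leq m-1$), giving $V_m \cap W_{p_s} = 0$. Therefore $\theta_{\bullet p_s}$ restricted to $V_m$ is injective, and hence so is the map on this graded piece.

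The main obstacle, if there is one, is only bookkeeping: keeping careful track of which summands in Proposition \ref{descr} actually sit inside $e_1 R_{\mathfrak{G}}$ (as opposed to $e_2 R_{\mathfrak{G}}$) and verifying that relation 3, applied to the leading factor $v b_{p_s}$, is the only ingredient needed on each summand. All substantive content is absorbed into the single inequality $V_m \cap W_{p_s} = 0$ supplied by Property (G); no further argument is required.
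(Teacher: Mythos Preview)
Your proof is correct and follows essentially the same approach as the paper's own argument: both reduce, via the graded isomorphisms of Proposition~\ref{descr}, to the injectivity of the restriction $\theta_{\bullet j}|_{V_m}: V_m \to T_{\bullet j} \cong V_j$ for each $j<m$, which is then supplied by Property~(G) in the form $V_m \cap W_j = 0$. The paper compresses this into two sentences, while you spell out the bookkeeping of which graded components lie in $e_1 R_{\mathfrak G}$ and how left multiplication by $v$ interacts with them; the substantive content is identical.
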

\begin{proof}
Taking in account isomorphisms (\ref{addbas}), injectivity of the map $V_m\otimes e_1 R_{\mathfrak{G}}\to e_2 R_{\mathfrak{G}}$ follows from injectivity of the composition maps
$V_m\to C\stackrel{\theta_{\bullet j}}{\lto} T_{\bullet j}\cong V_j$ for each $j<m.$ The latter follows from property (G) which, in particular, requires that $V_m\cap W_j=0$ for any $j\le m.$
\end{proof}

Now we can show that the algebra $R_{\mathfrak{G}}$ is  a projective as the left $K(V_{m})$\!--module.

\begin{proposition}\label{RleftK} The algebra $R_{\mathfrak{G}}$ is  a projective left $K(V_{m})$\!--module.
\end{proposition}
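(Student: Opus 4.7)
The plan is to exploit the very simple representation theory of the Kronecker algebra $K(V_m)$ together with Lemma \ref{inject}. The algebra $K(V_m)$ has exactly two indecomposable projective left modules: $P_1 := K(V_m) e_1 \cong \kk e_1 \oplus V_m$ (where $\kk e_1 = e_1 P_1$ and $V_m = e_2 P_1$) and $P_2 := K(V_m) e_2 \cong \kk e_2$. For an arbitrary left $K(V_m)$-module $L$, the decomposition $L = e_1 L \oplus e_2 L$ together with the left-multiplication map $\mu_L : V_m \otimes_\kk e_1 L \to e_2 L$ encodes the entire module structure, so I will first establish the following projectivity criterion: $L$ is projective if and only if $\mu_L$ is injective. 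Indeed, if $\mu_L$ is injective, then choosing a basis $\{x_\alpha\}_{\alpha \in A}$ of $e_1 L$, the images $V_m \cdot x_\alpha \subset e_2 L$ are linearly independent, so each submodule $K(V_m) x_\alpha = \kk x_\alpha \oplus V_m \cdot x_\alpha$ is isomorphic to $P_1$ and their sum is direct. Taking any vector-space complement $N \subset e_2 L$ of $\mu_L(V_m \otimes e_1 L)$, one obtains $L \cong P_1^{\oplus |A|} \oplus P_2^{\oplus \dim N}$.

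Next, I would decompose $R_{\mathfrak{G}}$ as a left $K(V_m)$-module using the orthogonal idempotents on the right: $R_{\mathfrak{G}} = R_{\mathfrak{G}} e_1 \oplus R_{\mathfrak{G}} e_2$. This is legitimate as a decomposition of left $K(V_m)$-modules because right multiplication by $e_j$ commutes with the left action of $K(V_m) \subset R_{\mathfrak{G}}$. For each $j \in \{1,2\}$, applying the criterion to $L_j := R_{\mathfrak{G}} e_j$ reduces to checking injectivity of $V_m \otimes e_1 R_{\mathfrak{G}} e_j \to e_2 R_{\mathfrak{G}} e_j$. This map is simply the restriction, to the right-$e_j$ summand on both sides, of the canonical map $V_m \otimes e_1 R_{\mathfrak{G}} \to e_2 R_{\mathfrak{G}}$. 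Since the latter is injective by Lemma \ref{inject}, so is each restriction.

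Combining the two steps, both $R_{\mathfrak{G}} e_1$ and $R_{\mathfrak{G}} e_2$ are projective left $K(V_m)$-modules, and hence so is their direct sum $R_{\mathfrak{G}}$. There is no real obstacle here: the Kronecker-algebra projectivity criterion is essentially folklore, and the only non-trivial input, Lemma \ref{inject}, has already been proved. The mild point requiring attention is verifying that the splitting in the projectivity criterion really goes through on the summands $R_{\mathfrak{G}} e_j$, but this follows automatically from the fact that injectivity of the full map $V_m \otimes e_1 R_{\mathfrak{G}} \to e_2 R_{\mathfrak{G}}$ is inherited by each of the direct-summand restrictions.
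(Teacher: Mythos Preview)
Your proposal is correct and follows essentially the same route as the paper: both arguments reduce the projectivity of $R_{\mathfrak G}$ as a left $K(V_m)$\!--module to the injectivity of the multiplication map $V_m\otimes e_1 R_{\mathfrak G}\to e_2 R_{\mathfrak G}$ supplied by Lemma~\ref{inject}, using that the cokernel is automatically a direct sum of copies of the simple projective $K(V_m)e_2$. The only cosmetic difference is that the paper phrases this via the evaluation map $Q_1\otimes\Hom_{K(V_m)}(Q_1,R_{\mathfrak G})\to R_{\mathfrak G}$, whereas you state an explicit projectivity criterion; your additional splitting $R_{\mathfrak G}=R_{\mathfrak G}e_1\oplus R_{\mathfrak G}e_2$ via right idempotents is harmless but unnecessary, since the criterion applies directly to $L=R_{\mathfrak G}$.
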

\begin{proof}
The indecomposable left projective $K(V_{m})$\!--modules are $Q_1=K(V_{m})e_1=V_{m}\oplus\langle e_1 \rangle$ and $Q_2=K(V_{m})e_2=\langle e_2\rangle.$
Let us take $\Hom_{K(V_m)}(Q_1, R_{\mathfrak{G}})$ in the category of left  $K(V_{m})$\!--modules.
This vector space  is isomorphic to $e_1 R_{\mathfrak{G}}.$ Consider the canonical map
\[
\ev: Q_1\otimes\Hom_{K(V_m)}(Q_1, R_{\mathfrak{G}})\lto R_{\mathfrak{G}}.
\]
Since the quotient of this map is a projective module of the form $Q_2^{\oplus N},$ we have to show that the map
$\ev$ is injective. This is equivalent to check that the canonical map $V_m\otimes e_1 R_{\mathfrak{G}}\to e_2 R_{\mathfrak{G}}$ is injective.
Thus, Lemma \ref{inject} implies the proposition.
\end{proof}

The algebra $R_{\mathfrak{G}}$ admits many different augmentations $R_{\mathfrak{G}}\to K(V_{m}).$ They are directly related to specifying a complement to $V_m$ in $C.$
Now, we fix the augmentation which is related to the vector subspace $W_m\subseteq C.$ Such augmentation $\pi: R_{\mathfrak{G}}\to K(V_{m})$ is given by setting $\pi(W_{m})=0.$

Denote by $K(V_m; b_m)$  the subalgebra of $R_{\mathfrak F}$ that is generated by the subspace $V_m\subseteq C$  and the
element $b_m.$  Relation 2) of Definition \ref{defalg} says that $b_mV_m=0.$ Taking in account this relation we
obtain that the algebra $K(V_m; b_m)$ has the following nontrivial graded components:
\[
\langle e_1\rangle\oplus\langle e_2\rangle,\quad V_m\subseteq C=(R_{\mathfrak F})_{(1,\emptyset)},\quad \langle b_m\rangle=(R_{\mathfrak F})_{(0,\{m\})},\quad  V_m b_m\subseteq (R_{\mathfrak F})_{(1;\{m\})}.
\]
Moreover, the subspace $V_mb_m$ of the space $(R_{\mathfrak F})_{(1;\{m\})}=T_{\bullet m}b_m\oplus b_mT_{m\bullet}$
 coincides with the direct
summand $T_{\bullet m}b_m,$ because $V_m=T_{\bullet m}.$
It is easy to see that the algebra $K(V_m; b_m)$ is also isomorphic to the twisted tensor product of the
Kronecker algebra $K(V_{m})$  with the opposite Kronecker algebra $K_{1}^{\op}\cong\kk Q_{0,1}$ over the semisimple part
$S\cong\kk\times \kk$ via the twisting map $\mathbf{v}$ defined by formula (\ref{vtwist}). In other words, we have an isomorphism
\[
K(V_m; b_m)\cong K(V_{m})\otimes^{\mathbf{v}}_S K_{1}^{\op}.
\]
In addition, it should be noted that the algebra $K(V_m; b_m)$ is  the $K(V_{m})$\!--ring and has a canonical augmentation $K(V_{m})\otimes^{\mathbf{v}}_S K_{1}^{\op}\to K(V_{m}).$

\begin{theorem}\label{TwProd}
The algebra $R_{\mathfrak{F}}$ is isomorphic to a twisted tensor product of the algebras $(K(V_{m})\otimes^{\mathbf{v}}_S K_{1}^{\op})$ and $R_{\mathfrak{G}}$ over the algebra $K(V_{m})$
via the twisting map $\mathbf{v}$ defined by formula (\ref{vtwist}), i.e. we have
\[
R_{\mathfrak{F}}\cong (K(V_{m})\otimes^{\mathbf{v}}_S K_{1}^{\op})\otimes_{K(V_{m})}^{\mathbf{v}} R_{\mathfrak{G}}.
\]
\end{theorem}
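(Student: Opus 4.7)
The plan is to construct the isomorphism explicitly as the multiplication map, check it is a $K(V_m)$-bimodule isomorphism by a graded dimension count, and finally verify that it intertwines the algebra structures. Both $A := K(V_m;b_m)$ and $B := R_{\mathfrak{G}}$ embed naturally as subalgebras of $R_{\mathfrak{F}}$, and their intersection contains $K(V_m)$, so multiplication inside $R_{\mathfrak{F}}$ gives a canonical $K(V_m)$\!--bimodule map $\phi\colon A\otimes_{K(V_m)}B\lto R_{\mathfrak{F}}$, $\phi(a\otimes g)=a\cdot g$. Every object in sight is compatibly $\ZZ^{m+1}$\!--graded and $\phi$ is homogeneous, so it suffices to compare graded components $(u;P')$. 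For $P'\subset\{1,\dots,m-1\}$ the component on the right lives entirely in $R_{\mathfrak{G}}$ and is matched by $1\otimes(R_{\mathfrak{G}})_{(u;P')}$ on the left. For $P'=P\cup\{m\}$ with $\max P<m$, Relation~1) of Definition~\ref{defalg} forces $b_m$ to be the leftmost $b$ in any nonzero monomial, so each basis element from (\ref{addbas}) factors as $a\cdot g$ with $a$ in the augmentation ideal $\mI_A=\langle b_m\rangle\oplus V_m b_m$ of $A$ and $g\in R_{\mathfrak{G}}$; this gives surjectivity. Injectivity becomes a dimension count thanks to Proposition~\ref{RleftK}: since $B$ is projective as a left $K(V_m)$-module the tensor product does not collapse, and matching the bases produced by (\ref{addbas}) on both sides yields the required equality of dimensions.

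Next I would verify compatibility of the multiplications. Let $\pi_A\colon A\to K(V_m)$ be the augmentation sending $b_m\mapsto 0$, and $\pi_B\colon B\to K(V_m)$ the augmentation sending $W_m\mapsto 0$ and $b_i\mapsto 0$ for $i<m$. Their kernels are $\mI_A=\langle b_m\rangle\oplus V_m b_m$ and $\mI_B\subset R_{\mathfrak{G}}$. Expanding $a=\pi_A(a)+(a-\pi_A(a))$ and $b=\pi_B(b)+(b-\pi_B(b))$ and applying the formula (\ref{vtwist}), the requirement that $\phi$ intertwines the $\mathbf{v}$\!--twisted multiplication on $A\otimes_{K(V_m)}B$ with that of $R_{\mathfrak{F}}$ collapses to the single identity $x\cdot z=0$ in $R_{\mathfrak{F}}$ for every $x\in\mI_B$ and $z\in\mI_A$. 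By bilinearity this reduces to three short case-checks: for $z=b_m$ and $x\in W_m$, apply Relation~3); for $z=b_m$ and $x\in\mI_B$ involving some $b_i$ with $i<m$, the only case in which $xb_m$ composes as a nonzero path yields a subfactor $b_{p_1}\, t\, b_m$ with $p_1<m$, killed by Relation~1); for $z=vb_m$ with $v\in V_m$, either $xv=0$ by path incompatibility or the product $x(vb_m)=(xv)b_m$ contains a subfactor $b_{p_1}\, v\, b_m$ with $p_1<m$, again killed by Relation~1).

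The main obstacle is the bookkeeping in the third step: one must carefully track when each product of path monomials survives as a nonzero composable path before invoking the correct relation of Definition~\ref{defalg}. The first two steps are conceptually routine given Propositions~\ref{descr} and~\ref{RleftK}, but they require care in reconciling the direct-summand structures produced by the maps $\phi_{(u;P)}$ of (\ref{addbas}) with the tensor-product structure on the left-hand side.
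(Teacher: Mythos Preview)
Your proposal is correct and follows essentially the same approach as the paper's proof. Both establish that the multiplication map $K(V_m;b_m)\otimes_{K(V_m)}R_{\mathfrak G}\to R_{\mathfrak F}$ is a $K(V_m)$\!--bimodule isomorphism---the paper by decomposing $K(V_m;b_m)$ as a right $K(V_m)$\!--module and explicitly identifying the image of $\langle b_m\rangle\otimes_{K(V_m)}R_{\mathfrak G}$ and $V_mb_m\otimes_{K(V_m)}R_{\mathfrak G}$ inside the graded components of $R_{\mathfrak F}$, you by surjectivity plus a dimension count (your appeal to Proposition~\ref{RleftK} serves the same role as the paper's use of Lemma~\ref{inject})---and both then verify that the twisting is $\mathbf v$ via exactly the same reduction to $\mI_B\cdot\mI_A=0$ and the same case analysis on the relations of Definition~\ref{defalg}.
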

\begin{proof}
Consider the natural map $\rho: K(V_m; b_m)\otimes_{K(V_m)} R_{\mathfrak{G}}\to R_{\mathfrak{F}}.$ Let us show that this map is an
isomorphism. As the right $K(V_m)$\!--module $K(V_m; b_m)$  has the following decomposition $K(V_m)\oplus (V_m b_m\oplus \langle b_m\rangle)$
 and the direct summand $(V_m b_m\oplus \langle b_m\rangle)$ is isomorphic to $\langle e_2\rangle^{k_m+1},$
where $\langle e_2\rangle$ is the simple right $K(V_m)$\!--module.
Thus, the tensor product $K(V_m; b_m)\otimes_{K(V_m)} R_{\mathfrak{G}}$ can be decomposed
as a direct sum of $R_{\mathfrak{G}}$ and the vector space $(V_m b_m\oplus \langle b_m\rangle)\otimes_{K(V_m)} R_{\mathfrak{G}}.$

We already know that the subalgebra $R_{\mathfrak{G}}\subset R_{\mathfrak{F}}$ consists exactly those graded components
$(R_{\mathfrak{F}})_{(u,P)}$
for which $P=\{p_1,\dots, p_s\}$ is a subset of $\{1,\dots, m-1\}.$ Now let us describe the restriction of the
map $\rho$ on the tensor product $(V_m b_m\oplus \langle b_m\rangle)\otimes_{K(V_m)} R_{\mathfrak{G}}.$

Consider the indecomposable right projective $K(V_{m})$\!--modules $Q'_1=e_1K(V_{m})=\langle e_1 \rangle$ and $Q'_2=e_2 K(V_{m})=V_m\oplus \langle e_2\rangle.$
The simple right $K(V_m)$\!--module  $\langle e_2\rangle$ can be obtained as the cokernel of the natural injective map $V_m\otimes Q'_1\to Q'_2.$
Therefore, the tensor product $\langle e_2\rangle\otimes_{K(V_m)} R_{\mathfrak{G}}$ is isomorphic to the cokernel of the map
\[
V_m\otimes Q'_1\otimes_{K(V_m)} R_{\mathfrak{G}} \lto Q'_2\otimes_{K(V_m)} R_{\mathfrak{G}}.
\]

Thus, the tensor product $\langle e_2\rangle\otimes_{K(V_m)}  R_{\mathfrak{G}}$ is isomorphic to the cokernel of the map
\[
V_m\otimes e_1 R_{\mathfrak{G}}\to e_2 R_{\mathfrak{G}}.
\]

The vector space $e_2 R_{\mathfrak{G}}\subset R_{\mathfrak{F}}$ has the following decomposition
\[
\langle e_2\rangle\oplus C\oplus\bigoplus_{
\begin{subarray}{l}
P=\{p_1,\dots,p_s\}\\
P\subseteq\{1,\dots,m-1\}
\end{subarray}
}
T_{\bullet p_s}b_{p_s}\cdots T_{p_2 p_1}b_{p_1}\oplus
\bigoplus_{
\begin{subarray}{l}
P=\{p_1,\dots,p_s\}\\
P\subseteq\{1,\dots,m-1\}
\end{subarray}
}
T_{\bullet p_s}b_{p_s}\cdots T_{p_2 p_1}b_{p_1} T_{p_1\bullet}.
\]
We know that the canonical maps $V_m\to C\stackrel{\theta_{\bullet i}}{\lto} T_{\bullet i}\cong V_i$ are injective for each $i<m,$
and the quotients $T_{\bullet i}/V_m$ are isomorphic to $T_{mi}$ for any $i<m.$ Taking in account these isomorphisms and the isomorphisms $C/V_m\cong T_{m\bullet}=W_m,$
 we obtain that the restriction of the map $\rho$
on the tensor product $\langle b_m\rangle\otimes_{K(V_m)}  R_{\mathfrak{G}}$ is injective, and under this embedding the space $\langle b_m\rangle\otimes_{K(V_m)}  R_{\mathfrak{G}}$
is equal to
\[
\langle b_m\rangle\oplus b_m T_{m\bullet}\oplus\bigoplus_{
\begin{subarray}{l}
P=\{p_1,\dots,p_s\}\\
P\subseteq\{1,\dots,m-1\}
\end{subarray}
}
b_mT_{m p_s}b_{p_s}\cdots T_{p_2 p_1}b_{p_1}\oplus
\bigoplus_{
\begin{subarray}{l}
P=\{p_1,\dots,p_s\}\\
P\subseteq\{1,\dots,m-1\}
\end{subarray}
}
b_mT_{m p_s}b_{p_s}\cdots T_{p_2 p_1}b_{p_1} T_{p_1\bullet}.
\]
Similarly, since $V_m=T_{\bullet m}$ the tensor product $V_m b_m\otimes_{K(V_m)}  R_{\mathfrak{G}}$ is embedded to $R_{\mathfrak{F}}$ and under this
embedding the space $V_m b_m\otimes_{K(V_m)}  R_{\mathfrak{G}}$  is isomorphic to
\[
T_{\bullet m} b_m\oplus T_{\bullet m} b_m T_{m\bullet}\oplus\bigoplus_{
\begin{subarray}{l}
P=\{p_1,\dots,p_s\}\\
P\subseteq\{1,\dots,m-1\}
\end{subarray}
}
T_{\bullet m} b_mT_{m p_s}b_{p_s}\cdots T_{p_2 p_1}b_{p_1}\oplus
\bigoplus_{
\begin{subarray}{l}
P=\{p_1,\dots,p_s\}\\
P\subseteq\{1,\dots,m-1\}
\end{subarray}
}
T_{\bullet m} b_mT_{m p_s}b_{p_s}\cdots T_{p_2 p_1}b_{p_1} T_{p_1\bullet}.
\]

Thus, the map $\rho$ induces an embedding of the tensor product $(V_m b_m\oplus \langle b_m\rangle)\otimes_{K(V_m)} R_{\mathfrak{G}}$ to $R_{\mathfrak{F}}$ and the image of this embedding
consists of all components $(R_{\mathfrak{F}})_{(u,P)}$ for which $m\in P.$ Therefore, the total map $\rho: K(V_m; b_m)\otimes_{K(V_m)} R_{\mathfrak{G}}\to R_{\mathfrak{F}}$ is an isomorphism. This means that the algebra
$R_{\mathfrak{F}}$ is isomorphic to a twisted tensor product of the algebras $K(V_m; b_m)$ and $ R_{\mathfrak{G}}$ over $K(V_m).$

Finally, it remains to check that the twisting map is exactly the one given by formula (\ref{vtwist}). To see this
we have to show that the product of any two elements from $\Ker\pi$ and $(V_m b_m \oplus \langle b_m\rangle)$ is zero. If
$a\in (R_{\mathfrak{G}})_{(u,P)}$ and $P\ne\emptyset,$ we have $ab_m=0$ and $aV_m b_m=0.$ If now $a\in (R_{\mathfrak{G}})_{(1,\emptyset)}=C,$ the product
$ab_m$ is equal to $0$ exactly when $a$ belongs to $W_m\subset\Ker\pi.$ Therefore, the algebra $R_{\mathfrak{F}}$ is isomorphic to the twisted
tensor product $(K(V_{m})\otimes^{\mathbf{v}}_S K_{1}^{\op})\otimes_{K(V_{m})}^{\mathbf{v}} R_{\mathfrak{G}}$ via the twisting map $\mathbf{v}$ defined by formula (\ref{vtwist}).
\end{proof}

This theorem can be easily generalized to the case of DG algebras. Indeed, let us consider a DG algebra $\dR_{\mathfrak{F}}(\chi)$ for some homomorphism $\chi:\ZZ^{m+1}\to \ZZ.$
The homomorphism $\chi$ induces a homomorphism $\chi':\ZZ^{m}\to \ZZ,$ where $\ZZ^{m}$ is the subgroup of $\ZZ^{m+1}$ generated by $\varepsilon_0,\dots, \varepsilon_{m-1}.$
We can consider the DG algebra $\dR_{\mathfrak{G}}(\chi')$ that is the algebra $R_{\mathfrak{G}}$ with $\ZZ$\!--grading induced by $\chi'$ and with zero differential.
The homomorphism $\chi$ also induces a grading on the subalgebra $K(V_m; b_m)\subset R_{\mathfrak F}.$ We denote by $K(V_{m})(\chi(\varepsilon_0))$ and $K_{1}^{\op}(\chi(\varepsilon_m))$
 the DG algebras  for which $V_m$ and $b_m$ have degree $\chi(\varepsilon_0)$ and $\chi(\varepsilon_m),$ respectively. In such case they both can be considered as DG subalgebras of the DG algebra $\dR_{\mathfrak{F}}(\chi).$
Now Theorem \ref{TwProd} directly implies the following corollary.

\begin{corollary}
The DG algebra $\dR_{\mathfrak{F}}(\chi)$ is isomorphic to the twisted tensor product of the DG algebras $K(V_{m})(\chi(\varepsilon_0))\otimes^{\mathbf{v}}_S K_{1}^{\op}(\chi(\varepsilon_m))$ and
$\dR_{\mathfrak{G}}(\chi')$ over the DG algebra $K(V_{m})(\chi(\varepsilon_0))$
via the twisting map $\mathbf{v}$ defined by formula (\ref{vtwist}), i.e. we have
\[
\dR_{\mathfrak{F}}(\chi)\cong \left( K(V_{m})(\chi(\varepsilon_0))\otimes^{\mathbf{v}}_S K_{1}^{\op}(\chi(\varepsilon_m)) \right)\otimes_{K(V_{m})(\chi(\varepsilon_0))}^{\mathbf{v}} \dR_{\mathfrak{G}}(\chi').
\]
\end{corollary}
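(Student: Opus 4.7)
The plan is to bootstrap directly from Theorem \ref{TwProd}, verifying that the algebra isomorphism $\rho$ constructed there is automatically compatible with the $\ZZ$\!--grading induced by $\chi$ and with the (zero) differentials. Since every DG algebra appearing in the statement carries the trivial differential, the graded Leibniz rule is vacuous, and the DG compatibility condition on the twisting map required by Definition \ref{TwistDGal} reduces to the purely algebraic cocycle identity (\ref{twist}). Thus the only substantive point is to check that $\rho$ is a graded map.

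For this I first recall from Proposition \ref{descr} that $R_{\mathfrak{F}}$ carries a canonical $\ZZ^{m+1}$\!--grading, whose homogeneous components are described by the isomorphisms (\ref{addbas}), and that multiplication in $R_{\mathfrak{F}}$ preserves this grading. Inspecting the proof of Theorem \ref{TwProd}, every direct summand on the left-hand side of $\rho$ was identified with a $\ZZ^{m+1}$\!--homogeneous subspace of $R_{\mathfrak{F}}$ (with the degree determined by the $b_{p_i}$'s and the number of $C$\!--factors). Hence $\rho$ is already a $\ZZ^{m+1}$\!--graded isomorphism of algebras, and post-composing the grading with $\chi:\ZZ^{m+1}\to\ZZ$ turns it into a $\ZZ$\!--graded isomorphism.

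It remains to identify the two factors with the DG algebras in the statement. The restriction of the $\chi$\!--grading to $K(V_m;b_m)\subset R_{\mathfrak{F}}$ places $V_m$ in degree $\chi(\varepsilon_0)$ and $b_m$ in degree $\chi(\varepsilon_m),$ which by definition realizes this subalgebra as $K(V_m)(\chi(\varepsilon_0))\otimes_S^{\mathbf{v}} K_1^{\op}(\chi(\varepsilon_m)).$ Similarly, the restriction of $\chi$ to the subgroup $\ZZ^m\subset\ZZ^{m+1}$ generated by $\varepsilon_0,\dots,\varepsilon_{m-1}$ is $\chi',$ so the subalgebra $R_{\mathfrak{G}}\subset R_{\mathfrak{F}}$ becomes $\dR_{\mathfrak{G}}(\chi').$ Finally, the twisting map $\mathbf{v}$ of formula (\ref{vtwist}) only involves the multiplications and the augmentation $\pi,$ all of which are graded, so it automatically defines a map of graded bimodules of the required form. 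There is no genuine obstacle here beyond careful bookkeeping of the gradings; the entire content of the argument lives in Theorem \ref{TwProd}.
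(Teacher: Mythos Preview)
Your proposal is correct and matches the paper's approach exactly: the paper simply states that Theorem \ref{TwProd} ``directly implies'' the corollary without further argument, and you have written out precisely the bookkeeping (grading compatibility of $\rho$, triviality of the differential condition, identification of the graded subalgebras) that justifies this implication.
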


Applying Theorem \ref{DGfinpr} to the algebras $R_{\mathfrak{F}}$ and to the DG algebras $\dR_{\mathfrak{F}}(\chi),$ we obtain the following statement.

\begin{theorem}\label{smoothtwist}
For any family ${\mathfrak F}$ which satisfies property (G), the algebra $R_{\mathfrak{F}}$ has finite global dimension and  any DG algebra $\dR_{\mathfrak{F}}(\chi)$ is smooth.
\end{theorem}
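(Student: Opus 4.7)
My plan is to prove smoothness of $\dR_{\mathfrak{F}}(\chi)$ for every grading $\chi$ by induction on $m=|\mathfrak{F}|$; the finite global dimension of $R_{\mathfrak{F}}$ then follows by taking $\chi=0$, since smoothness implies regularity (as recalled after Definition~\ref{propdef}, citing \cite{Lu}) and for a finite-dimensional algebra regularity is equivalent to finite global dimension. The base case $m=0$ reduces to the Kronecker algebra $R_{\emptyset}\cong K(C)=K_n$, the path algebra of the acyclic quiver $Q_{n,0}$; such path algebras are hereditary and smooth over $\kk$, and this property persists after equipping them with any $\ZZ$-grading and zero differential.

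For the inductive step, I will use the corollary to Theorem~\ref{TwProd} to present
\[
\dR_{\mathfrak{F}}(\chi)\cong \dA\otimes^{\mathbf{v}}_{K(V_m)(\chi(\varepsilon_0))}\dR_{\mathfrak{G}}(\chi'),\qquad \dA=K(V_m)(\chi(\varepsilon_0))\otimes^{\mathbf{v}}_S K_1^{\op}(\chi(\varepsilon_m)),
\]
and apply Theorem~\ref{DGfinpr} to conclude. Of its hypotheses, smoothness of $\dR_{\mathfrak{G}}(\chi')$ is the induction hypothesis; the canonical augmentation $\dA\to K(V_m)(\chi(\varepsilon_0))$ sending $b_m\mapsto 0$ is built in; and semi-projectivity of $\dR_{\mathfrak{G}}(\chi')$ as a left DG module over $K(V_m)(\chi(\varepsilon_0))$ is immediate from Proposition~\ref{RleftK} together with the triviality of the differentials (projectivity on the underlying graded level forces semi-projectivity in the DG sense when $d=0$). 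Smoothness of $\dA$ itself is verified by another application of Theorem~\ref{DGfinpr}, this time to the twisted tensor product of the two hereditary Kronecker algebras $K(V_m)$ and $K_1^{\op}$ over the semisimple ring $S=\kk\times\kk$: both factors are smooth, $K_1^{\op}$ is automatically semi-projective over $S$, and the augmentation ideal $V_m\subset K(V_m)$ visibly sits inside the radical of $\dA$.

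The main obstacle is the external-radical condition in the outer step: I must show that the image of $\mI_{\dA}\otimes_{K(V_m)}\dR_{\mathfrak{G}}(\chi')$ in $\dR_{\mathfrak{F}}(\chi)$ is contained in the external radical. Under the explicit identification constructed in the proof of Theorem~\ref{TwProd}, this image is precisely the two-sided ideal spanned by the graded components $(R_{\mathfrak{F}})_{(u;P)}$ with $m\in P$, i.e.\ by all paths passing through $b_m$. Since $R_{\mathfrak{F}}$ is a finite-dimensional quotient of $\kk Q_{n,m}$ by an admissible ideal (all relations in Definition~\ref{defalg} lie in the square of the arrow ideal), its Jacobson radical coincides with the image of the arrow ideal and hence contains $b_m$ together with the entire ideal in question. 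Because the differential is zero, the external radical of $\dR_{\mathfrak{F}}(\chi)$ equals the ordinary Jacobson radical, so the hypothesis is met, Theorem~\ref{DGfinpr} applies, and the induction is complete.
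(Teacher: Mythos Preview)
Your proof is correct and follows essentially the same route as the paper: induction on $m$ via the twisted tensor product decomposition of Theorem~\ref{TwProd} (and its corollary), then invoking Theorem~\ref{DGfinpr}. The paper compresses all of this into a single sentence, while you have spelled out the verification of each hypothesis --- the base case, semi-projectivity from Proposition~\ref{RleftK}, smoothness of $\dA$ via a second use of Theorem~\ref{DGfinpr} over $S$, and the external-radical condition via the identification of $\mI_{\dA}\otimes_{K(V_m)}\dR_{\mathfrak G}(\chi')$ with the graded components indexed by $P\ni m$ from the proof of Theorem~\ref{TwProd}.
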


\section{Some particular cases and known examples}

\subsection{Green's algebras}
In \cite{G} E.L.~Green gave an example of a series of
finite-dimensional algebras, which we denote by $G_l, l\ge 0,$ having precisely two isomorphism classes of simple
modules and being of global dimension $l$ (see also \cite{Ha}). In particular, this example showed that a values of the global
dimension cannot only depend on the number of simple modules. Besides, it was proved in \cite{Ha, MH} that the triangulated categories $\prf G_l\cong D^{b}(\mod-G_{l})$ do not have full exceptional collections in the case when $l\ge 3.$ Now we show that the algebras $G_l$ provide a special case of algebras of type $R_{\mathfrak F}$ described in the previous sections.

Let us recall a definition of the algebras $G_l$ in terms of quiver algebras.
Take the quivers with two vertices  $Q_{n+1,n}$ and $Q_{n,n}$  as in (\ref{TwoVQ}) and
define algebras $G_{2n+1}$ and $G_{2n}$ as the quotient algebras $\kk Q_{n+1,n}/I_{2n+1}$ and $\kk Q_{n,n}/I_{2n},$
where the ideals  $I_{2n+1}$ and $I_{2n}$ are generated by the following elements:
\begin{flalign}\label{Green}
\begin{array}{rll}
\mathrm{i)} & b_ic_j & \text{when}\quad i<j;\\
\mathrm{ii)} & c_jb_i & \text{when}\quad j\le i.\\
\end{array}&&
\end{flalign}
Note that for $l=0,1$ we have $G_0=S=\kk\times\kk$ and $G_1=K_1=\kk Q_{1,0}.$

At first, it can be shown that any algebra $G_l$ can be realized as an iterated twisted tensor product of  algebras of type $K_1.$
More precisely, it is easy to see that
for any $n\ge 1$ there are the following isomorphisms of algebras:
\[
G_{2n+1}\cong K_1 \otimes^{\mathbf{v}}_S G_{2n} \quad\text{and}\quad G_{2n}\cong K_1^{\op}\otimes^{\mathbf{v}}_S G_{2n-1},
\]
where as above $K_1=\kk Q_{1,0}$ and $K_1^{\op}=\kk Q_{0,1}.$
A proof goes by induction, where the first factors $K_1$ and $K_1^{\op}$ are related to the arrows $c_{n+1}$ and $b_n,$ respectively.

On the other hand, for any $l$ each algebra $G_{l}$ can be described as an algebra of the form $R_{\mathfrak F}$ for some special family $\mathfrak{F}.$
Indeed, let $n=[l/2].$
The dimension of the vector space $C$ is equal to $n,$ when $l=2n,$ or it is equal to $n+1,$ when $l=2n+1.$ In any case we have that the dimension of
$C$ is equal to $[(l+1)/2].$

Now we put $W_i=\langle c_1,\dots, c_i\rangle\subseteq C$ and $V_i=\langle c_{i+1},\dots \rangle\subset C$ for any $i=1,\dots, n.$
It is easy to see that relation i) of (\ref{Green}) coincides with relation 2) of Definition \ref{defalg}, while
relation ii) of (\ref{Green})  is exactly relation 3) of Definition \ref{defalg}.
We also need to check that 1) of Definition \ref{defalg} holds.
It is evident that  $V_i+W_j=C,$ when $i\le j.$ Hence,
$b_icb_j=0$ in $G_l$ for any $c\in C,$ when $i\le j.$ Thus, the algebra $G_l$ is isomorphic to $R_{\mathfrak F}$ for the family
$\mathfrak{F}=\{V_1, W_1;\dots; V_n,W_n\},$ where $n=[l/2]$ and the subspaces $V_i,\; W_i$ as above. Finally, it can be noted that property (G) also holds.

\begin{proposition}
Any Green algebra $G_l$ is an algebra of type $R_{\mathfrak F}$ for a family ${\mathfrak F}$ described above.
\end{proposition}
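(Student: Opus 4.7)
The plan is to exhibit the explicit family $\mathfrak F$ by reading off data from the Green quiver, just as the paragraph preceding the proposition indicates. Set $n=[l/2]$, so that $\dim C=[(l+1)/2]$ in either parity, and identify $C$ with the span of the arrows from the vertex $\bone$ to $\btwo$. For $i=1,\ldots,n$ put $W_i=\langle c_1,\ldots,c_i\rangle$ and $V_i=\langle c_{i+1},\ldots\rangle$; these have the dimension and codimension required by Definition \ref{defalg}, with $\mathfrak{k}=\{n,n-1,\ldots,1\}$ (possibly shifted by one in the odd case). Property (G) is immediate from this flag-like choice: whenever $i\ge j$ the subspaces $W_j\subseteq\langle c_1,\ldots,c_j\rangle$ and $V_i=\langle c_{i+1},\ldots\rangle$ lie in complementary coordinate subspaces, so $V_i\cap W_j=0$.

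Next I would match the two sets of relations. Relation 2) of Definition \ref{defalg}, $b_iv=0$ for $v\in V_i$, becomes $b_ic_k=0$ for $k>i$, which is exactly relation i) of \eqref{Green}; relation 3) becomes $c_kb_i=0$ for $k\le i$, which is exactly relation ii). So the ideal $I_{\mathfrak F}$ contains the Green ideal $I_l$ and the two quotient algebras receive comparable presentations.

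The only step requiring verification is that relation 1) of Definition \ref{defalg} is \emph{redundant} in the Green presentation, i.e.\ that no extra quotient is being taken. This is where the condition $i\le j$ becomes useful: for such pairs $V_i+W_j=C$, since $W_j\supseteq\langle c_1,\ldots,c_i\rangle$ while $V_i=\langle c_{i+1},\ldots\rangle$. Decomposing any $c\in C$ as $c=v+w$ with $v\in V_i$ and $w\in W_j$ yields $b_icb_j=b_i v\,b_j+b_i\,w b_j=0$ by i) and ii). Therefore $I_l$ and $I_{\mathfrak F}$ coincide, and $G_l\cong R_{\mathfrak F}$. I expect no serious obstacle; the only conceptual content is the decomposition in this last paragraph, everything else is bookkeeping against Definition \ref{defalg}.
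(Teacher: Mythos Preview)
Your proposal is correct and follows essentially the same argument as the paper: the paper's proof is contained in the paragraph immediately preceding the proposition, and it defines the same flag-type family $V_i,W_i$, matches relations i) and ii) with 2) and 3), and then derives relation 1) from $V_i+W_j=C$ for $i\le j$ exactly as you do. The only (harmless) slip is the parenthetical about $\mathfrak{k}$: the sequence $\{n,n-1,\ldots,1\}$ is the odd case $l=2n+1$, and it is the \emph{even} case that is shifted down by one to $\{n-1,\ldots,0\}$.
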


We can also consider a generalization of Green algebras to the case of $N$ simple modules.
We fix a natural number $N\ge 2$ and consider the semisimple algebra $S=S_N=\underbrace{\kk\times\dots\times\kk}_N.$

Denote by $K_{ij}[d]$ for $1\le i\ne j\le N$  the finite-dimensional DG algebras with the semisimple part equal to $S=S_N$ and with only one arrow  from vertex
$i$ to $j$ of degree $d$ in the Jacobson radical.
Taking iterated twisted tensor product over $S=S_N$  of such DG algebras  with the twisting map given by formula (\ref{vtwist}), we obtain
new DG algebras, which will be called {\sf generalized Green DG algebras on $N$ vertices}.
By Definition \ref{TwistDGal} of the twisted tensor product of DG algebras,  any such generalized Green DG algebra has a trivial differential.
The next proposition  directly follows from Theorem \ref{DGfinpr}.

\begin{proposition}\label{GreenDG}
Any generalized Green DG algebra is smooth.
\end{proposition}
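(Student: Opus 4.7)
The plan is to prove smoothness of any generalized Green DG algebra $\dC$ by induction on the number $k$ of elementary factors $K_{i_\ell j_\ell}[d_\ell]$ appearing in its presentation as an iterated twisted tensor product, with Theorem~\ref{DGfinpr} as the engine of the inductive step. The base case $k=0$ is just $\dC = S = S_N$, which is smooth because $S$ is separable over $\kk$, so it is projective as a module over $S\otimes_\kk S^{\op}$; equivalently, one may start at $k=1$ by observing that a single $K_{ij}[d]$ is a finite-dimensional DG algebra with trivial differential and separable semisimple part, given by the path algebra of a one-arrow quiver with no relations, hence smooth.

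For the inductive step, a generalized Green DG algebra with $k$ factors can, by associativity of the iterated construction, be written as $\dC = \dA \otimes_S^{\mathbf{v}} \dB$ with one of $\dA, \dB$ a single $K_{ij}[d]$ and the other a generalized Green DG algebra with $k-1$ factors, smooth by induction. I always take $\dA$ to be the left-hand factor. The augmentation hypothesis on $\dA$ is automatic: an elementary $K_{ij}[d]$ augments to $S$ by killing its arrow, while every iterated $\mathbf{v}$-twisted tensor product of $S$-augmented $S$-rings inherits an augmentation to $S$. The hypothesis that $\dB$ be semi-projective as a left $S$-module is automatic since $S$ is semisimple, so every $S$-module is projective. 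Both factors are finite-dimensional and smooth, so it remains only to check the radical condition $\mI_{\dA}\otimes_S\dB \subseteq (\mJ_{\dC})_+$.

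Since every generalized Green DG algebra has trivial differential by Definition~\ref{TwistDGal}, the external radical $(\mJ_{\dC})_+$ reduces to the ordinary Jacobson radical $\mJ_{\dC}$. Using the explicit form~(\ref{vtwist}) of $\mathbf{v}$, whenever $a \in \mI_{\dA} = \Ker\pi_{\dA}$ one has $\pi_{\dA}(a)=0$, so the last two terms of~(\ref{vtwist}) vanish and
\[
\mathbf{v}(b\otimes a) = \epsilon_{\dA}(\pi_{\dB}(b))\cdot a \otimes 1 \in \mI_{\dA}\otimes_S \dB.
\]
Iterating this in the product on $\dC$ gives $(\mI_{\dA}\otimes_S\dB)^n \subseteq \mI_{\dA}^n \otimes_S \dB$, which vanishes for $n$ large since the augmentation ideal $\mI_{\dA}$ of the finite-dimensional algebra $\dA$ is nilpotent. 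Thus $\mI_{\dA}\otimes_S\dB$ is a nilpotent two-sided ideal of $\dC$ and lies in $\mJ_{\dC} = (\mJ_{\dC})_+$. Theorem~\ref{DGfinpr} then applies and yields that $\dC$ is smooth, closing the induction. The only subtle point is the nilpotency check, but it is forced by the very shape of $\mathbf{v}$: the augmentation ideal on one side is annihilated by $\pi$ from the other side, preventing the twist from ever producing contributions outside $\mI_{\dA}\otimes_S\dB$.
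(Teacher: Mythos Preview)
Your proof is correct and follows the same route as the paper, which simply records that the statement ``directly follows from Theorem~\ref{DGfinpr}'' and gives no further details; you have spelled out the induction and the verification of the hypotheses that the paper leaves implicit. One phrasing is slightly loose: the assertion ``the augmentation ideal $\mI_{\dA}$ of the finite-dimensional algebra $\dA$ is nilpotent'' is not a general fact about augmented finite-dimensional algebras, but it holds here because for each $K_{ij}[d]$, and inductively for every generalized Green DG algebra, the augmentation ideal coincides with the Jacobson radical (the quotient by it is the semisimple $S$); alternatively, simply peel the leftmost factor so that $\dA=K_{ij}[d]$ has $\mI_{\dA}^2=0$. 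With that clarification the argument is complete.
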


\subsection{Kirkman--Kuzmanovich algebras}

Another example of series of algebras with two simple modules is due to E. Kirkman and J. Kuzmanovich (see \cite{KK}). They constructed  finite-dimensional algebras, which will be  denoted by  $KK_n,\; n\ge 1,$ with $(\rad KK_n)^4=0$ for any $n.$
In \cite{KK} it was shown that the global dimension of these algebras is equal to $2n+1,$ when $n\ge 2.$ Thus, this example demonstrates that a bound on the global
dimension of a finite-dimensional algebra cannot depend only on the Loewy length of this algebra.

The algebras $KK_n, \; n\ge 1$ are defined as the quotient algebras $\kk Q_{n,n}/J_{n},$
where the ideals  $J_n$ are generated by the following list of elements:
\begin{flalign}\label{KirKuz}
\begin{array}{rlll}
\mathrm{i)} & b_ic b_j & \text{for any}& c\in C\quad \text{and all}\quad 1\le i,j\le n;\\
\mathrm{ii)} & b_ic_i & \text{for any} &1\le i\le n;\\
\mathrm{iii)} & c_jb_i & \text{when} & j< i;\\
\mathrm{iv)} & (c_jb_i-c_i b_i) & \text{when}& j> i.
\end{array}&&
\end{flalign}

The algebras $KK_n$ can also be presented as algebras of the form $R_{\mathfrak{F}}$ for some families ${\mathfrak F}=\{V_1, W_1,\dots, V_n,W_n\}.$
We put $V_i=\langle c_{i}\rangle\subset C$ and  $W_i=\langle c_1,\dots,c_{i-1},c_{i+1}-c_i,\dots, c_n-c_i\rangle\subset C$
for any $i=1,\dots, n.$ Thus, in this case, we have $m=n$ and all $k_i$ are equal to $1.$

It is evident that relation ii) of (\ref{KirKuz}) is equivalent to relation 2) of Definition \ref{defalg}.
Similarly, relations iii) and iv) together are equivalent to 3) of Definition \ref{defalg} with $W_i=\langle c_1,\dots,c_{i-1},c_{i+1}-c_i,\dots, c_n-c_i\rangle$
introduced above.
We can see that property (G) holds for the vector subspaces $V_i,\; W_j,$ i.e. $V_i\cap W_j=0$ when $i\ge j.$
Finally, property i) of (\ref{KirKuz})  implies relation 1) of Definition \ref{defalg}.
In the opposite direction, relation 1) of Definition \ref{defalg} implies property i) of (\ref{KirKuz}) for $i\le j.$ In the case $i>j$ relation i) of (\ref{KirKuz})  directly follows from
ii)--iv), because $V_i+ W_j=C$ when $i\ge  j.$

Thus, we obtain the following proposition.

\begin{proposition}
Any Kirkman--Kuzmanovich algebra $KK_n$ is an algebra of type $R_{\mathfrak F}$ for a family ${\mathfrak F}$ described above.
\end{proposition}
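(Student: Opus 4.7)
The plan is to prove the equality of two-sided ideals $I_{\mathfrak{F}} = J_n$ in $\kk Q_{n,n}$ for the family $\mathfrak{F} = \{V_1, W_1; \ldots; V_n, W_n\}$ with $m = n$, all $k_i = 1$, $V_i = \langle c_i \rangle$ and $W_i = \langle c_1, \ldots, c_{i-1}, c_{i+1} - c_i, \ldots, c_n - c_i \rangle$, together with verifying that property (G) holds for this family.

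First I would check property (G). Since each $V_i$ is one-dimensional, it suffices to show $c_i \notin W_j$ whenever $i \geq j$. The case $i = j$ is immediate from the definition of $W_j$. For $i > j$, writing an arbitrary element of $W_j$ as $\sum_{l < j} \beta_l c_l + \sum_{k > j} \alpha_k (c_k - c_j)$, an equality with $c_i$ would force $\alpha_i = 1$ and all other $\alpha_k$ and all $\beta_l$ equal to zero; but then the coefficient of $c_j$ would be $-1 \neq 0$, a contradiction.

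Next I would match the relations pairwise. Relation (ii) of (\ref{KirKuz}) reads $b_i c_i = 0$, which is precisely relation 2) of Definition \ref{defalg} for $V_i = \langle c_i \rangle$. Relations (iii) and (iv) taken together assert $c_j b_i = 0$ for $j < i$ and $(c_j - c_i) b_i = 0$ for $j > i$; a short linear-independence computation shows that $\{c_1, \ldots, c_{i-1}, c_{i+1} - c_i, \ldots, c_n - c_i\}$ is a basis of $W_i$, so these relations are equivalent to relation 3) of Definition \ref{defalg}.

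The main obstacle is reconciling relation (i) of (\ref{KirKuz}), which demands $b_i c b_j = 0$ for \emph{all} $i, j$, with relation 1) of Definition \ref{defalg}, which only imposes this for $i \leq j$. In the case $i \leq j$ the two conditions coincide. In the case $i > j$ I would exploit the fact that all $k_i$ are equal to $1$: property (G) then gives $\dim(V_i + W_j) = \dim V_i + \dim W_j = 1 + (n-1) = n$, so $V_i + W_j = C$. Thus any $c \in C$ decomposes as $c = v + w$ with $v \in V_i$, $w \in W_j$, yielding $b_i c b_j = b_i v b_j + b_i w b_j = 0$ by relations (ii) and (iii)/(iv). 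This gives $I_{\mathfrak{F}} = J_n$ and therefore $KK_n \cong R_{\mathfrak{F}}$.
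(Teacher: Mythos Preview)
Your proof is correct and follows essentially the same route as the paper: matching relations (ii) with 2), relations (iii)--(iv) with 3), and then showing that relation (i) for $i>j$ is redundant because $V_i+W_j=C$ (a consequence of property (G) and all $k_i=1$). The only difference is that you verify property (G) explicitly, whereas the paper simply asserts it.
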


\section{Equidimensional families and embeddings to categories with exceptional collection}\label{EqdimF}

\subsection{Equidimensional families and graded quivers}
In this section we  consider in more detail DG algebras $\dR_{\mathfrak F}(\chi)$ that are related to families ${\mathfrak F}$ for which all vector subspaces $V_i\subseteq C, \; i=1,\dots, m$ have the same dimension equal to $k.$
In this case we say that the family $\mathfrak{F}$ is equidimensional.

Recall that a graded quiver is a usual quiver  $Q=(\vrt, \Arr, s, t)$ with a given $\ZZ$\!--grading on the set of all arrows. Thus,  we add some map $\Delta:\Arr\to\ZZ$ to a usual quiver $Q.$
A graded quiver with relation $(Q,I,\Delta)$ is a usual quiver $Q$ with $\ZZ$\!--grading $\Delta$ such that the ideal of relations $I$ is a graded ideal.
In this case the path algebra $\kk Q/I$ is also  $\ZZ$\!--graded.

Let $\overline{\delta}=\{\delta_1,\dots, \delta_m\}$ be a sequence of integers. Let us for each vector subspace $V_i\subseteq C$ choose a basis $\{v_{i1},\dots, v_{ik}\}.$
We introduce in consideration a graded quiver with relations $(\Phi_{\mathfrak{F}}(\overline{\delta}), J_{\mathfrak F}),$ where
the graded quiver $\Phi_{\mathfrak{F}}(\overline{\delta})$ has $m+2$ vertices $\bone,\btwo, {\mathbf l_1},\dots, {\mathbf l_m}$ and the following set of arrows:
\begin{flalign*}
\begin{array}{rl}
\mathrm{a1)} & \text{$n$ arrows $\{c_1,\dots, c_n\}$ of degree $0$ from $\bone$ to $\btwo,$} \\
\mathrm{a2)} & \text{one arrow $\beta_i$ of degree $0$ from $\btwo$ to ${\mathbf l_i}$ for each $i=1,\dots, m,$}\\
\mathrm{a3)} & \text{$k$ arrows $\phi_{i1},\dots, \phi_{ik}$ of degree $\delta_i$ from $\btwo$ to ${\mathbf l_i}$ for each $i=1,\dots, m,$}\\
%4) & $n-k_i$ arrows $w_{1i},\dots, w_{(n-k_i) i}$ of degree $0$ from $\bone$ to ${\mathbf e_i}$ for each $i=1,\dots, m;$\\
%5) & One arrow $\gamma_i$ of degree $\delta_i$ from $\bone$ to ${\mathbf e_i}$ for each $i=1,\dots, m.$\\
\end{array}&&
\end{flalign*}
while the ideal of relations $J_{\mathfrak{F}}$ is  generated by the following elements:
\begin{flalign*}
\begin{array}{rlll}
\mathrm{r1)} & \beta_i v & \text{for all}\quad v\in V_i\subseteq C,& \text{where}\quad i=1,\dots, m,\\
\mathrm{r2)} & \phi_{ij} w & \text{for all}\quad w\in W_i\subseteq C,& \text{where}\quad i=1,\dots, m,\quad 1\le j\le k,\\
\mathrm{r3)} & \phi_{ij} v_{il} & \text{for all}\quad v_{il}\in V_i\;\; \text{such that}\; l\ne j,& \text{where}\quad i=1,\dots, m,\quad 1\le j\le k,\\
\mathrm{r4)} & \phi_{ij} v_{ij}-\phi_{i1}v_{i1} & \text{for all}\quad v_{ij}\in V_i, & \text{where}\quad i=1,\dots, m,\quad 1< j\le k.\\
\end{array}&&
\end{flalign*}
\smallskip

Relations r2)--r4) imply that the vector space generated by arrows $\{\phi_{i1},\dots, \phi_{ik}\}$ is canonically isomorphic to the dual vector space $V_i^{\vee}$ for every $i,$ and, moreover, the subspace of relations $Z_i$ in the tensor product $V_i^{\vee}\otimes C$ is a direct sum of $V_i\otimes W_i$ and the subspace $\Ad(V_i)\subset V_{i}^{\vee}\otimes V_i\cong \End(V_i)$ of codimension one consisting of all traceless tensors.

Let us consider the DG algebra of the DG quiver with relations $(\Phi_{\mathfrak{F}}(\overline{\delta}), J_{\mathfrak F}),$ i.e.  the quotient DG algebra $\dD_{\mathfrak{F}}(\overline{\delta})=\kk \Phi_{\mathfrak{F}}(\overline{\delta})/J_{\mathfrak F},$ where $\kk \Phi_{\mathfrak{F}}(\overline{\delta})$ is the path DG algebra of the DG quiver $\Phi_{\mathfrak{F}}(\overline{\delta}).$
The DG algebra $\dD_{\mathfrak{F}}(\overline{\delta})$ as the right DG module is isomorphic to a direct sum of $m+2$ semi-projective DG modules $\mQ_{\bone}=e_{\bone}\dD_{\mathfrak{F}}(\overline{\delta}),$ $\mQ_{\btwo}=e_{\btwo}\dD_{\mathfrak{F}}(\overline{\delta})$ and $\mQ_{\mathbf l_i}=e_{\mathbf l_i}\dD_{\mathfrak{F}}(\overline{\delta}),\; i=1,\dots, m.$
The following proposition is a direct consequence of the definition of the DG algebra $\dD_{\mathfrak{F}}(\overline{\delta}).$

\begin{proposition}\label{excol}
The DG $\dD_{\mathfrak{F}}(\overline{\delta})$\!--modules $\mQ_{\bone}=e_{\bone}\dD_{\mathfrak{F}}(\overline{\delta}),$ $\mQ_{\btwo}=e_{\btwo}\dD_{\mathfrak{F}}(\overline{\delta})$ and $\mQ_{\mathbf l_i}=e_{\mathbf l_i}\dD_{\mathfrak{F}}(\overline{\delta}),\; i=1,\dots, m,$ form a full exceptional collection in the triangulated category $\prf\dD_{\mathfrak{F}}(\overline{\delta})$
such that the objects $\mQ_{\mathbf l_i},\; i=1\dots, m,$ are completely orthogonal.
\end{proposition}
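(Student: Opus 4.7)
The plan is to exploit the fact that the quiver $\Phi_{\mathfrak{F}}(\overline{\delta})$ is acyclic with a very transparent directionality: all arrows go either from $\bone$ to $\btwo$, or from $\btwo$ to some $\mathbf{l}_i$. In particular, no arrow leaves any vertex $\mathbf{l}_i$, and no arrow lands at $\bone$. Consequently, for two vertices $v$ and $w$, the space $e_w\dD_{\mathfrak{F}}(\overline{\delta})e_v$ consists of paths from $v$ to $w$, and such paths can only exist when $v,w$ sit in the partial order $\bone\prec\btwo\prec\mathbf{l}_i$ (with the $\mathbf{l}_i$'s pairwise incomparable). Since $\dD_{\mathfrak{F}}(\overline{\delta})$ carries the zero differential, we have
\[
\Hom^{\bullet}_{\dD_{\mathfrak{F}}(\overline{\delta})}(\mQ_v,\mQ_w)\;\cong\; e_w\,\dD_{\mathfrak{F}}(\overline{\delta})\,e_v
\]
as a graded vector space, and this will be the only computation required.

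First I would verify that each $\mQ_v$ is exceptional. For $v\in\{\bone,\btwo,\mathbf{l}_1,\dots,\mathbf{l}_m\}$, no nontrivial closed path begins and ends at $v$ (the quiver has no oriented cycles), so $e_v\dD_{\mathfrak{F}}(\overline{\delta})e_v=\kk\cdot e_v$, placed in degree $0$. Next I would check the exceptionality of the ordered sequence $\mQ_\bone,\mQ_\btwo,\mQ_{\mathbf{l}_1},\dots,\mQ_{\mathbf{l}_m}$: for any pair $(v,w)$ with $w$ appearing strictly before $v$ in this order, there is no path from $v$ to $w$ in $\Phi_{\mathfrak{F}}(\overline{\delta})$, so $e_w\dD_{\mathfrak{F}}(\overline{\delta})e_v=0$. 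Finally, for $i\neq j$, there are no paths between $\mathbf{l}_i$ and $\mathbf{l}_j$ in either direction, so $\Hom^{\bullet}(\mQ_{\mathbf{l}_i},\mQ_{\mathbf{l}_j})=0$, giving the claimed complete orthogonality of the subcollection $\{\mQ_{\mathbf{l}_i}\}_{i=1}^m$.

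For fullness, I would use that the DG algebra itself is a classical generator of $\prf\dD_{\mathfrak{F}}(\overline{\delta})$. Since
\[
\dD_{\mathfrak{F}}(\overline{\delta})\;=\;\mQ_\bone\oplus\mQ_\btwo\oplus\bigoplus_{i=1}^{m}\mQ_{\mathbf{l}_i}
\]
as a right DG module, the smallest full triangulated subcategory closed under direct summands containing all $\mQ_v$ contains $\dD_{\mathfrak{F}}(\overline{\delta})$, hence coincides with $\prf\dD_{\mathfrak{F}}(\overline{\delta})$. This establishes that the exceptional collection is full.

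There is no real obstacle here: the content of the relations r1)--r4) enters only in computing the mixed spaces $e_{\mathbf{l}_i}\dD_{\mathfrak{F}}(\overline{\delta})e_{\bone}$ and $e_{\mathbf{l}_i}\dD_{\mathfrak{F}}(\overline{\delta})e_{\btwo}$, and plays no role for the statement of the proposition. The only subtlety worth spelling out in the write-up is that semi-freeness (hence perfectness) of each $\mQ_v=e_v\dD_{\mathfrak{F}}(\overline{\delta})$ is automatic from the direct sum decomposition of $\dD_{\mathfrak{F}}(\overline{\delta})$ into projective summands, so that the objects indeed belong to $\prf\dD_{\mathfrak{F}}(\overline{\delta})$.
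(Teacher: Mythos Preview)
Your proposal is correct and is exactly what the paper has in mind: the paper offers no proof beyond the remark that the statement is ``a direct consequence of the definition of the DG algebra $\dD_{\mathfrak{F}}(\overline{\delta})$,'' and your argument simply spells out this direct consequence via the acyclicity of the quiver and the decomposition $\dD_{\mathfrak{F}}(\overline{\delta})=\bigoplus_v \mQ_v$.
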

Furthermore, the following proposition provides an explicit   description of the structure of the DG algebra $\dD_{\mathfrak{F}}(\overline{\delta}).$
\begin{proposition}\label{excolhom}
All nonvanishing complexes of homomorphisms between semi-projective DG modules $\mQ_{\bone}=e_{\bone}\dD_{\mathfrak{F}}(\overline{\delta}),\; \mQ_{\btwo}=e_{\btwo}\dD_{\mathfrak{F}}(\overline{\delta})$ and $\mQ_{\mathbf l_i}=e_{\mathbf l_i}\dD_{\mathfrak{F}}(\overline{\delta}),\; i=1,\dots, m$ in the DG category
$\Mod\dD_{\mathfrak{F}}(\overline{\delta})$ can be described as follows:
\begin{flalign*}
\begin{array}{rlll}
\mathrm{(1)} & \dHom_{\dD_{\mathfrak{F}}(\overline{\delta})}(\mQ_{a}, \mQ_{a})\cong \kk & \text{for any}\quad a=\bone, \btwo,{\mathbf l_1},\dots, {\mathbf l_m};\\
\mathrm{(2)} & \dHom_{\dD_{\mathfrak{F}}(\overline{\delta})}(\mQ_{\bone}, \mQ_{\btwo})\cong C; &\\
\mathrm{(3)} & \dHom_{\dD_{\mathfrak{F}}(\overline{\delta})}(\mQ_{\btwo}, \mQ_{\mathbf l_i})\cong \kk\oplus V_i^{\vee}[-\delta_i]& \text{for any}\quad i=1,\dots, m;\\
\mathrm{(4)} & \dHom_{\dD_{\mathfrak{F}}(\overline{\delta})}(\mQ_{\bone}, \mQ_{\mathbf l_i})\cong W_i\oplus \kk[-\delta_i] & \text{for any}\quad i=1,\dots, m.\\
\end{array}&&
\end{flalign*}
All nontrivial compositions are obtained from the canonical projections
$C\lto W_i$ and canonical maps $V_i^{\vee}\otimes C\to V_i^{\vee}\otimes V_i\to \kk$ for all $i=1,\dots, m$ (see Fig. \ref{quiver1}).
\end{proposition}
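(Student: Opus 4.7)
The plan is to reduce every Hom-complex to a calculation inside the path algebra. For the right DG modules $\mQ_a=e_a\,\dD_{\mathfrak{F}}(\overline{\delta})$ one has the canonical identification
\[
\dHom_{\dD_{\mathfrak{F}}(\overline{\delta})}(\mQ_a,\mQ_b)\;\cong\; e_b\,\dD_{\mathfrak{F}}(\overline{\delta})\,e_a,
\]
and because $\dD_{\mathfrak{F}}(\overline{\delta})$ carries the zero differential, these Hom-complexes are concentrated in their underlying graded vector spaces. Thus the whole statement amounts to enumerating paths from $a$ to $b$ in $\Phi_{\mathfrak{F}}(\overline{\delta})$ modulo the ideal $J_{\mathfrak F}$, together with their $\ZZ$\!--grading.

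The structural observation that makes this enumeration finite is that $\Phi_{\mathfrak{F}}(\overline{\delta})$ is a three-tier directed graph: no loops, no arrow terminating at $\bone$, no arrow leaving any $\mathbf l_i$, and no arrow among the $\mathbf l_j$. Every path therefore has length at most two, and every length-two path factors as (arrow $\btwo\to\mathbf l_i$)$\circ$(arrow $\bone\to\btwo$). Cases~(1) and~(2) are then immediate, since only the idempotents appear from a vertex to itself, and the only paths $\bone\to\btwo$ are the $c_j$ with no applicable relation. For case~(3), the length-one paths from $\btwo$ to $\mathbf l_i$ are $\beta_i$ in degree $0$ and $\phi_{i1},\dots,\phi_{ik}$ in degree $\delta_i$, again with no relation active at length one; the identification $\langle\phi_{ij}\rangle\cong V_i^\vee$ is made through the dual basis $\phi_{ij}\leftrightarrow v_{ij}^\vee$, and its canonicity will fall out of the composition computation below.

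The main step is case~(4). The length-two paths $\bone\to\mathbf l_i$ split into two types, $\beta_i c$ in degree $0$ and $\phi_{ij}c$ in degree $\delta_i$ for $c\in C$. For the $\beta_i c$ the relation r1) quotients out $\beta_iV_i$, and the decomposition $C=V_i\oplus W_i$ identifies $C/V_i$ canonically with $W_i$, contributing the summand $W_i$ in degree $0$. For the $\phi_{ij}c$ decompose $c=v+w$ with $v\in V_i$ and $w\in W_i$: r2) kills $\phi_{ij}w$; r3) kills $\phi_{ij}v_{il}$ for $l\ne j$; and r4) identifies every $\phi_{ij}v_{ij}$ with the single element $\omega:=\phi_{i1}v_{i1}$. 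Hence $\phi_{ij}c=v_{ij}^\vee(v)\cdot\omega$, and the whole subspace collapses onto $\kk\omega$ in cohomological degree $\delta_i$. Nonvanishing of $\omega$ in $\dD_{\mathfrak{F}}(\overline{\delta})$ is confirmed by direct inspection of the generators of $J_{\mathfrak F}$, none of which lies in $\kk\omega$. This yields $\dHom(\mQ_\bone,\mQ_{\mathbf l_i})\cong W_i\oplus\kk[-\delta_i]$ as claimed.

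Finally, the composition formulae are read off the path product through these identifications: composing $c\in C$ with $\beta_i$ gives $\beta_ic$, which is the image of $c$ under the projection $C\to W_i$ with kernel $V_i$; composing $c$ with $\phi_{ij}$ gives $v_{ij}^\vee(v)\,\omega$, i.e.\ the evaluation $V_i^\vee\otimes C\to V_i^\vee\otimes V_i\to\kk$ applied to $v_{ij}^\vee\otimes c$. These formulae also pin down retroactively the identification $\langle\phi_{ij}\rangle\cong V_i^\vee$ of case~(3) in a basis-free way. The only delicate step is the bookkeeping in case~(4) to verify that each of r1)–r4) is used in exactly the right place and that the identification in (3) is intrinsic; no deeper obstacle is anticipated.
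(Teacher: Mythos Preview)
Your proposal is correct and follows essentially the same route as the paper's proof: both reduce to the identification $\dHom(\mQ_a,\mQ_b)\cong e_b\,\dD_{\mathfrak F}(\overline\delta)\,e_a$, dispose of (1)--(3) directly from the arrow data, and for (4) split $e_{\mathbf l_i}\dD_{\mathfrak F}(\overline\delta)e_{\bone}$ into its degree-$0$ and degree-$\delta_i$ pieces, using r1) for the first and r2)--r4) for the second. The only cosmetic difference is that the paper packages your degree-$\delta_i$ bookkeeping as the quotient $(V_i^\vee\otimes C)/Z_i$ with $Z_i=(V_i^\vee\otimes W_i)\oplus\Ad(V_i)$, which also makes the nonvanishing of your element $\omega$ transparent (codimension of $Z_i$ is one) without appealing to inspection of generators.
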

\begin{proof}
The isomorphisms (1)--(3) directly follow from  a1)--a3) of the definition of the graded quiver $\Phi_{\mathfrak{F}}.$
Condition r1) implies that $\beta_i V_i=0$ for any $i.$ Therefore, we obtain
\[
\dHom_{\dD_{\mathfrak{F}}(\overline{\delta})}^0(\mQ_{\bone}, \mQ_{\mathbf l_i})\cong C/V_i\cong W_i.
\]
Finally, relations r2)--r4) imply that the subspace  $Z_i=(V_i^{\vee}\otimes W_i)\oplus\Ad(V_i)\subset V_i^{\vee}\otimes C$ belongs to the ideal $I_{\mathfrak{F}}.$ Hence, we have
$
\dHom_{\dD_{\mathfrak{F}}(\overline{\delta})}^{\delta_i}(\mQ_{\bone}, \mQ_{\mathbf l_i})\cong (V_i^{\vee}\otimes C)/Z_i\cong \kk.
$
\end{proof}

\begin{figure}[ht]
\hspace*{0cm}
\xymatrix{
&&&& \bullet\, {\mathbf l_{1}}\\
&&&& \bullet\, {\mathbf l_{2}}\\
\underset{\bone}{\bullet}
\ar@2{->}@/^0.8pc/@<0.9ex>[urrrr]
\textcolor{red}{}
\ar@[red]@{-->}@/^0.8pc/@<0.1ex>[urrrr]
\ar@2{->}@/^1pc/@<1.4ex>[uurrrr]
\textcolor{red}{}
\ar@[red]@{-->}@/^1pc/@<0.6ex>[uurrrr]
\ar@2{->}@/_0.8pc/@<-0.8ex>[drrrr]
\textcolor{red}{}
\ar@[red]@{-->}@/_0.8pc/@<-1.6ex>[drrrr]
\ar@<0.8ex>[rr]
\ar@<0.4ex>[rr]
\ar[rr]
\ar@<-0.4ex>[rr]
\ar@<-0.8ex>[rr] &&
\underset{\btwo}{\bullet}
\ar@<0.9ex>[uurr] |(0.50)\hole|(0.57)\hole
\textcolor{red}{}
\ar@[red]@3{->}@{-->}@<0.1ex>[uurr] |(0.52)\hole|(0.60)\hole
\ar@<-0.1ex>[urr]
\textcolor{red}{}
\ar@[red]@3{->}@{-->}@<-0.9ex>[urr]
\ar@<0.2ex>[drr]
\textcolor{red}{}
\ar@[red]@3{->}@{-->}@<-0.8ex>[drr]&&
\genfrac{}{}{0pt}{}{\vdots}{\vdots}\\
&&&& \bullet\, {\mathbf l_{m}}
}
\caption{The DG algebra $\dD_{\mathfrak{F}}(\overline{\delta})$}
\label{quiver1}
\end{figure}

\subsection{Another semiorthogonal decomposition}
Let us now consider the following semi-projective DG $\dD_{\mathfrak{F}}(\overline{\delta})$\!--modules
\begin{equation}\label{P-mod}
\mP_1=\Cone\left(\mQ_{\bone}\stackrel{\can}{\lto} \bigoplus_{i=1}^{m} \mQ_{\mathbf l_i}[\delta_i]\right), \qquad \mP_2= \Cone\left(\mQ_{\btwo}\stackrel{\can}{\lto} \bigoplus_{i=1}^{m} V_i\otimes \mQ_{\mathbf l_i}[\delta_i]\right).
\end{equation}
This means that there is a filtration on the DG module $\mP_1$ with  $\bigoplus_{i=1}^{m} \mQ_{\mathbf l_i}[\delta_i]\hookrightarrow\mP_1$ as a DG submodule and $\mQ_{\bone}[1]$ as the quotient, while the differential on $\mP_1$ is induced by the canonical map $\can: \mQ_{\bone}\to \bigoplus_{i=1}^{m} \mQ_{\mathbf l_i}[\delta_i].$
Similarly, for the DG module $\mP_2$ we have a filtration with $\bigoplus_{i=1}^{m} V_i\otimes \mQ_{\mathbf l_i}[\delta_i]\hookrightarrow\mP_2$ as a submodule and
$\mQ_{\btwo}[1]$ as the quotient.

For each $i=1,\dots, m,$ we also consider the following  semi-projective DG $\dD_{\mathfrak{F}}(\overline{\delta})$\!--modules
\[
\mK_i= \Tot\left(V_i\otimes \mQ_{\bone}\stackrel{\eta_i}{\lto} \mQ_{\btwo}\stackrel{\beta_i}{\lto} \mQ_{\mathbf l_i}\right),
\]
where $\eta_i$ is the composition of the natural embedding $V_i\otimes \mQ_{\bone}\hookrightarrow C\otimes \mQ_{\bone}$ and the canonical map $\can: C\otimes \mQ_{\bone}\to \mQ_{\btwo}.$
It is easy to see that the composition $\beta_i\eta_i$ is equal to $0.$ Thus, the DG module $\mK_i$ has a filtration with $\mQ_{\mathbf l_i}\subset \mK_i$ as a submodule  and $\Cone(\eta_i)[1]$ as the quotient.
\begin{proposition}
The sequence of DG modules $\{\mK_1,\dots, \mK_m\}$ forms an exceptional collection in the triangulated category $\prf\dD_{\mathfrak{F}}(\overline{\delta}).$ The DG modules $\mP_1$ and  $\mP_2$ belong to the left orthogonal
${}^{\perp}\cS,$ where $\cS\subset\prf\dD_{\mathfrak{F}}(\overline{\delta})$ is the full triangulated subcategory generated by the objects $\mK_1,\dots, \mK_m.$
\end{proposition}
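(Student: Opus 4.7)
The plan is to reduce every required Hom computation to Proposition \ref{excolhom} by iterated use of the two defining triangles of $\mK_j$:
\begin{equation*}
V_j\otimes\mQ_{\bone}\stackrel{\eta_j}{\lto}\mQ_{\btwo}\lto\Cone(\eta_j)\stackrel{+1}{\lto},\qquad \mQ_{\mathbf l_j}\lto\mK_j\lto\Cone(\eta_j)[1]\stackrel{+1}{\lto}.
\end{equation*}

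First I would compute $\dHom(\mQ_a,\mK_i)$ for $a\in\{\bone,\btwo,\mathbf l_j\}$ by applying $\dHom(\mQ_a,-)$ to these triangles. The case $a=\mathbf l_j$ gives $\kk$ if $i=j$ and $0$ otherwise, by the complete orthogonality of the $\mQ_{\mathbf l_*}$. For $a=\bone$ and $a=\btwo$, the resulting complexes involve the fundamental sequence $V_i\hookrightarrow C\twoheadrightarrow W_i$, which is exact in the middle by Property (G): in the equidimensional setting this gives $C=V_i\oplus W_i$. After cancellation one obtains $\dHom(\mQ_{\bone},\mK_i)\simeq\kk[-\delta_i]$ and $\dHom(\mQ_{\btwo},\mK_i)\simeq V_i^{\vee}[-\delta_i]$, with explicit cocycle representatives given by the class of $\phi_{i1}v_{i1}$ and by the classes $\phi_{i1},\dots,\phi_{ik}$, respectively.

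To establish the exceptional collection, I would apply $\dHom(-,\mK_i)$ to the second triangle for $\mK_j$ and first show $\dHom(\Cone(\eta_j),\mK_i)=0$ whenever $j\ge i$. This reduces to analyzing the connecting map $\dHom(\mQ_{\btwo},\mK_i)\to V_j^{\vee}\otimes\dHom(\mQ_{\bone},\mK_i)$, which on cohomology becomes a map $V_i^{\vee}\to V_j^{\vee}$ in internal degree $\delta_i$. Unraveling the compositions $\phi_{il}\cdot v$ via relations \textup{r2)--r4)}, this map is the transpose of the linear projection $V_j\hookrightarrow C=V_i\oplus W_i\twoheadrightarrow V_i$. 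Here equidimensionality is essential: for $j\ge i$, Property (G) gives $V_j\cap W_i=0$, so the projection is an injection between spaces of the common dimension $k$, hence an isomorphism. The resulting isomorphism $V_i^{\vee}\stackrel{\sim}{\to}V_j^{\vee}$ kills both sides, yielding $\dHom(\Cone(\eta_j),\mK_i)=0$. Feeding this into the second triangle gives $\dHom(\mK_j,\mK_i)\cong\dHom(\mQ_{\mathbf l_j},\mK_i)$, which equals $\kk$ if $j=i$ and vanishes for $j>i$.

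For $\mP_1,\mP_2\in{}^{\perp}\cS$, I would apply $\dHom(-,\mK_i)$ to the defining triangles \eqref{P-mod}. Complete orthogonality kills all $j\ne i$ summands on the $\bigoplus_j\mQ_{\mathbf l_j}[\delta_j]$ side, so the connecting morphisms become maps $\kk[-\delta_i]\to\kk[-\delta_i]$ (for $\mP_1$) and $V_i^{\vee}[-\delta_i]\to V_i^{\vee}[-\delta_i]$ (for $\mP_2$). A direct check using the generators from the previous paragraph---that the canonical map $\mQ_{\bone}\to\mQ_{\mathbf l_i}[\delta_i]$ represents the class $\phi_{i1}v_{i1}$, and the canonical map $\mQ_{\btwo}\to V_i\otimes\mQ_{\mathbf l_i}[\delta_i]$ represents $\mathrm{id}_{V_i}\in V_i\otimes V_i^{\vee}$---shows that both connecting maps are isomorphisms, forcing $\dHom(\mP_1,\mK_i)=\dHom(\mP_2,\mK_i)=0$. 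The main obstacle is the explicit identification of the connecting map $V_i^{\vee}\to V_j^{\vee}$ with the transpose of the geometric projection $V_j\to V_i$ along $W_i$; once this identification is in place, the rest is routine bookkeeping with long exact sequences.
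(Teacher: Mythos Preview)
Your proof is correct and follows essentially the same strategy as the paper's. Both arguments reduce every computation to Proposition~\ref{excolhom} and hinge on the same key fact: the map $V_i^{\vee}\to V_j^{\vee}$ dual to the projection $V_j\hookrightarrow C\twoheadrightarrow V_i$ along $W_i$ is an isomorphism for $j\ge i$ by Property~(G) and equidimensionality. The only difference is organizational: the paper writes out the full Hom complex $\dHom(\mK_i,\mK_j)$ as an explicit double complex (displays (\ref{firstmap})--(\ref{firstmapnew})) and reads off its cohomology directly, whereas you peel off the constituent triangles one at a time and run long exact sequences. The paper's direct approach is marginally cleaner in that it avoids having to track cocycle representatives such as $\phi_{i1}v_{i1}$ through several identifications, while your approach makes the role of the two triangles defining $\mK_j$ a bit more transparent; but the mathematical content is identical.
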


\begin{proof}
For any $1\le i,j\le m,$ let us consider the complex $\dHom_{\dD_{\mathfrak{F}}(\overline{\delta})}(\mK_i, \mK_j)$ of morphisms between objects $\mK_i$ and $\mK_j$ in the DG category $\Mod\dD_{\mathfrak{F}}(\overline{\delta})$ of all right DG $\dD_{\mathfrak{F}}(\overline{\delta})$\!--modules.
Since all DG modules $\mK_i$ are semi-projective, the cohomology of the complex $\dHom_{\dD_{\mathfrak{F}}(\overline{\delta})}(\mK_i, \mK_j)$ determine the spaces of morphisms $\Hom(\mK_i, \mK_j[l])$ between these objects in the triangulated category $\prf\dD_{\mathfrak{F}}(\overline{\delta}).$
For any $i,j,$ the complex $\dHom_{\dD_{\mathfrak{F}}(\overline{\delta})}(\mK_i, \mK_j)$ is a direct sum of two complexes; namely, the complex
\begin{multline}\label{firstmap}
V_i^{\vee}\otimes V_j\otimes \dHom_{\dD_{\mathfrak{F}}(\overline{\delta})}^0(\mQ_{\bone}, \mQ_{\bone})\oplus \dHom_{\dD_{\mathfrak{F}}(\overline{\delta})}^0(\mQ_{\btwo}, \mQ_{\btwo})\oplus \dHom_{\dD_{\mathfrak{F}}(\overline{\delta})}^{0}(\mQ_{\mathbf l_i}, \mQ_{\mathbf l_j})\lto\\
\lto V_{i}^{\vee}\otimes \dHom_{\dD_{\mathfrak{F}}(\overline{\delta})}^0(\mQ_{\bone}, \mQ_{\btwo})\oplus
\dHom_{\dD_{\mathfrak{F}}(\overline{\delta})}^0(\mQ_{\btwo}, \mQ_{\mathbf l_j})\lto
 V_i^{\vee}\otimes \dHom_{\dD_{\mathfrak{F}}(\overline{\delta})}^0(\mQ_{\bone}, \mQ_{\mathbf l_j})
\end{multline}
concentrated in degrees $0,1,2$ and the complex
\begin{equation}\label{secondmap}
\dHom_{\dD_{\mathfrak{F}}(\overline{\delta})}^{\delta_j}(\mQ_{\btwo}, \mQ_{\mathbf l_j})\lto V_i^{\vee}\otimes \dHom_{\dD_{\mathfrak{F}}(\overline{\delta})}^{\delta_j}(\mQ_{\bone}, \mQ_{\mathbf l_j}),
\end{equation}
which is concentrated in degrees $\delta_j+1$ and $\delta_j+2.$ The map (\ref{secondmap}) is actually the map $V_j^{\vee}\to V_i^{\vee},$ which is dual to the natural map
$V_i\to C\to V_j.$ The last map is an isomorphism when $V_i\cap W_j=0.$ By assumption (G) it holds for all $i\ge j.$
Thus, the map (\ref{secondmap}) is an isomorphism when $i\ge j.$

Now consider the complex of vector spaces (\ref{firstmap}). It coincides with the complex
\begin{equation}\label{firstmapnew}
(V_i^{\vee}\otimes V_j)\oplus \kk\oplus \dHom_{\dD_{\mathfrak{F}}(\overline{\delta})}^{0}(\mQ_{\mathbf l_i}, \mQ_{\mathbf l_j})\lto (V_{i}^{\vee}\otimes C)\oplus
\kk
\lto V_i^{\vee}\otimes W_j,
\end{equation}
where $V_i^{\vee}\otimes V_j\to V_{i}^{\vee}\otimes C$ is the natural injection and $V_{i}^{\vee}\otimes C\to V_i^{\vee}\otimes W_j$ is the natural projection. It is easy to see that the complex (\ref{firstmapnew}) has only one nontrivial $0$\!--cohomology, which is isomorphic to the vector space
$\dHom_{\dD_{\mathfrak{F}}(\overline{\delta})}^0(\mQ_{\mathbf l_i}, \mQ_{\mathbf l_j}).$ It is actually trivial when $i\ne j$ and it is isomorphic to $\kk$ in the case $i=j.$
Therefore, in the triangulated category $\prf\dD_{\mathfrak{F}}(\overline{\delta})$ objects $\mK_i,\; i=1,\dots, m,$  are exceptional  and form an exceptional collection $\{\mK_1,\dots, \mK_m\}.$

In a similar way we can describe the complexes of vector spaces $\dHom_{\dD_{\mathfrak{F}}(\overline{\delta})}(\mP_i, \mK_j),$ where $i=1,2$ and $j=1,\dots, m.$
Straightforward calculations show that for any $j$ the complex $\dHom_{\dD_{\mathfrak{F}}(\overline{\delta})}(\mP_2, \mK_j)$  is a direct sum of two complexes
\begin{equation}\label{P2K}
\dHom_{\dD_{\mathfrak{F}}(\overline{\delta})}^0(\mQ_{\btwo}, \mQ_{\btwo})\to \dHom_{\dD_{\mathfrak{F}}(\overline{\delta})}^0(\mQ_{\btwo}, \mQ_{\mathbf l_j})\quad \text{and}
\quad
V_{j}^{\vee}\otimes\dHom_{\dD_{\mathfrak{F}}(\overline{\delta})}^{0}(\mQ_{\mathbf l_j}, \mQ_{\mathbf l_j})\to
\dHom_{\dD_{\mathfrak{F}}(\overline{\delta})}^{\delta_j}(\mQ_{\btwo}, \mQ_{\mathbf l_j}),
\end{equation}
which are obviously both acyclic. Hence, the object $\mP_2$ belongs to ${}^{\perp}\cS\subset\prf\dD_{\mathfrak{F}}(\overline{\delta}).$

Finally, let us consider the complexes $\dHom_{\dD_{\mathfrak{F}}(\overline{\delta})}(\mP_1, \mK_j)$ for each $i=1,\dots, m.$ The complex $\dHom_{\dD_{\mathfrak{F}}(\overline{\delta})}(\mP_1, \mK_j)$ is a direct sum of two complexes
\begin{equation}
V_j\lto C\lto W_j\qquad\text{and}\qquad \kk\stackrel{\sim}{\lto}\kk
\end{equation}
concentrated in degrees $-1,0,1$ and $\delta_j, \delta_j+1,$ respectively.
Since both these complexes are acyclic, the DG module $\mP_1$ as the object of the triangulated category $\prf\dD_{\mathfrak{F}}(\overline{\delta})$ belongs to the subcategory ${}^{\perp}\cS\subset\prf\dD_{\mathfrak{F}}(\overline{\delta})$ too.
\end{proof}

\begin{proposition}\label{gencat} The set of the DG $\dD_{\mathfrak{F}}(\overline{\delta})$\!--modules $\mP_1, \mP_2, \mK_1,\dots, \mK_m$ generates the triangulated category $\prf\dD_{\mathfrak{F}}(\overline{\delta}).$
\end{proposition}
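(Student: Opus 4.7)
The plan is to show that the triangulated subcategory $\T \subseteq \prf\dD_{\mathfrak F}(\overline\delta)$ generated by $\mP_1, \mP_2, \mK_1, \ldots, \mK_m$ contains every object of the full exceptional collection $\mQ_{\bone}, \mQ_{\btwo}, \mQ_{\mathbf l_1}, \ldots, \mQ_{\mathbf l_m}$ from Proposition \ref{excol}. The defining triangles $\mQ_{\bone} \to \bigoplus_i \mQ_{\mathbf l_i}[\delta_i] \to \mP_1$ and $\mQ_{\btwo} \to \bigoplus_i V_i \otimes \mQ_{\mathbf l_i}[\delta_i] \to \mP_2$ from \eqref{P-mod} deliver $\mQ_{\bone}, \mQ_{\btwo} \in \T$ as soon as every $\mQ_{\mathbf l_j} \in \T$, and the filtration triangle $\Cone(\eta_j) \to \mQ_{\mathbf l_j} \to \mK_j$ reduces the inclusion $\mQ_{\mathbf l_j} \in \T$ to $\Cone(\eta_j) \in \T$. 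Hence the whole argument concentrates on producing $\Cone(\eta_j) \in \T$, which I would establish by downward induction on $j = m, m-1, \ldots, 1$.

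The key construction is the commutative square
\[
\begin{array}{ccc}
V_j \otimes \mQ_{\bone} & \xrightarrow{\eta_j} & \mQ_{\btwo} \\
V_j \otimes f_1 \big\downarrow & & \big\downarrow f_2 \\
V_j \otimes \bigoplus_i \mQ_{\mathbf l_i}[\delta_i] & \xrightarrow{g_j} & \bigoplus_i V_i \otimes \mQ_{\mathbf l_i}[\delta_i]
\end{array}
\]
where $f_1, f_2$ are the canonical maps used to define $\mP_1, \mP_2$, and $g_j$ is block-diagonal (off-diagonal components vanish by the complete orthogonality of the $\mQ_{\mathbf l_i}$) with $(i,i)$-component induced by the linear map $V_j \hookrightarrow C \xrightarrow{\theta_{\bullet i}} V_i$ tensored with the identity on $\mQ_{\mathbf l_i}[\delta_i]$. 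Commutativity unwinds directly from the relations r2)--r4): for $v \in V_j$, both routes carry $v \otimes x$ to $\sum_i \theta_{\bullet i}(v) \otimes \phi_{i1} v_{i1}\cdot x$, where $\phi_{i1} v_{i1}$ generates $\dHom^{\delta_i}(\mQ_{\bone}, \mQ_{\mathbf l_i}) \cong \kk$.

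Feeding this square into the $3{\times}3$ octahedron produces the triangle
\[
\Cone(\eta_j) \lto \Cone(g_j) \lto \Cone\bigl(V_j \otimes \mP_1 \to \mP_2\bigr),
\]
whose third term belongs to $\T$ automatically since $V_j \otimes \mP_1, \mP_2 \in \T$. Moreover $\Cone(g_j) = \bigoplus_i \Cone(V_j \to V_i) \otimes \mQ_{\mathbf l_i}[\delta_i]$, and by Property (G) the map $V_j \to V_i$ (whose kernel is $V_j \cap W_i$) is an isomorphism whenever $i \le j$, so only the summands with $i > j$ survive. Thus $\Cone(g_j)$ lies in the triangulated subcategory generated by $\{\mQ_{\mathbf l_i} : i > j\}$, which is trivial at the base case $j = m$ and is contained in $\T$ by the inductive hypothesis for smaller $j$. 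This forces $\Cone(\eta_j) \in \T$, closing the induction.

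The main obstacle is verifying the commutativity of the displayed square: a concrete but somewhat intricate calculation with the canonical generator $\phi_{i1} v_{i1}$ and the tautological element $\sum_\alpha v_{i\alpha} \otimes \phi_{i\alpha}$ realizing $\id_{V_i} \in V_i \otimes V_i^{\vee} \cong \dHom^{\delta_i}(\mQ_{\btwo}, V_i \otimes \mQ_{\mathbf l_i})$, together with relations r2)--r4) to evaluate $\phi_{i\alpha}\cdot v$ for $v \in V_j$. Once this square is in place, the octahedron, the vanishing of $\Cone(V_j \to V_i)$ for $i \le j$, and the cascading downward induction are all formal.
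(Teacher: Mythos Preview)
Your proof is correct and follows essentially the same downward induction as the paper: both produce $\mQ_{\mathbf l_j}\in\T$ for $j=m,m-1,\ldots,1$ by considering the map $V_j\otimes\mP_1\to\mP_2$ and using Property~(G) to see that the component $V_j\to V_i$ is an isomorphism for every $i\le j$. The only cosmetic difference is book-keeping: the paper, after obtaining $\mQ_{\mathbf l_m}$, replaces $\mP_1,\mP_2$ by their quotients $\mP_1'=\mP_1/\mQ_{\mathbf l_m}[\delta_m]$ and $\mP_2'=\mP_2/(V_m\otimes\mQ_{\mathbf l_m}[\delta_m])$ so that the cone at the next step is literally $\Cone(\eta_{m-1})$, whereas you keep $\mP_1,\mP_2$ fixed and invoke the $3{\times}3$ lemma to peel off the extra summands of $\Cone(g_j)$, which by induction already lie in $\T$.
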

\begin{proof}
Denote by $\T\subseteq\prf\dD_{\mathfrak{F}}(\overline{\delta})$ the triangulated subcategory generated by $\mP_1, \mP_2, \mK_1,\dots, \mK_m.$
To show that the category $\T$ coincides with the whole category $\prf\dD_{\mathfrak{F}}(\overline{\delta}),$ it is sufficient to check that all DG modules
$\mQ_{\bone}, \mQ_{\btwo},$ and $\mQ_{\mathbf l_i},\;i=1,\dots, m,$ belong to $\T.$

Let us consider the embedding of vector spaces $V_m\subset C\hookrightarrow \dHom_{\dD_{\mathfrak{F}}(\overline{\delta})}^{0}(\mP_1,\mP_2).$ This embedding induces a map $V_m\otimes \mP_1\to\mP_2.$
The property (G) implies that the canonical maps $V_m\to C\stackrel{\theta_{i\bullet}}{\to} V_i$ are isomorphisms for all $i.$ Therefore, the cone of the map
$V_m\otimes \mP_1\to\mP_2$ is isomorphic to the cone $C(\eta_m)$ of the map $V_m\otimes \mQ_{\bone}[1]\stackrel{\eta_m}{\to} \mQ_{\btwo}[1]$ in the triangulated category $\prf\dD_{\mathfrak{F}}(\overline{\delta}).$
On the other hand, we have an exact sequence $\mQ_{\mathbf l_m}\to \mK_m\to C(\eta_m)$ in the category $\prf\dD_{\mathfrak{F}}(\overline{\delta}).$ Thus, the object $\mQ_{\mathbf l_m}$ belongs to the subcategory $\T.$
Now we can take the DG modules $\mP_1'$ and $\mP_2'$ which are the quotients $\mP_1/\mQ_{\mathbf l_m}[\delta_m]$ and $\mP_2/V_m\otimes\mQ_{\mathbf l_m}[\delta_m].$
Both these DG modules belongs to the subcategory $\T.$ Repeating the procedure above with $V_{m-1}\subset C$ and the DG modules $\mP_1', \mP_2', \mK_{m-1},$ we can show that the DG module
$\mQ_{\mathbf l_{m-1}}$ also belongs to $\T.$ This implies that all $\mQ_{\mathbf l_i}$ are in the subcategory $\T.$ Finally, the DG modules $\mQ_{\bone}$ and $\mQ_{\btwo}$ also belong to the subcategory $\T$
as the quotients of $\mP_1$ and $\mP_2$ by submodules generated by $\mQ_{\mathbf l_i}.$
\end{proof}

\subsection{Embedding to categories with exceptional collection}

Let us now take the semi-projective DG $\dD_{\mathfrak{F}}(\overline{\delta})$\!--module $\mP_1\oplus\mP_2$ and denote by $\dE_{\mathfrak{F}}(\overline{\delta})$ the DG algebra of endomorphisms of this DG module, i.e $\dE_{\mathfrak{F}}(\overline{\delta})=\dEnd_{\dD_{\mathfrak{F}}(\overline{\delta})}(\mP_1\oplus\mP_2).$

\begin{proposition}\label{quasi-is}
The DG algebra $\dE_{\mathfrak{F}}(\overline{\delta})$ is quasi-isomorphic to the DG algebra $\dR_{\mathfrak F}(\chi),$ where $\chi:\ZZ^{m+1}\to\ZZ$ is such a homomorphism that
$\chi(\varepsilon_0)=0$ and $\chi(\varepsilon_i)=1-\delta_i$ for every $i=1,\dots, m.$
\end{proposition}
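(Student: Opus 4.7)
The plan is to construct an explicit DG algebra morphism $\Psi: \dR_{\mathfrak F}(\chi) \to \dE_{\mathfrak F}(\overline{\delta})$ by specifying it on the quiver generators, to verify it respects the defining relations of $R_{\mathfrak F}$, and then to prove it is a quasi-isomorphism by matching cohomology dimensions piece by piece. Set $\Psi(e_a) = \id_{\mP_a}$ for $a = 1, 2$. For $c \in C$, let $\Psi(c): \mP_1 \to \mP_2$ be the strict morphism of cones whose components are left multiplication $c: \mQ_\bone \to \mQ_\btwo$ and the block-diagonal map $\bigoplus_i (\pi_{V_i}(c) \otimes \id): \bigoplus \mQ_{\mathbf l_i}[\delta_i] \to \bigoplus V_i \otimes \mQ_{\mathbf l_i}[\delta_i]$, where $\pi_{V_i}: C \to V_i$ is the projection along $W_i$ (well defined by property (G), which gives $C = V_i \oplus W_i$). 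The cone-square commutativity reduces to the identity $\phi_{ij}\cdot c = (\pi_{V_i}(c))_j\cdot \psi_i$ with $\psi_i := \phi_{i1} v_{i1}$, which follows from relations r2--r4 defining $\dD_{\mathfrak F}(\overline{\delta})$. For each $i$, define $\Psi(b_i): \mP_2 \to \mP_1$ of degree $1 - \delta_i$ as the composition $\iota_i \circ \tilde\beta_i \circ \pi$, where $\pi: \mP_2 \twoheadrightarrow \mQ_\btwo[1]$ is the canonical closed projection out of the cone, $\tilde\beta_i: \mQ_\btwo[1] \to \mQ_{\mathbf l_i}[\delta_i]$ is left multiplication by $\beta_i$ read as a closed morphism of the required degree, and $\iota_i: \mQ_{\mathbf l_i}[\delta_i] \hookrightarrow \mP_1$ is the canonical inclusion of the $i$th summand of the sub-DG-module.

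Next, one verifies that relations 1--3 of Definition \ref{defalg} hold strictly in $\dE_{\mathfrak F}(\overline{\delta})$: relation 2 ($b_i v = 0$ for $v \in V_i$) reduces to $\beta_i v = 0$, which is relation r1 of $\dD$; relation 3 ($w b_i = 0$ for $w \in W_i$) reduces to $\pi_{V_i}(w) = 0$; and relation 1 ($b_i c b_j = 0$) holds because $\Psi(b_j)$ lands in the sub-DG-module $\bigoplus \mQ_{\mathbf l_l}[\delta_l] \subset \mP_1$, $\Psi(c)$ carries this sub into $\bigoplus V_l \otimes \mQ_{\mathbf l_l}[\delta_l] \subset \mP_2$, and $\Psi(b_i)$ annihilates the latter since it factors through $\pi$. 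Since $\dR_{\mathfrak F}(\chi)$ carries the zero differential and each generator maps to a closed element by construction, $\Psi$ is automatically a bona fide DG algebra morphism.

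Finally, to show $\Psi$ is a quasi-isomorphism I would compute each $H^*(\dHom(\mP_a, \mP_b))$ via the distinguished triangles defining $\mP_1$ and $\mP_2$ together with Proposition \ref{excolhom}. Straightforward long-exact-sequence calculations yield: $H^*(\dHom(\mP_2, \mP_1)) = \bigoplus_p \kk$ concentrated in degrees $1 - \delta_p$; $H^*(\dHom(\mP_1, \mP_2)) = C$ in degree $0$ together with $\bigoplus_p V_p \otimes W_p$ in degrees $1 - \delta_p$; and $H^*(\dHom(\mP_a, \mP_a)) = \kk \cdot \id_{\mP_a}$ plus $\bigoplus_p W_p$ (for $a = 1$) or $\bigoplus_p V_p$ (for $a = 2$) in degrees $1 - \delta_p$. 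These precisely reproduce the $\chi$-graded decomposition of $e_a R_{\mathfrak F} e_b$ furnished by Proposition \ref{descr} in the equidimensional case, and the explicit cocycles built from products of $\Psi(e_a), \Psi(c), \Psi(b_i)$ visibly represent the corresponding basis vectors, so $H^*(\Psi)$ is an isomorphism. The main obstacle is the careful bookkeeping of these long exact sequences, particularly when several $\delta_i$ coincide or equal $0$ or $1$, which forces several graded pieces of $R_{\mathfrak F}$ to land in the same cohomological degree and requires isolating the correct contributions.
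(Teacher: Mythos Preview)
Your proposal is correct and follows essentially the same strategy as the paper: construct an explicit DG algebra morphism $\dR_{\mathfrak F}(\chi)\to\dE_{\mathfrak F}(\overline{\delta})$ by sending $c\in C$ to the block-diagonal cone map with components $c$ and $\theta_{\bullet i}(c)\otimes\id$ (your $\pi_{V_i}$), sending $b_i$ to the composite $\mP_2\twoheadrightarrow\mQ_{\btwo}[1]\stackrel{\beta_i}{\to}\mQ_{\mathbf l_i}[\delta_i]\hookrightarrow\mP_1$, verifying relations 1)--3) exactly as you do, and then checking the quasi-isomorphism by computing $H^*(\dHom(\mP_a,\mP_b))$. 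The only difference is one of detail: the paper works out $\dHom(\mP_2,\mP_1)$ explicitly and then relegates the remaining three blocks to ``a direct check'', whereas you spell out all four pieces and match them against the graded components of $e_aR_{\mathfrak F}e_b$ coming from Proposition~\ref{descr} (using, implicitly, that in the equidimensional case every $T_{ij}$ with $i>j$ vanishes, so only $|P|\le 1$ contributes).
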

\begin{proof}
 The DG algebra $\dR_{\mathfrak F}(\chi)$ is isomorphic to the quotient DG algebra $\kk Q_{n,m}(\chi)/I_{\mathfrak F},$ where
$Q_{n,m}(\chi)$ is the DG quiver that is the quiver (\ref{TwoVQ}) with the grading $\chi.$ Let us construct a homomorphism of the DG algebras
$\xi:\kk Q_{n,m}(\chi)\to \dE_{\mathfrak{F}}(\overline{\delta})$ and show that this homomorphism induces a homomorphism $\widebar{\xi}: \dR_{\mathfrak F}(\chi)\to \dE_{\mathfrak{F}}(\overline{\delta}),$ which is a quasi-isomorphism.
To construct the map $\xi,$ it is sufficient to define $\xi(b_i)$ for all $i=1,\dots, m$ and $\xi(c)$ for any $c\in C.$

Consider the semi-projective DG modules $\mP_1,\, \mP_2.$ We have that
$\mP_1=\bigoplus_{i=1}^{m} \mQ_{\mathbf l_i}[\delta_i]\oplus \mQ_{\bone}[1]$ and  $\mP_2= \bigoplus_{i=1}^{m} V_i\otimes \mQ_{\mathbf l_i}[\delta_i]\oplus \mQ_{\btwo}[1]$ with differential induced by
the canonical maps (\ref{P-mod}).

For any $i=1,\dots, m$ we define $\xi(b_i)\in\dHom_{\dD_{\mathfrak{F}}(\overline{\delta})}^{1-\delta_i}(\mP_2,\mP_1)$ as the image of $\beta_i\in \dHom_{\dD_{\mathfrak{F}}(\overline{\delta})}^{0}(\mQ_{\btwo},\mQ_{\mathbf l_i})$ under the natural map
$
\psi_i:\dHom_{\dD_{\mathfrak{F}}(\overline{\delta})}(\mQ_{\btwo}[1],\mQ_{\mathbf l_i}[\delta_i])\to \dHom_{\dD_{\mathfrak{F}}(\overline{\delta})}(\mP_2,\mP_1).
$
It is easy to see that the induced map
\[
\bigoplus_{i=1}^m \dHom_{\dD_{\mathfrak{F}}(\overline{\delta})}^{0}(\mQ_{\btwo},\mQ_{\mathbf l_i})[\delta_i-1]\lto \dHom_{\dD_{\mathfrak{F}}(\overline{\delta})}(\mP_2,\mP_1)
\]
is a quasi-isomorphism, because the complement to the image of this map is
the acyclic complex
$
\bigoplus_{i=1}^{m} V_i^{\vee}\stackrel{\sim}{\to} \bigoplus_{i=1}^{m} V_i^{\vee}.
$

The map $\xi(c)\in\dHom_{\dD_{\mathfrak{F}}(\overline{\delta})}^{0}(\mP_1,\mP_2)$ can be defined as the map that has  the following components
\[
c:\mQ_{\bone}[1]\to\mQ_{\btwo}[1]\qquad\text{ and}\qquad \theta_{i\bullet}(c)\otimes\id:\mQ_{\mathbf l_i}[\delta_i]\to V_i\otimes\mQ_{\mathbf l_i}[\delta_i],
\]
 where $\theta_{i\bullet}: C\to V_i=T_{i\bullet}$ is the natural projections.
It is easy to see that the map $\xi(c)$ commutes with differentials and we obtain a well-defined homomorphism of DG algebras $\xi:\kk Q_{n,m}(\chi)\to \dE_{\mathfrak{F}}(\overline{\delta}).$

Let us show that this homomorphism induces a homomorphism $\widebar{\xi}: \dR_{\mathfrak F}(\chi)\to \dE_{\mathfrak{F}}(\overline{\delta}).$ Thus, we have to check that relations 1)--3) of Definition \ref{defalg} hold.
From construction we see that the composition $\xi(b_i)\xi(c)\xi(b_j)=0$ for all $i,j.$ The relation 2) of Definition \ref{defalg} directly follows from r1) of the definition of $I_{\mathfrak F}$
which says that the composition $\beta_i v: \mQ_{\bone}\to \mQ_{\mathbf l_i}[\delta_i]$ is equal to zero, when $v\in V_i.$ Finally, the relation 3) of Definition \ref{defalg} holds because $\theta_{i\bullet}(w)=0,$
when $w\in W_i.$ Thus, we obtain the homomorphism of DG algebras $\widebar{\xi}: \dR_{\mathfrak F}(\chi)\to \dE_{\mathfrak{F}}(\overline{\delta}).$ A direct check shows that this homomorphism is a quasi-isomorphism, i.e. the DG algebra $\dR_{\mathfrak F}(\chi)$
is isomorphic to the  cohomology  $H^{*}(\dE_{\mathfrak{F}}(\overline{\delta})).$
\end{proof}

\begin{theorem}\label{imbedR} There is a fully faithful functor $\prf\dR_{\mathfrak F}(\chi)\hookrightarrow \prf\dD_{\mathfrak{F}}(\overline{\delta}),$ which establishes an equivalence between the triangulated category $\prf\dR_{\mathfrak F}(\chi)$
and the left orthogonal
${}^{\perp}\cS,$ where $\cS\subset\prf\dD_{\mathfrak{F}}(\overline{\delta})$ is the full triangulated subcategory generated by the exceptional collection $\{\mK_1,\dots, \mK_m\}.$
\end{theorem}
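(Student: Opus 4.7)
The plan is to build the embedding from the classical generator--endomorphism algebra correspondence, and then identify its essential image with ${}^{\perp}\cS$ using the semiorthogonal decomposition provided by the exceptional collection $\{\mK_1,\dots,\mK_m\}$.

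First I would consider the semi-projective DG $\dD_{\mathfrak{F}}(\overline{\delta})$\!--module $\mP:=\mP_1\oplus\mP_2$. By construction, $\dEnd_{\dD_{\mathfrak{F}}(\overline{\delta})}(\mP)=\dE_{\mathfrak{F}}(\overline{\delta})$, so the DG functor
\[
F=\dHom_{\dD_{\mathfrak{F}}(\overline{\delta})}(\mP,-)\colon \Mod\dD_{\mathfrak{F}}(\overline{\delta})\lto \Mod\dE_{\mathfrak{F}}(\overline{\delta})
\]
sends $\mP$ to the free rank one module $\dE_{\mathfrak{F}}(\overline{\delta})$. Since $\mP$ is perfect, $F$ descends to a triangulated functor $\prf\dD_{\mathfrak{F}}(\overline{\delta})\to\prf\dE_{\mathfrak{F}}(\overline{\delta})$, and by the standard generator/endomorphism principle (together with the fact that $F$ is fully faithful on $\mP$) it restricts to an equivalence between the thick subcategory $\langle\mP\rangle\subset\prf\dD_{\mathfrak{F}}(\overline{\delta})$ classically generated by $\mP$ and $\prf\dE_{\mathfrak{F}}(\overline{\delta})$. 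Composing with the equivalence $\prf\dE_{\mathfrak{F}}(\overline{\delta})\simeq\prf\dR_{\mathfrak{F}}(\chi)$ coming from the quasi-isomorphism of Proposition \ref{quasi-is} yields a fully faithful functor $\prf\dR_{\mathfrak{F}}(\chi)\hookrightarrow\prf\dD_{\mathfrak{F}}(\overline{\delta})$ whose essential image is $\langle\mP\rangle$.

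Next I would identify $\langle\mP\rangle$ with ${}^{\perp}\cS$. The inclusion $\langle\mP\rangle\subseteq{}^{\perp}\cS$ is immediate because $\mP_1,\mP_2\in{}^{\perp}\cS$ by the previous proposition and ${}^{\perp}\cS$ is thick. For the reverse inclusion, I would use that $\{\mK_1,\dots,\mK_m\}$ is an exceptional collection consisting of perfect (hence compact) objects, so $\cS$ is an admissible subcategory of $\prf\dD_{\mathfrak{F}}(\overline{\delta})$ and we have a semiorthogonal decomposition $\prf\dD_{\mathfrak{F}}(\overline{\delta})=\langle{}^{\perp}\cS,\cS\rangle$, with projection onto $\cS$ constructed by the usual iterated cones against $\mK_m,\dots,\mK_1$. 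Given any $X\in{}^{\perp}\cS$, Proposition \ref{gencat} asserts that $X$ lies in the thick subcategory generated by $\{\mP_1,\mP_2,\mK_1,\dots,\mK_m\}$; applying the projection onto ${}^{\perp}\cS$ kills the $\mK_i$'s and leaves $X$, which is therefore obtained by the same operations from the images of $\mP_1,\mP_2$, which are $\mP_1,\mP_2$ themselves. Hence $X\in\langle\mP\rangle$.

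I expect the main obstacle to be Step 2, namely making the semiorthogonal decomposition $\prf\dD_{\mathfrak{F}}(\overline{\delta})=\langle{}^{\perp}\cS,\cS\rangle$ precise in the present DG setting and checking that the generation statement of Proposition \ref{gencat} propagates through the projection functor. The cleanest way I see is to build the decomposition triangle for any object $X$ explicitly: using the computations of $\dHom(\mQ_a,\mK_i)$ implicit in the proof that $\mP_1,\mP_2\in{}^{\perp}\cS$, one checks that the canonical evaluation map from the sum of shifted $\mK_j$'s (with coefficients given by $\dHom$ complexes) to $X$ has cone in ${}^{\perp}\cS$, and that cone is what lies in $\langle\mP\rangle$. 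The remaining verifications (compatibility of $F$ with cones, with the quasi-isomorphism of Proposition \ref{quasi-is}, and full faithfulness on the thick hull of $\mP$) are standard and do not require new ideas.
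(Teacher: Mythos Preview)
Your approach is essentially the same as the paper's: use Proposition \ref{quasi-is} to identify $\prf\dR_{\mathfrak F}(\chi)$ with the thick subcategory $\langle\mP_1,\mP_2\rangle\subset\prf\dD_{\mathfrak{F}}(\overline{\delta})$, then combine $\mP_1,\mP_2\in{}^{\perp}\cS$ with Proposition \ref{gencat} and the admissibility of $\cS$ to conclude $\langle\mP_1,\mP_2\rangle={}^{\perp}\cS$. The paper compresses your second step into a single sentence, but the underlying semiorthogonal decomposition argument you spell out is exactly what is meant; your worries about ``making the decomposition precise'' are unnecessary, since $\cS$ is generated by an exceptional collection and is therefore automatically admissible.
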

\begin{proof}
By Proposition \ref{quasi-is}, there is a quasi-isomorphism of the DG algebras $\widebar{\xi}: \dR_{\mathfrak F}(\chi)\to \dE_{\mathfrak{F}}(\overline{\delta}).$ Therefore, the triangulated category $\prf\dR_{\mathfrak F}(\chi)$ is equivalent
to the triangulated subcategory of $\prf\dD_{\mathfrak{F}}(\overline{\delta})$ which is generated by $\mP_1$ and $\mP_2.$ By Proposition \ref{gencat}, this subcategory coincides with  the left orthogonal
${}^{\perp}\cS,$ where $\cS\subset\prf\dD_{\mathfrak{F}}(\overline{\delta})$ is the full triangulated subcategory generated by the exceptional collection $\{\mK_1,\dots, \mK_m\}.$
\end{proof}

\begin{corollary}
For any equidimensional family $\mathfrak{F}$ the dimension of the triangulated category $\prf\dR_{\mathfrak F}(\chi)$ does not exceed $2.$
\end{corollary}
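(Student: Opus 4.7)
The plan is to reduce the question to bounding the dimension of the ambient triangulated category $\prf\dD_{\mathfrak F}(\overline{\delta})$, where $\overline{\delta}$ is chosen so that Theorem \ref{imbedR} identifies $\prf\dR_{\mathfrak F}(\chi)$ with the left orthogonal ${}^{\perp}\cS$, and then to transfer the resulting bound through the projection functor associated to the semiorthogonal decomposition $\prf\dD_{\mathfrak F}(\overline{\delta}) = \langle {}^{\perp}\cS, \cS\rangle$.

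First, I would show that $\dim \prf\dD_{\mathfrak F}(\overline{\delta}) \le 2$ by exploiting the three-block structure. By Proposition \ref{excol} the full exceptional collection $\mQ_{\bone}, \mQ_{\btwo}, \mQ_{{\mathbf l}_1}, \ldots, \mQ_{{\mathbf l}_m}$ of length $m+2$ has its last $m$ objects pairwise completely orthogonal, giving a three-block semiorthogonal decomposition with blocks $\langle \mQ_{\bone}\rangle$, $\langle \mQ_{\btwo}\rangle$, $\langle \mQ_{{\mathbf l}_1}, \ldots, \mQ_{{\mathbf l}_m}\rangle$. Set
\[
G = \mQ_{\bone} \oplus \mQ_{\btwo} \oplus \bigoplus_{i=1}^{m} \mQ_{{\mathbf l}_i}.
\]
Any object $X$ fits into two successive exact triangles splitting off the block components. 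Each graded piece lies in a subcategory generated by exceptional objects with trivial positive and negative self-Ext groups (and pairwise orthogonal in the third block, by Proposition \ref{excolhom}(1)); hence each graded piece is a finite direct sum of shifts of summands of $G$ and lies in $\langle G\rangle_1$. Assembling the two cones yields $X \in \langle G\rangle_3$, so $\dim \prf\dD_{\mathfrak F}(\overline{\delta}) \le 2$.

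Second, I would transfer this bound to the subcategory ${}^{\perp}\cS$. The subcategory $\cS = \langle \mK_1, \ldots, \mK_m\rangle$ is right admissible, since it is generated by an exceptional collection inside a triangulated category carrying a full exceptional collection. Thus the inclusion ${}^{\perp}\cS \hookrightarrow \prf\dD_{\mathfrak F}(\overline{\delta})$ admits a triangulated right adjoint $\pi$ which restricts to the identity on ${}^{\perp}\cS$. Applying the exact functor $\pi$ to the relation $\langle G\rangle_3 = \prf\dD_{\mathfrak F}(\overline{\delta})$, and using that $\pi$ commutes with shifts, finite direct sums, direct summands and the $\diamond$ operation, we get $\langle \pi(G)\rangle_3 = {}^{\perp}\cS$. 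For any $Y \in {}^{\perp}\cS$ this follows because $Y = \pi(Y) \in \pi\langle G\rangle_3 \subseteq \langle \pi(G)\rangle_3$. Therefore $\dim \prf\dR_{\mathfrak F}(\chi) = \dim {}^{\perp}\cS \le 2$.

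The only mildly delicate point is the claim that a block of completely orthogonal exceptional objects already generates its span in a single step (so each of the three blocks contributes dimension $0$); this follows directly from the Ext vanishing in Proposition \ref{excolhom}(1) together with the definition of $\langle E\rangle_1$ as the closure under shifts, finite direct sums, and direct summands. The rest of the argument is purely formal manipulation of admissible semiorthogonal components.
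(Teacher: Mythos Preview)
Your proof is correct and follows the same two-step strategy as the paper: first bound $\dim\prf\dD_{\mathfrak F}(\overline\delta)\le 2$ using the three-block exceptional collection, then pass to the admissible subcategory ${}^{\perp}\cS\cong\prf\dR_{\mathfrak F}(\chi)$ via the projection functor. The paper states both steps tersely as known facts, while you have spelled out the standard arguments behind them; one minor terminological slip is that the existence of a right adjoint to the inclusion ${}^{\perp}\cS\hookrightarrow\prf\dD_{\mathfrak F}(\overline\delta)$ comes from \emph{left} admissibility of $\cS$, not right admissibility, but since a subcategory generated by an exceptional collection is admissible on both sides this does not affect the argument.
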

\begin{proof}
The category $\prf\dD_{\mathfrak{F}}(\overline{\delta})$ has a full exceptional collection with three blocks $\mQ_{\bone},\; \mQ_{\btwo}$ and $\mQ_{\mathbf l_i},\; i=1\dots, m.$
Hence, the dimension of $\prf\dD_{\mathfrak{F}}(\overline{\delta})$ is not more than $2.$ The dimension of the admissible subcategory $\prf\dR_{\mathfrak F}(\chi)\subset\prf\dD_{\mathfrak{F}}(\overline{\delta})$ does not exceed the dimension of the whole triangulated category $\prf\dD_{\mathfrak{F}}(\overline{\delta}).$
\end{proof}

\section{Noncommutative resolutions of singular curves}

\subsection{Equidimensional case with  n=2}
In this section we consider a special case that is directly related to a geometry of singular rational curves. It will be  shown that some DG algebras of the form
$\dR_{\mathfrak{F}}(\chi)$ provide noncommutative smooth resolutions for  singular rational curves that can be constructed directly from the family $\mathfrak{F}.$
Let us recall a definition of a noncommutative smooth resolution.

\begin{definition} {\sf A noncommutative (or categorical) smooth resolution} of a scheme $\rY$ is a fully faithful functor
\[
\pi^*: \prf\rY\hookrightarrow \prf\dA,
\]
where $\dA$ is a cohomologically bounded DG algebra which is smooth.
\end{definition}
\begin{remark}
{\rm
If $\pi: \widetilde{\rY}\to \rY$  is a usual resolution of singularities of a singular variety $\rY$  that has rational singularities, then the functor  $\bL\pi^*: \prf\rY \to \prf\widetilde{\rY}$ delivers an example of a noncommutative smooth resolution of $\rY.$
Note, however, that if $\bR\pi_*\cO_{\widetilde{\rY}}\ncong \cO_{\rY},$ then the functor  $\bL\pi^*$ is not a noncommutative resolution of $\rY$ since the functor $\bL\pi^*$ is not fully faithful in this case.}
\end{remark}

Now, we consider a special case of the equidimensional family $\mathfrak{F}$ when $n,$ which is the dimension of the vector space $C,$ is equal to $2$ and all vector subspaces $V_i,\, W_i\subset C,\; i=1,\dots, m,$ are one-dimensional.
We also assume that $\delta_i=0$ for all $i=1,\dots, m.$ Thus, in this case we have a usual quiver $(\Phi_{\mathfrak{F}}, J_{\mathfrak F})$ with the following set of arrows:
\begin{flalign*}
\begin{array}{rl}
\mathrm{a1)} & \text{two arrows $\{c_1, c_2\}$ from $\bone$ to $\btwo,$} \\
\mathrm{a2)} & \text{two arrows $\beta_i$ and $\phi_i$  from $\btwo$ to ${\mathbf l_i}$ for each $i=1,\dots, m,$}\\
\end{array}&&
\end{flalign*}
while the ideal of relations $J_{\mathfrak{F}}$ is  generated by the elements:
\begin{flalign*}
\begin{array}{rlll}
\mathrm{r1)} & \beta_i v_i, & \text{where}\quad \langle v_i\rangle= V_i& \text{for any}\quad i=1,\dots, m,\\
\mathrm{r2)} & \phi_{i} w_i, & \text{where}\quad \langle w_i\rangle= W_i& \text{for any}\quad i=1,\dots, m.\\
\end{array}&&
\end{flalign*}

As above, denote by $\dD_{\mathfrak{F}}$ the quotient path algebra $\kk\Phi_{\mathfrak{F}}/J_{\mathfrak F}$ of the quiver with relations $(\Phi_{\mathfrak{F}}, J_{\mathfrak F}).$
We know that the projective $\dD_{\mathfrak{F}}$\!--modules $\mQ_{\bone}=e_{\bone}\dD_{\mathfrak{F}},$ $\mQ_{\btwo}=e_{\btwo}\dD_{\mathfrak{F}},$ and $\mQ_{\mathbf l_i}=e_{\mathbf l_i}\dD_{\mathfrak{F}},\; i=1\dots, m,$ form a full exceptional collection in the triangulated category $\prf\dD_{\mathfrak{F}}$
such that the objects $\mQ_{\mathbf l_i},\; i=1\dots, m,$ are completely orthogonal (see Proposition \ref{excol}). Moreover, in the case of a usual quiver this exceptional collection is strong and, by Proposition \ref{excolhom}, all nontrivial
$\Hom$\!--spaces are two-dimensional vector spaces (see Fig. \ref{quiver2}).

\begin{figure}[ht]
\hspace*{0cm}
\xymatrix{
&&&& \bullet\, {\mathbf l_{1}}\\
&&&& \bullet\, {\mathbf l_{2}}\\
\underset{\bone}{\bullet}
\ar@/^0.8pc/@<0.5ex>[urrrr]
\textcolor{red}{}
\ar@[red]@{-->}@/^0.8pc/@<-0.3ex>[urrrr]
\ar@/^1pc/@<0.9ex>[uurrrr]
\textcolor{red}{}
\ar@[red]@{-->}@/^1pc/@<0.1ex>[uurrrr]
\ar@/_0.8pc/@<0.3ex>[drrrr]
\textcolor{red}{}
\ar@[red]@{-->}@/_0.8pc/@<-0.5ex>[drrrr]
\ar@<0.5ex>[rr]
\ar@<-0.5ex>[rr] &&
\underset{\btwo}{\bullet}
\ar@<0.5ex>[uurr] |(0.47)\hole
\textcolor{red}{}
\ar@[red]@{-->}@<-0.3ex>[uurr] |(0.51)\hole
\ar@<-0.1ex>[urr]
\textcolor{red}{}
\ar@[red]@{-->}@<-0.9ex>[urr]
\ar@<0.9ex>[drr]
\textcolor{red}{}
\ar@[red]@{-->}@<0.1ex>[drr]&&
\genfrac{}{}{0pt}{}{\vdots}{\vdots}\\
&&&& \bullet\, {\mathbf l_{m}}}
\caption{The algebra $\dD_{\mathfrak{F}}$}
\label{quiver2}
\end{figure}

\subsection{Singular rational curves with ordinary double points}
First, it makes sense to consider a simpler case, when  the family $\mathfrak{F}$ consists of $2m$ different one-dimensional subspaces.
Let us consider the projective line $\PP^1=\PP^1(C^{\vee})$ and denote by $\rv_i, \rw_j\in \PP^1, \; 1\le i,j\le m,$ the different closed points on the projective line $\PP^1$ that are related
to the vector subspaces $V_i$ and $W_j,$ respectively. Gluing the points $\rv_i$ and $\rw_i$ to nodes for each $i=1,\dots, m$ we obtain a nodal curve $\rY$ with $m$ different nodal points $\rs_1,\dots, \rs_m\in \rY.$
The curve $\rY$ can be obtained as a fibred coproduct $\PP^1\coprod_{\rT} \rS$ in the category of schemes, where $\rT=\coprod_{i=1}^m (\rv_i\sqcup\rw_i)\subset \PP^1$ is the closed subscheme of $\PP^1,$ consisting of $2m$ closed points $\rv_i, \rw_j,$ and $\rS=\coprod_{i=1}^m \rs_i$ is the disjoint union of $m$ points. Thus, we have the following diagram of schemes
\begin{equation}\label{fiber}
\begin{split}
\xymatrix{
\PP^1 \ar[d]_{\rf} & \rT\ar[l]_{\rj}\ar[d]^{\rp}\ar[dl]_{\rt}\\
\rY & \rS\ar[l]_{\ri}
}
\end{split}
\end{equation}
which is both cartesian and cocartesian.

The triangulated category $\prf\PP^1=\D^b(\coh \PP^1)$ has a full exceptional collection $(\cO_{\PP^1}(-1), \cO_{\PP^1})$ and, hence, it is equivalent to the full triangulated subcategory $\cQ\subset\prf\dD_{\mathfrak{F}}$ which is
generated by the projective modules $\mQ_{\bone}$ and $\mQ_{\btwo}.$ Denote by ${\mathrm\Phi}:\prf\PP^1\to \prf\dD_{\mathfrak{F}}$ the fully faithful functor which sends $\cO_{\PP^1}(-1)$ and $\cO_{\PP^1}$ to
$\mQ_{\bone}$ and $\mQ_{\btwo},$ respectively. The left orthogonal ${}^{\perp}\cQ$ is generated by the projective $\dD_{\mathfrak{F}}$\!--modules $\mQ_{\mathbf l_i}, \; i=1,\dots, m,$ and it is equivalent to
the triangulated category $\prf\rS,$ where $\rS=\coprod_{i=1}^m \rs_i$ is the disjoint union of $m$ points,  as above. Denote by ${\mathrm\Psi}:\prf\rS\to \prf\dD_{\mathfrak{F}}$ the fully faithful functor which sends
the structure sheaf of the point $\rs_i$ to the projective module $\mQ_{\mathrm l_i}.$

It is useful to note that actually the DG category of perfect DG modules $\prfdg\dD_{\mathfrak{F}}$ is quasi-equivalent to a gluing of the DG categories $\prfdg\PP^1$ and $\prfdg\rS$ via  the DG bimodule
defined by the stricture sheaf of the scheme $\rT$ via the functors $\rj_*$ and $\rp_*$ (see, e.g., \cite{KL,Or16} for definition of gluing).
Thus, there are  DG functors between DG categories
\[
{\mPhi}:\prfdg\PP^1\lto \prfdg\dD_{\mathfrak{F}} \quad\text{and}\quad {\mPsi}:\prfdg\rS\lto \prfdg\dD_{\mathfrak{F}},
\]
which induce the homotopy fully faithful functors $\rPhi$ and $\rPsi$ between the triangulated categories.
As we mentioned above, the DG category $\prfdg\dD_{\mathfrak{F}}$ is quasi-equivalent to a gluing of the DG categories $\prfdg\PP^1$ and $\prfdg\rS$ via the DG bimodule
defined by the stricture sheaf of the scheme $\rT.$ In particular, for any objects $\mF\in\prfdg\PP^1$ and $\mG\in\prfdg\rS$ we have maps
\begin{equation}\label{iota}
\zeta: \dHom_{\prfdg\rT}(\mj^*\mF, \mp^*\mG)\lto \dHom_{\dD_{\mathfrak{F}}}(\mPhi\mF, \mPsi\mG)
\end{equation}
that are quasi-isomorphisms of complexes of vector spaces.

The pullback DG functors
\[
\mf^*: \prfdg \rY\lto \prfdg \PP^1,\quad\text{and}\quad \mi^*:\prfdg \rY\lto \prfdg\rS
\]
define a DG functor
\[
\mPi^*: \prfdg\rY\lto\prfdg\dD_{\mathfrak{F}}
\]
which sends an object $\mE\in\prfdg\rY$ to the object $\Cone (\mPhi\mf^*\mE\stackrel{\zeta_{\mE}}{\lto} \mPsi\mi^*\mE)[-1],$
where $\zeta_{E}$ is the image of $\id_{\mt^* \mE}\in \dHom_{\prfdg\rT}(\mj^*\mf^*\mE, \mp^*\mi^*\mE)$ under the map $\zeta$ from (\ref{iota}).
The DG functor $\mPi$ induces the homotopy functor
$
{\rPi}^*:\prf\rY\to\prf\dD_{\mathfrak{F}}
$
between the triangulated categories.
\begin{proposition}\label{resol}
The functor $
{\rPi}^*:\prf\rY\to\prf\dD_{\mathfrak{F}}
$
is fully faithful.
\end{proposition}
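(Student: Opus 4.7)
The plan is to show that for all $\mE, \mE' \in \prf\rY$ the natural comparison map
$$\bR\Hom_\rY(\mE, \mE') \lto \bR\Hom_{\dD_{\mathfrak F}}(\rPi^*\mE, \rPi^*\mE')$$
is a quasi-isomorphism, by computing both sides as the fiber of the same Mayer--Vietoris map $\bR\Hom_{\PP^1}(\rf^*\mE, \rf^*\mE') \oplus \bR\Hom_\rS(\ri^*\mE, \ri^*\mE') \lto \bR\Hom_\rT(\rt^*\mE, \rt^*\mE')$. On the $\rY$ side this uses that (\ref{fiber}) is cocartesian with $\rj, \ri$ closed immersions into a nodal curve, which yields the short exact sequence $0 \to \mE' \to \rf_*\rf^*\mE' \oplus \ri_*\ri^*\mE' \to \rt_*\rt^*\mE' \to 0$ after tensoring the sheaf-level resolution of $\cO_\rY$ with the perfect complex $\mE'$; applying $\bR\Hom_\rY(\mE, -)$ and invoking the projection formula for the closed immersions produces the desired distinguished triangle.

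On the $\dD_{\mathfrak F}$ side, first apply $\bR\Hom(\rPi^*\mE, -)$ to the defining triangle $\rPi^*\mE' \to \rPhi\rf^*\mE' \to \rPsi\ri^*\mE' \to [1]$ to reduce the computation to the two terms $\bR\Hom(\rPi^*\mE, \rPhi\rf^*\mE')$ and $\bR\Hom(\rPi^*\mE, \rPsi\ri^*\mE')$. Then apply $\bR\Hom(-, \rPhi\rf^*\mE')$ and $\bR\Hom(-, \rPsi\ri^*\mE')$ to the triangle $\rPi^*\mE \to \rPhi\rf^*\mE \to \rPsi\ri^*\mE \to [1]$. By Proposition \ref{excol} the image of $\rPsi$ comes after the image of $\rPhi$ in the exceptional collection, so $\bR\Hom(\rPsi\ri^*\mE, \rPhi\rf^*\mE') = 0$; combined with the full faithfulness of $\rPhi$ this gives $\bR\Hom(\rPi^*\mE, \rPhi\rf^*\mE') \cong \bR\Hom_{\PP^1}(\rf^*\mE, \rf^*\mE')$. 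Using full faithfulness of $\rPsi$ and the quasi-isomorphism $\zeta$ of (\ref{iota}), which identifies $\bR\Hom(\rPhi\rf^*\mE, \rPsi\ri^*\mE')$ with $\bR\Hom_\rT(\rj^*\rf^*\mE, \rp^*\ri^*\mE') = \bR\Hom_\rT(\rt^*\mE, \rt^*\mE')$, one obtains a triangle exhibiting $\bR\Hom(\rPi^*\mE, \rPsi\ri^*\mE')$ as the cone of the pullback map $\rp^*\colon \bR\Hom_\rS(\ri^*\mE, \ri^*\mE') \to \bR\Hom_\rT(\rt^*\mE, \rt^*\mE')$.

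Combining these two computations via the octahedral axiom, $\bR\Hom_{\dD_{\mathfrak F}}(\rPi^*\mE, \rPi^*\mE')$ becomes the fiber of a morphism $\bR\Hom_{\PP^1}(\rf^*\mE, \rf^*\mE') \oplus \bR\Hom_\rS(\ri^*\mE, \ri^*\mE') \to \bR\Hom_\rT(\rt^*\mE, \rt^*\mE')$ which is exactly the Mayer--Vietoris map on $\rY$. A comparison of the boundary/comparison morphisms then promotes this to an honest quasi-isomorphism of distinguished triangles, whence the conclusion follows from the five lemma.

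The main obstacle I expect is purely bookkeeping: verifying that the map on the $\dD_{\mathfrak F}$ side is not merely isomorphic abstractly to $\rj^*-\rp^*$ but actually agrees with this specific Mayer--Vietoris difference map after the identifications coming from $\zeta$. This boils down to checking that the quasi-isomorphism (\ref{iota}), which is built from the gluing description of $\prfdg\dD_{\mathfrak F}$ as $\prfdg\PP^1$ and $\prfdg\rS$ glued by the bimodule $\dHom_\rT(\rj^*(-), \rp^*(-))$, intertwines the composition with morphisms in $\prfdg\PP^1$ (resp.\ $\prfdg\rS$) with the functor $\rj^*$ (resp.\ $\rp^*$). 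This compatibility is tautological from the construction of the gluing DG category in the sense of \cite{Or16}, but must be spelled out explicitly to rigorously match the two Mayer--Vietoris triangles.
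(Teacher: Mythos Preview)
Your proposal is correct and follows essentially the same approach as the paper's proof, which in turn refers to \cite[Prop.~6.5]{KL}: both arguments express $\bR\Hom_{\dD_{\mathfrak F}}(\rPi^*\mE,\rPi^*\mE')$ as the fiber of the Mayer--Vietoris map $\bR\Hom_{\PP^1}\oplus\bR\Hom_{\rS}\to\bR\Hom_{\rT}$ via the semiorthogonality $\bR\Hom(\rPsi-,\rPhi-)=0$, the full faithfulness of $\rPhi,\rPsi$, and the identification $\zeta$, and then match this with the triangle on $\rY$ obtained by tensoring the short exact sequence $\cO_{\rY}\to\rf_*\cO_{\PP^1}\oplus\ri_*\cO_{\rS}\to\rt_*\cO_{\rT}$ with $\mE'$. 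The only cosmetic difference is that the paper computes the Hom between the two cones in one step (yielding the three-term complex directly), whereas you unfold it into two applications of the defining triangle followed by the octahedral axiom; the content is identical, and your explicit flagging of the compatibility of $\zeta$ with compositions is exactly the point the paper absorbs into the phrase ``naturally quasi-isomorphic.''
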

\begin{proof}
The proof is the same as the proof of Proposition 6.5 from \cite{KL} (see details there).
For any $\mE\in\prfdg\rY$ the object $\mPi\mE$ is the cone $\Cone (\mPhi\mf^*\mE\stackrel{\zeta_{\mE}}{\lto} \mPsi\mi^*\mE)[-1].$
Therefore, for any two objects $\mE_1, \mE_2\in\prfdg\rY $ the complex $\dHom_{\dD_{\mathfrak{F}}}(\mPi\mE_1, \mPi\mE_2)$ is equal to the complex
\[
\Cone\Bigl(\dHom_{\dD_{\mathfrak{F}}}(\mPhi\mf^*\mE_1, \mPhi\mf^*\mE_2)\oplus \dHom_{\dD_{\mathfrak{F}}}(\mPsi\mi^*\mE_1, \mPsi\mi^*\mE_2)\lto
\dHom_{\dD_{\mathfrak{F}}}(\mPhi\mf^*\mE_1, \mPsi\mi^*\mE_2) \Bigr) [-1].
\]
Since the functors $\Phi$ and $\Psi$ are fully faithful the complex $\dHom_{\dD_{\mathfrak{F}}}(\mPi\mE_1, \mPi\mE_2)$ is naturally quasi-isomorphic to the complex
\[
\Cone\Bigl(\dHom_{\prfdg\PP^1}(\mf^*\mE_1, \mf^*\mE_2)\oplus \dHom_{\prfdg\rS}(\mi^*\mE_1, \mi^*\mE_2)\lto
\dHom_{\prfdg\rT}(\mj^*\mf^*\mE_1, \mp^*\mi^*\mE_2)\Bigr)[-1].
\]
The last complex is naturally quasi-isomorphic to a complex which calculates the $\Hom$\!--spaces from the object $\mE_1$ to the object
\[
M=\Cone\left(\rf_*\rf^*\mE_2\oplus \ri_* \ri^*\mE_2\lto \rf_*\rj_*\rp^*\ri^*\mE_2)\right)[-1]
\]
in the triangulated category $\D^b(\coh\rY).$ On the other hand, the object $M$ is isomorphic to the object $\mE_2$ in the triangulated category $\D^b(\coh\rY).$
Indeed, if we consider a tensor product of  $\mE_2$ and the exact triangle
\[
\cO_{\rY}\lto \rf_*\cO_{\PP^1}\oplus \ri_*\cO_{\rS}\lto\rf_*\rj_*\cO_{\rT},
\]
in $\D^b(\coh\rY),$ then we obtain a natural isomorphism $\mE_2\stackrel{\sim}{\to} M.$ Thus, the canonical map
\[
{\rPi}^*:\Hom_{\prf\rY}(\mE_1, \mE_2)\lto \Hom_{\prf\dD_{\mathfrak{F}}}(\rPi^*\mE_1, \rPi^*\mE_2)
\]
is an isomorphism for any pair of objects $\mE_1, \mE_2.$
\end{proof}

The DG functor $\mPi^*: \prfdg\rY\to\prfdg\dD_{\mathfrak{F}}$ also induces a  functor ${\rPi}_*:\prf\dD_{\mathfrak{F}}\to \D^b(\coh\rY)$ that is the right adjoint to the functor ${\rPi}^*.$
It is easy to see that the functor ${\rPi}_*$ restricted to the full subcategory $\rPhi(\prf\PP^1)\subset \prf\dD_{\mathfrak{F}}$ is isomorphic to the direct image $\rf_*:\prf\PP^1\to \D^b(\coh\rY).$
In particular, the functor $\rPi_*$ sends the object $\mQ_{\bone}$ and $\mQ_{\btwo}$ to the coherent sheaves $\rf_*\cO_{\PP^1}(-1)$ and $\rf_*\cO_{\PP^1},$ respectively.
On the other hand, it can be checked that the restriction of ${\rPi}_*$ to the full subcategory $\rPsi(\prf\rS)\subset \prf\dD_{\mathfrak{F}}$ is isomorphic to the functor
$\Cone(\ri_*\to \ri_*\rp_*\rp^*).$ In particular, the right adjoint functor $\rPi_*$ sends the object $\mQ_{\mathbf l_i}$ to the skyscraper sheaf $\cO_{\rs_i}$ for any $i=1,\dots, m.$

\begin{proposition}\label{ortog}
The image of the functor $
{\mathrm\Pi}^*:\prf\rY\hookrightarrow\prf\dD_{\mathfrak{F}}
$
is contained in the left orthogonal
${}^{\perp}\cS,$ where $\cS\subset\prf\dD_{\mathfrak{F}}$ is the full triangulated subcategory generated by the objects $\mK_1,\dots, \mK_m.$
\end{proposition}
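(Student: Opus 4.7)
The functor $\rPi^*$ admits the right adjoint $\rPi_*:\prf\dD_{\mathfrak F}\to\D^b(\coh\rY)$ described in the paragraph preceding the statement. By adjunction, the image of $\rPi^*$ will lie in ${}^{\perp}\cS$ if and only if $\rPi_*\mK_i = 0$ in $\D^b(\coh\rY)$ for every $i=1,\dots,m$, so the plan is to compute $\rPi_*\mK_i$ and show that it is acyclic.

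First I will apply $\rPi_*$ term-by-term to $\mK_i = \Tot\bigl(V_i\otimes \mQ_{\bone}\overset{\eta_i}\lto \mQ_{\btwo}\overset{\beta_i}\lto\mQ_{\mathbf l_i}\bigr)$, using the given values $\rPi_*\mQ_{\bone}=\rf_*\cO_{\PP^1}(-1)$, $\rPi_*\mQ_{\btwo}=\rf_*\cO_{\PP^1}$, $\rPi_*\mQ_{\mathbf l_i}=\cO_{\rs_i}$, obtaining the three-term complex
\[
\rPi_*\mK_i\ \cong\ \Tot\Bigl(V_i\otimes\rf_*\cO_{\PP^1}(-1)\ \overset{\overline{\eta}_i}\lto\ \rf_*\cO_{\PP^1}\ \overset{\overline{\beta}_i}\lto\ \cO_{\rs_i}\Bigr).
\]
The map $\overline{\eta}_i$ is the pushforward along $\rf$ of the evaluation $V_i\otimes\cO_{\PP^1}(-1)\to\cO_{\PP^1}$ coming from $V_i\subset C=\Ho(\cO_{\PP^1}(1))$, since each quiver arrow $c\in C$ acts on $\rPi_*\mQ_{\bone}$ as multiplication by the corresponding global section; its cokernel on $\PP^1$ is the skyscraper $\cO_{\rv_i}$ at the point cut out by $V_i$. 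Because $\rf$ is finite, $\rf_*$ is exact, so the short exact sequence $0\to V_i\otimes\cO_{\PP^1}(-1)\to\cO_{\PP^1}\to\cO_{\rv_i}\to 0$ pushes forward to an exact one on $\rY$; hence the two-term subcomplex $V_i\otimes\rf_*\cO_{\PP^1}(-1)\to\rf_*\cO_{\PP^1}$ is quasi-isomorphic to $\rf_*\cO_{\rv_i}\cong\cO_{\rs_i}$. Relation r1) $\beta_iv_i=0$ in $\dD_{\mathfrak F}$ forces $\overline{\beta}_i\circ\overline{\eta}_i=0$, so $\overline{\beta}_i$ factors through this cokernel as an induced map $\overline{\gamma}_i:\cO_{\rs_i}\to\cO_{\rs_i}$, and $\rPi_*\mK_i$ becomes quasi-isomorphic to a shift of the cone of $\overline{\gamma}_i$.

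The hard part will be to verify that $\overline{\gamma}_i$ is a nonzero scalar, hence an isomorphism, after which $\rPi_*\mK_i\simeq 0$ as desired. My plan here is to use the adjunction $\rPi^*\dashv\rPi_*$ to identify $\Hom_{\rY}(\rf_*\cO_{\PP^1},\cO_{\rs_i})$ with $\Hom_{\prf\dD_{\mathfrak F}}(\mQ_{\btwo},\mQ_{\mathbf l_i})=\kk\beta_i\oplus\kk\phi_i$, in which $\beta_i$ and $\phi_i$ correspond geometrically to the two evaluation maps $\rf_*\cO_{\PP^1}\to\cO_{\rs_i}$ at the branches $\rv_i,\rw_i$ of the node $\rs_i$. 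Under this identification $\overline{\beta}_i$ is the pushforward of the canonical surjection $\cO_{\PP^1}\twoheadrightarrow\cO_{\rv_i}$, so by naturality its induced map on the cokernel of $\overline{\eta}_i$ is precisely the chosen isomorphism $\rf_*\cO_{\rv_i}\cong\cO_{\rs_i}$, i.e.\ a nonzero scalar. This will give $\rPi_*\mK_i\simeq 0$ for every $i$ and complete the proof.
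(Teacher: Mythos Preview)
Your approach is exactly the paper's: use the adjunction $\rPi^*\dashv\rPi_*$ to reduce to showing $\rPi_*\mK_i\cong 0$, then compute $\rPi_*\mK_i$ as the three-term complex $\rf_*\cO_{\PP^1}(-1)\to\rf_*\cO_{\PP^1}\to\cO_{\rs_i}$ and observe it is acyclic. The paper dispatches the last step in one clause; you spell out why, which is fine.

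One small wording issue: in your final paragraph, the identification of $\Hom_{\rY}(\rf_*\cO_{\PP^1},\cO_{\rs_i})$ with $\Hom_{\prf\dD_{\mathfrak F}}(\mQ_{\btwo},\mQ_{\mathbf l_i})$ is not a consequence of the adjunction $\rPi^*\dashv\rPi_*$ (adjunction relates $\Hom_{\rY}(X,\rPi_*(-))$ to $\Hom(\rPi^*X,-)$, not $\Hom_{\rY}(\rPi_*(-),\rPi_*(-))$ to $\Hom(-,-)$). What you actually need is just that $\rPi_*$ sends $\beta_i$ to the evaluation-at-$\rv_i$ map; this follows directly from the gluing description of $\prfdg\dD_{\mathfrak F}$ together with the relation $\beta_i v_i=0$, which forces $\beta_i$ to correspond to the projection onto the $\rv_i$-component of $\rp^*\cO_{\rs_i}$. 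Once that is known, the whole three-term complex on $\rY$ is $\rf_*$ of the standard exact sequence $0\to\cO_{\PP^1}(-1)\stackrel{v_i}{\to}\cO_{\PP^1}\to\cO_{\rv_i}\to 0$, and acyclicity is immediate since $\rf$ is finite.
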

\begin{proof} It follows from the fact that $\rPi_*\mK_i\cong 0$ for any $i.$ Indeed, the objects $\mK_i$ are defined as
\[
\Tot\left(\mQ_{\bone}\stackrel{v_i}{\lto} \mQ_{\btwo}\stackrel{\beta_i}{\lto} \mQ_{\mathbf l_i}\right).
\]
Therefore, the objects $\rPi_*\mK_i\in \D^b(\coh \rY)$ are isomorphic to the complexes
$\rf_*\cO_{\PP^1}(-1){\to} \rf_*\cO_{\PP^1}{\to} \cO_{\rs_i},$ which are acyclic.
\end{proof}
Propositions \ref{resol}, \ref{ortog}, and Corollary \ref{imbedR} imply that the category $\prf\dR_{\mathfrak{F}}(\chi_0),$ where $\chi_0:\ZZ^{m+1}\to\ZZ$ is such that
$\chi(\varepsilon_0)=0$ and $\chi(\varepsilon_i)=1$ for every $i=1,\dots, m$ yields  a noncommutative smooth resolution of the singular curve $\rY$ with the Grothendieck group
$K_0(\prf\dR_{\mathfrak{F}}(\chi_0))=\ZZ^2.$

\begin{theorem}\label{nodres}
The DG functor $\mPi^*: \prfdg\rY\to\prfdg\dD_{\mathfrak{F}}$ induces a fully faithful functor
$\prf\rY\hookrightarrow\prf\dR_{\mathfrak{F}}(\chi_0),$ where $\chi_0:\ZZ^{m+1}\to\ZZ$ is such a homomorphism that
$\chi(\varepsilon_0)=0$ and $\chi(\varepsilon_i)=1$ for every $i=1,\dots, m.$
\end{theorem}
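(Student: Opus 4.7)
The plan is to assemble the statement from pieces already in place. The functor $\mPi^*$ has already been constructed and Proposition \ref{resol} establishes that the induced triangulated functor $\rPi^*:\prf\rY\to\prf\dD_{\mathfrak{F}}$ is fully faithful. The remaining task is therefore to identify the essential image of $\rPi^*$ as a subcategory of $\prf\dD_{\mathfrak{F}}$ that can be matched with $\prf\dR_{\mathfrak{F}}(\chi_0)$ under the equivalence of Theorem \ref{imbedR}.

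First I would record the matching of parameters. In the case considered here we have $n=2$, all $V_i$ are one-dimensional and we have set $\delta_i=0$ for every $i=1,\dots,m$. Under the identification of Proposition \ref{quasi-is}, the grading $\chi$ attached to $\overline{\delta}$ is given by $\chi(\varepsilon_0)=0$ and $\chi(\varepsilon_i)=1-\delta_i$, so our choice $\delta_i=0$ corresponds precisely to the grading $\chi_0$ appearing in the statement. Therefore Theorem \ref{imbedR} provides an equivalence of triangulated categories
\[
\prf\dR_{\mathfrak{F}}(\chi_0)\;\xrightarrow{\;\sim\;}\;{}^{\perp}\cS\;\subset\;\prf\dD_{\mathfrak{F}},
\]
where $\cS$ is generated by the completely orthogonal exceptional collection $\{\mK_1,\dots,\mK_m\}$.

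Next I would invoke Proposition \ref{ortog}, which asserts that the essential image of $\rPi^*$ is contained in ${}^{\perp}\cS$; this is the key geometric input and uses the explicit computation $\rPi_*\mK_i\cong 0$ arising from the two-term resolution of the skyscraper $\cO_{\rs_i}$ by $\rf_*\cO_{\PP^1}(-1)\to \rf_*\cO_{\PP^1}$. Composing the corestricted functor $\rPi^*:\prf\rY\to {}^{\perp}\cS$ with the inverse of the equivalence from Theorem \ref{imbedR} then yields the desired fully faithful functor $\prf\rY\hookrightarrow\prf\dR_{\mathfrak{F}}(\chi_0)$. Fully faithfulness is preserved at both steps: at the first step by Proposition \ref{resol}, at the second step because an equivalence is trivially fully faithful.

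There is essentially no obstacle beyond careful bookkeeping: all the genuine work has been done in Propositions \ref{resol} and \ref{ortog} and in Theorem \ref{imbedR}. The only point that requires vigilance is checking that the DG-level construction is compatible with the equivalence of Theorem \ref{imbedR}, i.e.\ that the corestriction of $\mPi^*$ to the DG subcategory quasi-equivalent to $\prfdg\dR_{\mathfrak{F}}(\chi_0)$ is well defined at the DG level and not merely up to a homotopy functor. This is straightforward because the equivalence of Theorem \ref{imbedR} comes from the explicit quasi-isomorphism $\widebar{\xi}$ of Proposition \ref{quasi-is} between $\dR_{\mathfrak{F}}(\chi_0)$ and $\dE_{\mathfrak{F}}(\overline{0})=\dEnd_{\dD_{\mathfrak{F}}}(\mP_1\oplus\mP_2)$, so passing to $\prfdg\dR_{\mathfrak{F}}(\chi_0)$ induces a quasi-equivalence on the admissible DG subcategory ${}^{\perp}\cS$, and $\mPi^*$ factors through it.
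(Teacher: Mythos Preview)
Your proposal is correct and follows precisely the paper's own approach: the paper states the theorem immediately after noting that it is a direct consequence of Propositions \ref{resol} and \ref{ortog} together with Theorem \ref{imbedR}, and you have assembled exactly these three ingredients with the parameter matching $\delta_i=0\leftrightarrow\chi_0$ coming from Proposition \ref{quasi-is}. Your extra paragraph on DG-level compatibility is a reasonable piece of bookkeeping but goes slightly beyond what the paper records.
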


\begin{remark}
{\rm Such a minimal noncommutative resolution of a rational nodal curve can be generalized on the case of a nodal singular curve $\rY$ of arbitrary geometric genus $g.$
Let $\rf: \wt\rY\to \rY$ be a  usual resolution, where $\wt\rY$ is a smooth curve of genus $g.$ We can consider the commutative diagram (\ref{fiber}) with $\wt\rY$ instead $\PP^1$ and
take the gluing $\prfdg\wt\rY\underset{\rT}{\oright}\prfdg\rS$ of the DG categories $\prfdg\wt\rY$ and $\prfdg\rS$ via  the DG bimodule
defined by the stricture sheaf of the scheme $\rT.$ In the same way  we can define the sequence of objects $\{\mK_1,\dots, \mK_m\}$ in the category $\prfdg\wt\rY\underset{\rT}{\oright}\prfdg\rS.$ Using Theorem 1 from \cite{Or17} or Theorem 3.22 from \cite{Or18}, it is not so difficult to show that this sequence forms  an exceptional collection in the triangulated category of the gluing and  the left orthogonal
${}^{\perp}\cS$ to the subcategory  $\cS,$  generated by the objects $\mK_1,\dots, \mK_m,$ provides a minimal smooth noncommutative resolution of the singular curve $\rY.$
That is, we obtain a full embedding of the category $\prf\rY$ to the smooth proper category ${}^{\perp}\cS.$ The triangulated category ${}^{\perp}\cS$ is called a Krull--Schmidt partner to the triangulated category $\prf\wt\rY$ (see \cite{Or17, Or18}).
In particular, the whole K-theory of the triangulated category ${}^{\perp}\cS$ is isomorphic to the whole K-theory of the triangulated category $\prf\wt\rY.$
}
\end{remark}

\subsection{General case of equidimensional families with n=2}
Let us consider a general case of an equidimensional family $\mathfrak{F}$ with $n=2$ and $k=1.$ Thus, some subspaces among $2m$ subspaces $V_i,\, W_j$ may coincide.
However, we still assume that the property (G) holds, i.e. $V_i\ne W_j$ when $i\ge j.$ As above, we denote by $\rT$ and $\rS$ the disjoint union of $2m$ and $m$ points, respectively, and denote by $\rp:\rT\to \rS$ the natural double covering. The closed points $\rv_i, \rw_j\in \PP^1, \; 1\le i,j\le m,$  on the projective line $\PP^1$ define a morphism of schemes  $\rj:\rT\to \PP^1$ that is not necessarily  a closed embedding now.
Denote by $\rY$ the fiber coproduct $\PP^1\coprod_{\rT} \rS.$ It is exists as a singular curve and can be described as follows. Take an affine line $\AA^1\subset\PP^1$ which contains all points $\rv_i, \rw_j\in \PP^1.$
The coproduct $\AA^1\coprod_{\rT} \rS$ is an affine curve $\Spec B,$ where the algebra $B$ is the kernel of the map
\begin{equation}\label{lambda}
\xymatrix@1{
\lambda: \kk[z]\times \underbrace{\kk\times\dots\times\kk}_{m} \ar[rr]^(.58){(\rj^*, -\rp^*)} && \relax\underbrace{\kk\times\dots\times\kk}_{2m}
}
\end{equation}
 induced by
$\rj$ and $\rp.$ The curve $\rY$ is the natural one-point compactification of $\Spec B.$

With any such equidimensional family $\mathfrak{F}$ with $n=2,\, k=1$ we can associate an oriented graph $\varGamma_{\mathfrak{F}}.$ The vertices of $\varGamma_{\mathfrak{F}}$ are all one-dimensional subspaces of $C$ which appear as elements of the family $\mathfrak{F}.$ The graph $\varGamma_{\mathfrak{F}}$ has exactly $m$ arrows: namely, for one arrow
 from $W_i$ to $V_i$ for each $i=1,\dots, m.$ Thus, the connected components of the graph $\varGamma_{\mathfrak{F}}$ one-to-one correspond to the singular points of the curve $\rY,$ while the vertices of $\varGamma_{\mathfrak{F}}$ one-to-one correspond to the points on $\PP^1$ which form the preimage of the singular points of $\rY.$

\begin{definition}
An equidimensional family $\mathfrak{F}$ with $n=2,\; k=1$ satisfying property (G) will be called {\sf modest} if the graph $\varGamma_{\mathfrak{F}}$ is a forest, i.e. it does not have nonoriented cycles.
\end{definition}
\begin{lemma}
A family ${\mathfrak F}$ is modest if and only if the map $\lambda$ is surjective.
\end{lemma}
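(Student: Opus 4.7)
The plan is to reduce the statement to a combinatorial dimension count on the graph $\varGamma_{\mathfrak{F}}$. Since $\rp^*$ adds no coordinate with coefficient involving the polynomial part, the image of $\lambda$ is simply $\operatorname{Im}(\rj^*)+\operatorname{Im}(\rp^*)\subseteq\kk^{2m}$, and I will show that its codimension equals the first Betti number $\beta_1(\varGamma_{\mathfrak{F}})$.

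First I would identify $\rT$, as a set of $2m$ labelled points, with the set of half-edges of $\varGamma_{\mathfrak{F}}$: the half-edge attached to the tail of arrow $i$ corresponds to the point $\rw_i$ (whose image under $\rj$ is the vertex $W_i\in\PP^1$), and the half-edge at the head corresponds to $\rv_i$ (mapping to $V_i$). Under this identification, the fibers of $\rj:\rT\to\PP^1$ are exactly the sets of half-edges incident to a common vertex, while the fibers of $\rp:\rT\to\rS$ are the two half-edges of a common arrow. Consequently $\operatorname{Im}(\rj^*)$ is the subspace $V_\rj\subseteq\kk^{2m}$ of tuples constant on each vertex-fiber (the relevant surjectivity coming from Lagrange interpolation at the distinct points of $\rj(\rT)\subset\AA^1$), and $\operatorname{Im}(\rp^*)$ is the subspace $V_\rp$ of tuples constant on each arrow-fiber.

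Next I compute the three relevant dimensions. Clearly $\dim V_\rj=|V(\varGamma_{\mathfrak{F}})|$ and $\dim V_\rp=m$, the latter because $\rp^*$ is manifestly injective. For the intersection, note that a tuple lies in $V_\rj\cap V_\rp$ iff it is constant on each class of the equivalence relation generated by the two partitions; this joint equivalence relation identifies two half-edges iff they lie in the same connected component of $\varGamma_{\mathfrak{F}}$. Hence $\dim(V_\rj\cap V_\rp)=c(\varGamma_{\mathfrak{F}})$, the number of connected components. By inclusion-exclusion,
\[
\dim\operatorname{Im}(\lambda)=|V(\varGamma_{\mathfrak{F}})|+m-c(\varGamma_{\mathfrak{F}}).
\]

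Finally I apply Euler's formula for a graph, $|V|-|E|=c-\beta_1$, with $|E|=m$, to rewrite the right-hand side as $2m-\beta_1(\varGamma_{\mathfrak{F}})$. Therefore $\lambda$ is surjective if and only if $\beta_1(\varGamma_{\mathfrak{F}})=0$, i.e.\ iff $\varGamma_{\mathfrak{F}}$ contains no (unoriented) cycle, i.e.\ iff $\mathfrak{F}$ is modest. No real obstacle arises; the only point requiring a moment of care is the description of the joint equivalence relation in terms of connected components of $\varGamma_{\mathfrak{F}}$, which is immediate from the half-edge picture.
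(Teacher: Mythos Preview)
Your proof is correct and follows essentially the same approach as the paper, which simply asserts that surjectivity of $\lambda$ is equivalent to $H^1(\varGamma_{\mathfrak{F}})=0$ and hence to $\varGamma_{\mathfrak{F}}$ being a forest. You supply the details the paper omits: the half-edge description of $\rT$, the identification of $\operatorname{Im}(\rj^*)$ and $\operatorname{Im}(\rp^*)$ with the vertex- and edge-constant subspaces, the inclusion--exclusion count, and the use of Euler's formula to pin down the codimension as $\beta_1(\varGamma_{\mathfrak{F}})$.
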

\begin{proof} It is easy to see that the map $\lambda$ is surjective iff the first cohomology group $H^1(\varGamma_{\mathfrak{F}})$ of the graph $\varGamma_{\mathfrak{F}}$ is trivial. This holds
if and only if $\varGamma_{\mathfrak{F}}$ is forest.
\end{proof}
\begin{remark}\label{remshort}
{\rm
Note that the surjectivity of the  map $\lambda$  is equivalent to the condition that the natural sequence of coherent sheaves on $\rY$
\[
\cO_{\rY}\lto \rf_*\cO_{\PP^1}\oplus \ri_*\cO_{\rS}\lto\rf_*\rj_*\cO_{\rT}
\]
is actually a short exact sequence.
 }
 \end{remark}

Now, we can consider the commutative diagram (\ref{fiber}) as before, despite the fact that  the morphisms $\rj$ and $\ri$ are no longer necessarily closed embeddings, and the diagram is not necessarily cartesian.
However, as above, the pullback DG functors
\[
\mf^*: \prfdg \rY\lto \prfdg \PP^1,\quad\text{and}\quad \mi^*:\prfdg \rY\lto \prfdg\rS
\]
define a DG functor
\[
\mPi^*: \prfdg\rY\lto\prfdg\dD_{\mathfrak{F}},
\]
which sends an object $\mE\in\prfdg\rY$ to the object $\Cone (\mPhi\mf^*\mE\stackrel{\zeta_{\mE}}{\lto} \mPsi\mi^*\mE)[-1],$
where $\zeta_{E}$ is the image of $\id_{\mt^* \mE}\in \dHom_{\prfdg\rT}(\mj^*\mf^*\mE, \mp^*\mi^*\mE)$ under the map $\zeta$ from (\ref{iota}).
The DG functor $\mPi$ induces the homotopy functor
$
{\rPi}^*:\prf\rY\to\prf\dD_{\mathfrak{F}}
$
between the triangulated categories.

Using the surjectivity of the map $\lambda$ (\ref{lambda}) in the form as in Remark \ref{remshort} and repeating the proofs of Propositions \ref{resol} and \ref{ortog}, we can obtain the following results.

\begin{proposition}\label{resol2}
For any modest family ${\mathfrak F}$  the functor $
{\rPi}^*:\prf\rY\to\prf\dD_{\mathfrak{F}}
$
is fully faithful and  the image of $
{\mathrm\Pi}^*
$
is contained in the left orthogonal
${}^{\perp}\cS,$ where $\cS\subset\prf\dD_{\mathfrak{F}}$ is the full triangulated subcategory generated by the exceptional collection $\{\mK_1,\dots, \mK_m\}.$
\end{proposition}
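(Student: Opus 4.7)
The plan is to follow the same strategy as in the proofs of Propositions \ref{resol} and \ref{ortog}, tracking where the nodal-case hypotheses (cartesian and cocartesian property of diagram (\ref{fiber})) were actually used and replacing them with the modesty condition.

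For full faithfulness, I would start with two objects $\mE_1,\mE_2\in\prfdg\rY$ and compute $\dHom_{\dD_{\mathfrak F}}(\mPi^*\mE_1,\mPi^*\mE_2)$ using that $\mPi^*\mE_i$ is by construction the cone shifted by $[-1]$ of $\zeta_{\mE_i}:\mPhi\mf^*\mE_i\to\mPsi\mi^*\mE_i$. Since $\mPhi$ and $\mPsi$ are homotopy fully faithful and the gluing bimodule is defined by $\cO_{\rT}$, the Hom-complex is naturally quasi-isomorphic to
\[
\Cone\Bigl(\dHom_{\prfdg\PP^1}(\mf^*\mE_1,\mf^*\mE_2)\oplus \dHom_{\prfdg\rS}(\mi^*\mE_1,\mi^*\mE_2) \lto \dHom_{\prfdg\rT}(\mj^*\mf^*\mE_1,\mp^*\mi^*\mE_2)\Bigr)[-1],
\]
which in turn computes $\Hom$'s from $\mE_1$ to the object
\[
M=\Cone\bigl(\rf_*\rf^*\mE_2\oplus\ri_*\ri^*\mE_2\lto\rf_*\rj_*\rp^*\ri^*\mE_2\bigr)[-1]
\]
in $\D^b(\coh\rY)$, by adjunction for $\rf,\ri,\rj,\rp$.

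The crucial step is then to identify $M$ with $\mE_2$. This reduces, by the projection formula, to exhibiting a natural isomorphism $\mE_2\stackrel{\sim}{\to}M$ obtained by tensoring $\mE_2$ with the sequence $\cO_{\rY}\to\rf_*\cO_{\PP^1}\oplus\ri_*\cO_{\rS}\to\rf_*\rj_*\cO_{\rT}$; for this to give an exact triangle (and not merely a complex), the sequence must be short exact. This is exactly where modesty enters: by Remark \ref{remshort} the short exact sequence holds if and only if the map $\lambda$ of (\ref{lambda}) is surjective, and the Lemma identifies this with the condition that $\varGamma_{\mathfrak F}$ is a forest. In the nodal case this was automatic because $\rj$ was a closed embedding and the diagram was cocartesian, but in the general modest case one has to invoke the combinatorial criterion. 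This is the main (and really only substantive) obstacle.

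For the orthogonality part, the argument carries over verbatim: it suffices to show that $\rPi_*\mK_i\cong 0$ for each $i$. The right adjoint $\rPi_*$ sends $\mQ_{\bone},\mQ_{\btwo},\mQ_{\mathbf l_i}$ to $\rf_*\cO_{\PP^1}(-1),\rf_*\cO_{\PP^1},\cO_{\rs_i}$ respectively, and the definition of $\mK_i$ as the totalization of $\mQ_{\bone}\stackrel{v_i}{\to}\mQ_{\btwo}\stackrel{\beta_i}{\to}\mQ_{\mathbf l_i}$ yields the complex $\rf_*\cO_{\PP^1}(-1)\to\rf_*\cO_{\PP^1}\to\cO_{\rs_i}$ on $\rY$, which is acyclic exactly as in the proof of Proposition \ref{ortog}; the modesty assumption plays no role here since $\mK_i$ only sees a single pair $(V_i,W_i)$ and the acyclicity is checked locally at the point $\rs_i$. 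Combining the two parts gives the proposition.
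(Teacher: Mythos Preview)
Your proposal is correct and follows exactly the approach the paper indicates: the paper's proof is simply the sentence ``Using the surjectivity of the map $\lambda$ (\ref{lambda}) in the form as in Remark \ref{remshort} and repeating the proofs of Propositions \ref{resol} and \ref{ortog}, we can obtain the following results,'' and you have accurately unpacked what this means, correctly isolating the short exact sequence of Remark \ref{remshort} as the one place where modesty is needed. Your observation that the orthogonality part goes through without any use of modesty is also right.
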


\begin{theorem}\label{genres}
For any modest family ${\mathfrak F}$  the DG functor $\mPi^*: \prfdg\rY\to\prfdg\dD_{\mathfrak{F}}$ induces a fully faithful functor
$\prf\rY\hookrightarrow\prf\dR_{\mathfrak{F}}(\chi_0),$ where $\chi_0:\ZZ^{m+1}\to\ZZ$ is such a homomorphism that
$\chi(\varepsilon_0)=0$ and $\chi(\varepsilon_i)=1$ for every $i=1,\dots, m.$
\end{theorem}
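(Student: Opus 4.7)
The plan is direct: the theorem follows by composing the conclusion of Proposition \ref{resol2} with the equivalence supplied by Theorem \ref{imbedR}, specialized to the present setting ($n=2$, $k=1$, $\overline{\delta}=(0,\dots,0)$). I would begin by invoking Proposition \ref{resol2} to obtain the fully faithful functor $\rPi^*:\prf\rY\hookrightarrow\prf\dD_{\mathfrak F}$ whose essential image lies in ${}^\perp\cS$, where $\cS\subset\prf\dD_{\mathfrak F}$ is generated by the exceptional collection $\{\mK_1,\dots,\mK_m\}$. This is the nontrivial geometric ingredient, and it uses in an essential way the short exact sequence of sheaves on $\rY$ from Remark \ref{remshort} (which is equivalent to the family being modest, i.e. the map $\lambda$ of \eqref{lambda} being surjective).

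Next, I would invoke Theorem \ref{imbedR} with $\delta_i=0$ for all $i$, which produces a fully faithful embedding $\prf\dR_{\mathfrak F}(\chi)\hookrightarrow\prf\dD_{\mathfrak F}$ establishing an equivalence between $\prf\dR_{\mathfrak F}(\chi)$ and the same left orthogonal ${}^\perp\cS$. According to Proposition \ref{quasi-is}, the grading associated to $\overline{\delta}=(0,\dots,0)$ is precisely $\chi(\varepsilon_0)=0$, $\chi(\varepsilon_i)=1-0=1$, which matches the grading $\chi_0$ in the theorem statement. Composing the equivalence $\prf\dR_{\mathfrak F}(\chi_0)\simeq{}^\perp\cS$ (read in reverse) with $\rPi^*$ lands $\prf\rY$ inside $\prf\dR_{\mathfrak F}(\chi_0)$ via a fully faithful functor.

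The only step requiring care is checking that the functor obtained in this way is really induced by the DG functor $\mPi^*$ rather than being merely some fully faithful functor from $\prf\rY$. This is a routine verification at the DG level: the equivalence of Theorem \ref{imbedR} is implemented by the DG functor $\dHom_{\dD_{\mathfrak F}}(\mP_1\oplus\mP_2,-)$ to $\prfdg\dE_{\mathfrak F}(\overline{0})$ (composed with the quasi-isomorphism $\widebar{\xi}$ of Proposition \ref{quasi-is}), and applying this DG functor to the cones $\mPi^*\mE=\Cone(\mPhi\mf^*\mE\to\mPsi\mi^*\mE)[-1]$ gives a bona fide DG lift $\prfdg\rY\to\prfdg\dR_{\mathfrak F}(\chi_0)$ of the resulting triangulated embedding.

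The main (already-completed) obstacle is Proposition \ref{resol2} itself: once one knows that the gluing description of $\prfdg\dD_{\mathfrak F}$ via $\prfdg\PP^1$, $\prfdg\rS$ and the bimodule $\cO_{\rT}$ continues to hold when $\rj$ is no longer a closed embedding, the proof of Proposition \ref{resol} goes through verbatim provided the canonical triangle $\cO_{\rY}\to\rf_*\cO_{\PP^1}\oplus\ri_*\cO_{\rS}\to\rf_*\rj_*\cO_{\rT}$ is exact — which is exactly the modesty hypothesis. Granting that, Theorem \ref{genres} is essentially a packaging statement with no further content beyond matching the grading $\chi_0$ with $\delta_i=0$.
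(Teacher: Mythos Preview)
Your proposal is correct and follows exactly the paper's approach: the paper does not give a separate proof of Theorem \ref{genres}, treating it as an immediate consequence of Proposition \ref{resol2} together with Theorem \ref{imbedR} (specialized to $\overline{\delta}=0$, so that $\chi=\chi_0$), just as you describe. Your additional remark about the DG-level lift is more detail than the paper supplies, but is consistent with its implicit reasoning.
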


\begin{remark}
{\rm
Let us take an arbitrary equidimensional family $\mathfrak{F}$ with $n=2$ and $k=1.$ Consider the singular curve $\rY_{\mathfrak{F}}$ that is obtained as a coproduct $\PP^1\coprod_{\rT} \rS,$ where $\rT$ and $\rS$ are the disjoint union of $2m$ and $m$ points as above. If the family $\mathfrak{F}$ is not modest, the map $\lambda$ is not surjective, and we do not have a fully faithful functor
from $\prf\rY_{\mathfrak{F}}$ to $\prf\dD_{\mathfrak{F}}.$ On the other hand, in this case we can reduce the family $\mathfrak{F}$ to a smaller subfamily $\mathfrak{G}\subset \mathfrak{F},$ which is modest
and for which the curve $\rY_{\mathfrak{G}}$ coincides  with the curve $\rY_{\mathfrak{F}}.$
A subfamily $\mathfrak{G}$ can be obtained from $\mathfrak{F}$ by removing some pairs $(V_i, W_i)$ such that the resulting graph $\varGamma_{\mathfrak{G}}$ is a full (or maximal) spanning forest for $\varGamma_{\mathfrak{F}}.$
Obviously, this can always be done. Thus, for an arbitrary equidimensional family $\mathfrak{F}$ with $n=2$ and $k=1,$ the resulting curve $\rY_{\mathfrak{F}}$ has a noncommutative smooth resolution
$\prf\rY_{\mathfrak{F}}\hookrightarrow\prf\dR_{\mathfrak{G}}(\chi_0),$ where $\mathfrak{G}$ is a some subfamily of the family $\mathfrak{F}.$
}
\end{remark}

\subsection{Dimension of triangulated categories in geometric case}
Let us now calculate dimensions of the triangulated categories $\prf\dD_{\mathfrak{F}}(\overline{\delta})$ and $\prf\dR_{\mathfrak{F}}(\chi)$ in the case  of an equidimensional family $\mathfrak{F}$ with $n=2$ and $k=1.$ Using the methods of \cite[Sec. 6]{El}, one can show that the dimensions of the triangulated categories $\prf\dD_{\mathfrak{F}}(\overline{\delta})$ and $\prf\dR_{\mathfrak{F}}(\chi)$
are both equal to $1.$

\begin{theorem}\label{dimcat}
Suppose that the family $\mathfrak{F}$ is equidimensional with $n=2$ and $k=1.$ Then the dimensions of the triangulated categories $\prf\dD_{\mathfrak{F}}(\overline{\delta})$ and $\prf\dR_{\mathfrak{F}}(\chi)$
are both equal to $1.$
\end{theorem}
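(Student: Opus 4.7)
The plan splits into a lower and an upper bound, the latter using the admissible embedding from Theorem~\ref{imbedR}.

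\medskip

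\textbf{Lower bound.} Both categories contain the copy of $\prf\PP^1$ generated by $\mQ_{\bone}$ and $\mQ_{\btwo}$ (inside $\prf\dR_{\mathfrak{F}}(\chi)$ this subcategory arises from the Kronecker subalgebra $K(C)\subseteq\dR_{\mathfrak{F}}(\chi)$). Since $\prf\PP^1$ admits nonsplit triangles, and since any triangulated category of dimension $0$ with a classical generator and finite-dimensional Hom-spaces over $\kk$ is (after Karoubi closure) a finite product of copies of $\prf\kk$ and hence semisimple, neither of our categories can have dimension $0$. This forces $\dim\geq 1$ in both cases.

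\medskip

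\textbf{Reduction of the upper bound.} The embedding $\prf\dR_{\mathfrak{F}}(\chi)\hookrightarrow\prf\dD_{\mathfrak{F}}(\overline{\delta})$ of Theorem~\ref{imbedR} is admissible, its image being the left orthogonal ${}^{\perp}\cS$. If $G$ is any classical generator of $\prf\dD_{\mathfrak{F}}(\overline{\delta})$ with $\langle G\rangle_2=\prf\dD_{\mathfrak{F}}(\overline{\delta})$, then its image under the right adjoint projection is a classical generator of $\prf\dR_{\mathfrak{F}}(\chi)$ realising the same bound, so $\dim\prf\dR_{\mathfrak{F}}(\chi)\leq\dim\prf\dD_{\mathfrak{F}}(\overline{\delta})$. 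It therefore suffices to prove $\dim\prf\dD_{\mathfrak{F}}(\overline{\delta})\leq 1$.

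\medskip

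\textbf{Upper bound on $\dim\prf\dD_{\mathfrak{F}}(\overline{\delta})$.} I would use the DG gluing description recorded before Theorem~\ref{nodres}: the DG category $\prfdg\dD_{\mathfrak{F}}(\overline{\delta})$ is quasi-equivalent to a gluing of $\prfdg\PP^1$ and $\prfdg\rS$ along the DG bimodule induced by the structure sheaf of the zero-dimensional subscheme $\rT$ via $\rj_*$ and $\rp_*$. Since $\dim\prf\PP^1=1$ and $\dim\prf\rS=0$, the natural candidate generator is
\[
G=\mPhi(\cO_{\PP^1}(-1)\oplus\cO_{\PP^1})\oplus\mPsi(\cO_{\rS})\cong\mQ_{\bone}\oplus\mQ_{\btwo}\oplus\bigoplus_{i=1}^m\mQ_{\mathbf l_i}[\delta_i].
\]
Given $X\in\prf\dD_{\mathfrak{F}}(\overline{\delta})$, the gluing yields the canonical triangle $\mPhi A\to X\to\mPsi B$ with $A\in\prf\PP^1$ and $B\in\prf\rS$. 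Choose a two-term Beilinson-type resolution $A_{-1}\to A_0$ of $A$ by direct sums of $\cO_{\PP^1}(-1)$ and $\cO_{\PP^1}$, and observe that $B\in\langle\cO_{\rS}\rangle_1$. Following the methods of \cite[Sec.~6]{El}, the key observation is that the gluing bimodule $\cO_\rT$ is concentrated in cohomological degree zero, so the connecting morphism $\mPhi A\to\mPsi B[1]$ factors up to homotopy through the length-one resolution of $\mPhi A$. An octahedral manipulation then collapses the three steps into a single exact triangle $G_0\to X\to G_1$ with $G_0,G_1\in\langle G\rangle_1$, exhibiting $X\in\langle G\rangle_1*\langle G\rangle_1=\langle G\rangle_2$.

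\medskip

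\textbf{Main obstacle.} A naive combination of the $\PP^1$-resolution of $\mPhi A$ with the semi-orthogonal triangle for $X$ yields only $X\in\langle G\rangle_1*\langle G\rangle_2=\langle G\rangle_3$, corresponding to the bound $\dim\leq 2$. Shrinking $\langle G\rangle_3$ to $\langle G\rangle_2$ is precisely where the geometric input enters: one must exploit the zero-dimensionality of $\rS$ (equivalently, the absence of higher cohomology in the gluing bimodule $\cO_\rT$) to merge two of the three steps, in the spirit of \cite[Sec.~6]{El}. This is the technical heart of the argument; once it is in place, combining with the admissible-subcategory reduction and the lower bound completes the proof that both dimensions equal $1$.
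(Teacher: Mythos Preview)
Your outline correctly identifies the lower bound and the reduction via the admissible embedding, and the idea of exploiting the gluing description is the right one. But the heart of the upper bound has a genuine gap, and it is not merely a matter of filling in details: your proposed generator is too small.

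You take $G=\mQ_{\bone}\oplus\mQ_{\btwo}\oplus\bigoplus_i\mQ_{\mathbf l_i}$, i.e.\ the algebra $\dD_{\mathfrak F}$ itself. This cannot generate in two steps. Indeed, any object of $\langle G\rangle_1*\langle G\rangle_1$ is (after shifting and splitting by degree) a direct sum of two--term complexes of projectives; by Krull--Schmidt the same holds for summands. For a two--term complex $[P\to Q][n]$ and any module $M$ one has $\Hom([P\to Q][n],M[k])\ne 0$ only for two consecutive values of $k$. Now the simple $S_{\mathbf l_1}$ satisfies $\Hom(S_{\mathbf l_1},S_{\mathbf l_1})\ne 0$ and $\Ext^2(S_{\mathbf l_1},S_{\bone})\cong\kk^2\ne 0$ (from the minimal resolution $\mQ_{\bone}^2\to\mQ_{\btwo}^2\to\mQ_{\mathbf l_1}$), so $S_{\mathbf l_1}\notin\langle G\rangle_2$. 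Thus your factoring claim (``the connecting morphism \dots\ factors through the length-one resolution'') must fail for this choice of $G$, and no octahedral trick can rescue it.

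What the paper does instead is to \emph{enlarge} the generator on the $\PP^1$ side: one adjoins the images under $\Phi$ of the skyscrapers $\cO_x$ at the finitely many special points $x\in\cX=\Im(\rj)$ together with the line bundles $\cO(-1),\dots,\cO(c)$, where $c=|\cX|$. The crucial construction is then, for every $E\in\prf\PP^1$, an explicit triangle $s(E)\xrightarrow{\varsigma} E\to q(E)$ with $s(E)\in\langle\cO\rangle_1$, with $q(E)$ built out of those finitely many sheaves, and with the factoring property that \emph{every} morphism $E\to\rj_*\cO_{\rT}[l]$ kills $s(E)$. Only then does the gluing map $E\to F$ factor through $q(E)$, and the octahedron collapses $\Cone(E\to F)$ to a single cone between objects in $\langle\K\rangle_1$. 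The zero--dimensionality of $\rT$ enters exactly here, but through this specific construction rather than a general vanishing argument. (A minor point: in your semiorthogonal triangle the components are in the wrong order; since $\Hom({}^{\perp}\cQ,\cQ)=0$, the canonical triangle is $\mPsi B\to X\to\mPhi A$.)
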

\begin{proof}
The dimension of the admissible subcategory $\prf\dR_{\mathfrak{F}}(\chi)\subset \prf\dD_{\mathfrak{F}}(\overline{\delta})$ does not exceed
the dimension of the whole category $\prf\dD_{\mathfrak{F}}(\overline{\delta})$ and, besides, it can not be $0.$ Hence, it is sufficient to prove that the dimension
of the triangulated category $\prf\dD_{\mathfrak{F}}(\overline{\delta})$ is equal to $1.$

To simplify notation and reasoning, we consider the most interesting case when $\overline{\delta}\equiv 0.$

Let us take the full admissible triangulated subcategory $\cQ\subset\prf\dD_{\mathfrak{F}}$ that is
generated by the projective modules $\mQ_{\bone}$ and $\mQ_{\btwo}.$
The triangulated subcategory $\cQ$ is equivalent to the category $\prf\PP^1=\D^b(\coh \PP^1).$
Denote by ${\mathrm\Phi}:\prf\PP^1\to \prf\dD_{\mathfrak{F}}$ the fully faithful functor which sends $\cO_{\PP^1}(-1)$ and $\cO_{\PP^1}$ to
$\mQ_{\bone}$ and $\mQ_{\btwo},$ respectively.
The left orthogonal ${}^{\perp}\cQ$ is generated by the projective $\dD_{\mathfrak{F}}$\!--modules $\mQ_{\mathbf l_i}, \; i=1,\dots, m$ and it is equivalent to
the triangulated category $\prf\rS,$ where $\rS=\coprod_{i=1}^m \rs_i$ is the disjoint union of $m$ points.

The DG category of perfect DG modules $\prfdg\dD_{\mathfrak{F}}$ is quasi-equivalent to a gluing of the DG categories $\prfdg\PP^1$ and $\prfdg\rS$ with respect to  the DG bimodule
defined by the stricture sheaf of the scheme $\rT$ via the functors $\rj_*$ and $\rp_*,$ where $\rj$ and $\rp$ are the morphisms in commutative square (\ref{fiber}).
This means that for any objects $E\in\cQ\cong\prf\PP^1$ and $F\in{}^{\perp}\cQ\cong\prf\rS,$ we have
\[
\Hom_{\prf\dD_{\mathfrak{F}}}(E,F)\cong\Hom_{\rT}(\rj^*E,\rp^*F)\cong\Hom_{\PP^1}(E, \rj_*\rp^*F).
\]
Any object $\rj_*\rp^*F$ is a direct sum of the objects of the form $\cO_x[l],$ where the closed point $x\in\PP^1$ is in the image of the map $\rj.$
Denote by $\cX$ the finite set of all such closed points and by $c$ the cardinality of the set $\cX.$ Note that $c\le 2m.$

First, we will show that for any object $E\in \prf\PP^1$ there is an exact triangle
\begin{equation}\label{extr}
s(E)\stackrel{\varsigma}{\lto} E\stackrel{\varpi}{\lto} q(E)\stackrel{\vartheta}{\lto} s(E)[1]
\end{equation}
in the triangulated category $\prf\PP^1$ such that
\begin{itemize}
\item[(1)] $s(E)\in \langle \cO_{\PP^1}\rangle;$
\item[(2)] all irreducible summands of the object $q(E)$ belong (up to shift) to the finite set of
coherent sheaves on $\PP^1$ consisting of $\cO_x,\;x\in\cX$ and the line bundles $\{ \cO_{\PP^1}(-1),\ldots, \cO_{\PP^1}(c)\};$
\item[(3)] for any morphism $f:E\to \rj_*\cO_{\rT}[l]$ one has $f\varsigma=0.$
\end{itemize}

It is sufficient to construct such an exact triangle for every indecomposable coherent sheaves on $\PP^1,$ i.e. for each line bundles $\cO(k)$ and any torsion sheaf
$\cO_{ly},$ where $y\in \PP^1$ is a closed point and $l$ is a positive integer.

In the case when $E$ is  a line bundle $\cO_{\PP^1}(-k),\; k\ge 1,$ we put $q(E)=\cO_{\PP^1}(-1)^{k}$ and $s(E)=\cO_{\PP^1}^{k-1}[-1].$ If $E=\cO_{\PP^1}(k),\; k=0,\dots, c,$ we take $s(E)=0,\;q(E)=E.$

Finally, for any $E=\cO_{\PP^1}(k),\;k>c,$ using the exact sequence
\[
0\lto\cO_{\PP^1}(-1)^{k-c}\lto\cO_{\PP^1}^{k+1-c}\lto\cO_{\PP^1}(k)\lto \cO_{\cX}\lto 0,
\]
we put
$s(E)=\cO_{\PP^1}^{k+1-c}$ and $q(E)= \cO_{\cX}\oplus\cO_{\PP^1}(-1)^{k-c}[1].$

In the case $E=\cO_{ly}$ when $y\not\in\cX ,$ we put $s(E)=\cO_{\PP^1}^l,\; q(E)=\cO_{\PP^1}(-1)^l[1].$
If $E=\cO_{lx}$ and $x\in\cX,$ we can consider the following exact sequence
\[
0\lto\cO_{\PP^1}(-1)^{l-1}\lto\cO_{\PP^1}^{l-1}\lto\cO_{lx}\lto \cO_x\lto 0.
\]
Using this exact sequence, we put $s(E)=\cO_{\PP^1}^{l-1}$ and $q(E)= \cO_x\oplus\cO_{\PP^1}(-1)^{l-1}[1].$

Now, we a ready to show that the set $\K=\{ {\mathrm\Phi}\cO_{\PP^1}(-1),\ldots, {\mathrm\Phi}\cO_{\PP^1}(c);\; {\mathrm\Phi}\cO_x,\;x\in\cX;\; \mQ_{\mathbf l_i}, \; i=1,\dots, m\} $
generates the triangulated category $\prf\dD_{\mathfrak{F}}$ at one step.
Any object of the category $\prf\dD_{\mathfrak{F}}$ can be obtained as a cone of a map $f:E\to F,$ where $E\in\cQ\cong\prf\PP^1$ and $F\in {}^{\perp}\cQ.$
Consider exact triangle (\ref{extr}) for the object $E.$ Since $f\varsigma=0,$ the morphism $f$ factors through $q(E)$ as $f'\varpi,$ where $f'$ is a morphism
from $q(E)$ to $F.$ Now the object $\Cone(f)$ is isomorphic to the object  $\Cone(q(E)\stackrel{(\vartheta, f')}{\lto}s(E)[1]\oplus F).$
It remains only to note that all irreducible summands of the objects $q(E),\, s(E),\, F$ belong (up to a shift) to the finite set $\K.$
Therefore, the set $\K$ generates the whole category $\prf\dD_{\mathfrak{F}}$ at one step.
\end{proof}

\begin{conclusion}
{\rm Thus, in the case of an equidimensional family $\mathfrak{F}$   with $n=2$ and $k=1,$ the DG category $\prfdg\dR_{\mathfrak{F}}(\chi)$ and its triangulated category $\prf\dR_{\mathfrak{F}}(\chi)$
can be considered as a smooth proper noncommutative curve, because the category $\prf\dR_{\mathfrak{F}}(\chi)$ has the dimension equal to $1.$
Moreover, for  modest families $\mathfrak{F}$ and  $\chi_0:\ZZ^{m+1}\to\ZZ$ with
$\chi(\varepsilon_0)=0$ and $\chi(\varepsilon_i)=1$ for $i=1,\dots, m,$  the categories $\prf\dR_{\mathfrak{F}}(\chi_0)$  provide smooth noncommutative resolutions for singular rational curves $\rY_{\mathfrak{F}}$
and such noncommutative resolutions are minimal in sense that the whole K-theory of the category $\prf\dR_{\mathfrak{F}}(\chi_0)$ is isomorphic to the direct sum $K_*(\kk)\oplus K_*(\kk)$ of two copies of the K-theory of the field.

}
\end{conclusion}

%\vspace{-0.3cm}


\begin{thebibliography}{CSV}






\bibitem[CSV]{CSV} A.~Cap, H.~Schichl, J.~Van\v{z}ura,
{\em On twisted tensor products of algebras}, Comm. Algebra, {\bf 23} (1995), 12,  4701--4735.


\bibitem[Gr]{Cr} J.~D.~Christensen, {\em Ideals in triangulated categories,} Adv. Math. {\bf 136} (1998), 284--339.

\bibitem[Du]{Du} D.~Dubnov, {\em On basis algebras of finite homological dimension}, Moscow Univ. Math. Bull., {\bf 52} (1997), 2,  14--17.

%\bibitem[Du2]{D2} D.~Dubnov, {\em
%On derived categories of modules over 2-vertex basic algebras},
%Comm. Algebra, {\bf 28} (2000), 9, 4355--4374.


\bibitem[El]{El} A.~Elagin, {\em
Calculating dimension of triangulated categories: path algebras, their tensor powers and orbifold projective lines},
J. Algebra, {\bf 592} (2022), 357--401.



\bibitem[Ga]{Ga} P.~Gabriel, {\em Auslander-Reiten sequences and representation-finite algebras}, Representation
theory I, Ottawa 1979. Lecture Notes in Mathematics {\bf 831}, Springer-Verlag,
Berlin/New Nork 1980.



\bibitem[Gr]{G} E.~L.~Green, {\em Remarks on projective resolutions}, Representation theory II, LNM, {\bf 832} (1980) 259--279.

\bibitem[Ha]{Ha} D.~Happel, {\em
A family of algebras with two simple modules and Fibonacci numbers},
Arch. Math., {\bf 57} (1991), 2, 133--139.

\bibitem[HZ]{HZ} D.~Happel, D.~Zacharia, {\em
Algebras of finite global dimension}, in Algebras, quivers and representations, 95--113,
Abel Symp., {\bf 8}, Springer, Heidelberg, (2013).

\bibitem[Ig]{Ig} K.~Igusa, {\em
Notes on the no loops conjecture},
J. Pure Appl. Algebra, {\bf 69} (1990), 2, 161--176.

\bibitem[Ke]{Ke}B.~Keller, {\em Deriving DG categories,}
Ann. Sci. \'Ecole Norm. Sup. (4), {\bf 27} (1994), 63--102.


\bibitem[KiK]{KK} E.~Kirkman, J.~Kuzmanovich, {\em
Algebras with large homological dimensions},
Proc. Amer. Math. Soc.,  {\bf 109} (1990),  4, 903--906.

\bibitem[KrK]{KrK}
H.~Krause, D.~Kussin,
{\em Rouquier's theorem on representation dimension},
Trends in representation theory of algebras and related topics, Contemp. Math., {\bf 406}, Amer. Math. Soc., Providence, R.I., (2006), 95--103.

\bibitem[KL]{KL} A.~Kuznetsov, V.~Lunts, {\em Categorical resolution of irrational singularities},
IMRN, {\bf 2015}  (2015), 13, 4536--4625.

\bibitem[Lu]{Lu} V.~Lunts,
{\em Categorical resolution of singularities},
J. Algebra, {\bf 323} (2010), 10, 2977--3003.


\bibitem [Me]{MH} F.~H.~Membrillo-Hern\'andez,
{\em Quasi-hereditary algebras with two simple modules and Fibonacci numbers},
Comm. Algebra, {\bf 22} (1994), 11, 4499--4509.

\bibitem[O1]{Or09} D.~Orlov,
{\em Remarks on generators and dimensions of triangulated
              categories}, Mosc. Math. J. {\bf 9} (2009), 153--159.

\bibitem[O2]{Or16} D.~Orlov,
{\em Smooth and proper noncommutative schemes and gluing of DG
categories}, Adv. Math., {\bf 302} (2016), 59--105.

\bibitem[O3]{Or17} D.~Orlov,
{\em Gluing of categories and Krull–Schmidt partners}, Russian Math. Surveys, {\bf 71} (2016), 3, 594--596.

\bibitem[O4]{Or18} D.~Orlov, {\em Derived noncommutative schemes, geometric realizations, and finite-dimensional algebras},
Russian Math. Surveys, {\bf 73} (2018), 5, 865--918.


\bibitem[O5]{Or19} D.~Orlov, {\em Finite-dimensional DG algebras and their properties}, Russian Math. Surveys, {\bf 74} (2019), 4, 764--766.


\bibitem[O6]{Or20} D.~Orlov, {\em Finite-dimensional differential graded algebras and their geometric realizations},
Adv. Math., {\bf 366} (2020), 107096.

\bibitem[O7]{Or21} D.~Orlov, {\em
Twisted tensor products of DG algebras},
Russian Math. Surveys, {\bf 76} (2021), 6, 1146--1148.

\bibitem[O8]{Or23} D.~Orlov, {\em
Smooth DG algebras and twisted tensor product},
Russian Math. Surveys, {\bf 78} (2023), 5, 853--880.

\bibitem[Ro]{Ro} R.~Rouquier,
{\em Dimensions of triangulated categories},
J. K-Theory, {\bf 1} (2008),  2, 193--256.

\end{thebibliography}
\end{document}